\newtheorem{theorem}{Theorem}[section]
\newtheorem{thm}[theorem]{Theorem}
\newtheorem{proposition}[theorem]{Proposition}
\newtheorem{prop}[theorem]{Proposition}
\newtheorem{lm}[theorem]{Lemma}
\newtheorem{corollary}[theorem]{Corollary}
\newtheorem{con}[theorem]{Conjecture}
\theoremstyle{remark}
\newtheorem{remark}[theorem]{Remark}
\theoremstyle{definition}
\newtheorem{de}[theorem]{Definition}
\def\N{\mathbb N}
\def\Z{\mathbb Z}
\def\R{\mathbb R}
\def\C{\mathbb C}
\def\Irr{\mathrm{Irr}}
\def\Core{\mathrm{Core}}
\def\om{\omega}
\def\Om{\Omega}
\def\a{\alpha}
\def\b{\beta}
\def\B{\mathbb B}
\def\k{\mathbb K}
\def\F{F}
\def\FF{\mathscr{F}}
\def\Fi{\varphi}
\def\fun{\rightarrow}
\def\Hilb{\mathrm{Hilb}}
\def\M{\mathcal{M}}
\def\MM{\mathscr{M}}
\def\P{\mathcal{P}}
\def\ini{\mathrm{ini}}
\def\iso{\cong}
\def\min{\mathrm{min}}
\def\max{\mathrm{max}}
\def\rk{\mathrm{rk}}
\def\H{h}
\def\CC{\C^*}
\def\la{\langle}
\def\ra{\rangle}
\def\CP{\mathbb{C}P}
\def\d{\operatorname{d}}
\def\Proj{\operatorname{Proj}}
\def\Tot{\operatorname{Tot}}
\def\codim{\operatorname{codim}}
\def\Pa{Painlev\'e}
\def\ph{pseudoholomorphic}
\def\MB{Morse--Bott}
\def\MBF{Morse--Bott--Floer}
\def\PW{$P=W$}
\def\llambda{t}
\DeclareMathOperator{\Gr}{Gr}
\definecolor{OxBlue}{HTML}{002147}
\title[Floer-theoretic filtration on Painlev\'e Hitchin systems]
{Floer-theoretic filtration on Painlev\'e Hitchin systems}
\author{Szilárd Szabó}
\address{Szilárd Szabó, Institute of Mathematics, Faculty of Science, E\"otv\"os Lor\'and University, P\'azm\'any P\'eter s\'et\'any 1/C, Budapest H-1117 and HUN-REN Alfr\'ed R\'enyi Institute of Mathematics, Re\'altanoda u. 13-15., Budapest H-1053,  Hungary}
\email{szabo.szilard@renyi.hu, szilard.szabo@ttk.elte.hu}
\author{Filip Živanović}
\address{ F. T. Živanović, 
Simons Center for Geometry and Physics, 
Stony Brook, NY 11794-3636, U.S.A.}
\email{fzivanovic@scgp.stonybrook.edu} 
\begin{document}

\begin{abstract}
We classify equivariant $\C^*$-actions on moduli spaces of Higgs bundles corresponding to the Painlev\'e equations. 
Using this, we compute the Floer-theoretic filtrations on the cohomology of these spaces, introduced by Ritter and the second author in \cite{RZ1}.
We compare it with the ``$P=W$'' and the filtration obtained by multiplicities of the irreducible components of the nilpotent cone, ultimately deducing that the Floer-theoretic filtration coincides with the multiplicity filtration, for all 2-dimensional Higgs moduli.
\end{abstract}

\maketitle
\setcounter{secnumdepth}{3}
\setcounter{tocdepth}{1}

%\tableofcontents  %

\section{Introduction}

Hyperk\"ahler moduli spaces of Higgs bundles on smooth compact curves, possibly with parabolic\footnote{By parabolic Higgs bundles, we will always mean that the Higgs field has logarithmic poles compatible with a quasi-parabolic filtration.} or irregular singularities, have recently seen numerous applications in various contexts in Mathematics and Physics. 
In the compact (i.e., non-punctured) case, many such applications are made possible by the existence of a natural action of the multiplicative group $\C^*$ on the moduli spaces $\M$, by rescaling the Higgs field. 
Furthermore, this action is equivariant with respect to the Hitchin map, defined as the characteristic polynomial of the Higgs field
\begin{equation}\label{intro:hitchin map}
    \H:\M\fun \mathcal{B}\iso \C^{\frac{1}{2} \dim_\C \M}.
\end{equation}
The construction of this equivariant action does not generalize directly to all moduli spaces of parabolic or irregular Higgs bundles, simply because then the boundary conditions at the punctures are not preserved. 
However, there is a trick that sometimes allows one to get around this obstacle: namely, rescale both the fiber (i.e. spectral) and base (curve) variables. 
The general procedure was described in~\cite[Section~3]{Neitzke_Fredrickson}. 
Often, the existence of such an action carries important geometric or physical information~\cite{Fredrickson_Pei_Yan_Ye},~\cite{DGNPY}. 

In this article, we study equivariant holomorphic $\C^*$-actions on $2$-dimensional\footnote{Dimensions, unless otherwise stated, are meant to be over the complex field.} moduli spaces of Higgs bundles over curves. 
One set of examples corresponds to the {\Pa} equations $PI,\ldots , PVI$ (see Section~\ref{sec:Painleve}). 
Extensive literature is devoted to these spaces from various perspectives. 
Here, we give a necessary and sufficient condition for the existence of a $\C^*$-action on these spaces $\mathcal{M}^{PX}$, %(see Proposition~\ref{prop:existence}), 
and spell out explicitly the construction of such an action whenever it exists.
Furthermore, we compute its weights on the Hitchin base $\mathcal{B}^{PX}$ and on the symplectic $2$-form $\Omega_I^{PX}$ on $\M^{PX}.$
%We do not recover all the tables given in~\cite{DGNPY}, likely due to our limited understanding of the Physics (Argyres--Douglas theory)  behind the latter paper. 
 Another source of examples are moduli spaces $\M_\Gamma$ of parabolic Higgs bundles corresponding to the affine root systems $\widetilde{A}_0,\widetilde{D}_4, \widetilde{E}_6, \widetilde{E}_7, \widetilde{E}_8$, see for instance~\cite[Section~4.1]{Boalch_wild_character_varieties},~\cite[Proposition~8.1]{Oshima},~\cite{groechenig2014hilbert},~\cite{zhang2017multiplicativity}. 
 A degenerate case in this family corresponding to $\widetilde{A}_0$ is the cotangent bundle of an elliptic curve. 
 We carry out the same study for these spaces, too. 
The topology of the case $\widetilde{E}_6$ has been studied in detail in~\cite{Mihajlovic_thesis}. 

As an example of the general theory developed in~\cite{RZ1} by using Floer-theoretic techniques, the constructed $\CC$-actions endow the singular cohomology of these spaces with the structure of a filtration $\FF_{\bullet}$.
We review key features of this construction in Section~\ref{sec:Floer}. 
Given a label $X$ in the {\Pa} classification, we introduce a two-variable generating Puiseux polynomial $P^{PX}$ that encodes the non-trivial weights of $\FF$ on the cohomology of the corresponding moduli space $\mathcal{M}^{PX}$ along with their dimensions: 
\[
    P^{PX} (q,t) = \sum_{k\in\Z_{\geq 0}, \lambda\in\mathbb{Q}} \dim_{\mathbb{B}} \Gr_{\lambda}^{\FF} H^k(\mathcal{M}^{PX};\mathbb{B}) q^{\lambda} t^k, 
\]
where $\mathbb{B}$ is any base field of characteristic zero.
Our first result may be summarized as: 

\begin{theorem}
 The equivariant $\C^*$-actions on Hitchin moduli spaces in the {\Pa} cases have invariants given in Table~\ref{table}. 
\end{theorem}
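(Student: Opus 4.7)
The plan is to proceed case by case through the Painlev\'e list $PI, PII, \ldots, PVI$ (including the degenerate variants like $PIII^{D_6}$, $PIII^{D_7}$, $PIII^{D_8}$, $P_{II}^{JM}$ that traditionally appear in the classification). For each case $PX$, I would begin from the explicit description of $\M^{PX}$ as a moduli space of meromorphic parabolic Higgs bundles on $\CP^1$ (or on a suitable pointed curve in the $PVI$ case) with prescribed polar parts, Stokes data, and residual conjugacy class at each puncture. The starting point is Proposition~\ref{prop:existence}: it translates the existence of the equivariant $\CC$-action into a compatibility condition between a rescaling weight $a\in\Z$ on the base variable $z$ of the curve, a rescaling weight $b\in\Z$ on the fiber variable $\zeta$ of the spectral coordinate, and the polar orders and parabolic weights at each puncture.

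Next, for each $PX$, I would solve the linear system coming from that compatibility condition. The Neitzke--Fredrickson procedure from~\cite{Neitzke_Fredrickson} tells us exactly which combined rescaling $(z,\zeta)\mapsto(\llambda^a z,\llambda^b \zeta)$ preserves the boundary data: the polar part of order $m$ at a puncture is preserved iff $b + a(m-1)$ matches the natural weight imposed by the Stokes structure, and the parabolic weights at regular singularities are rigid under any rescaling, which automatically forces $b$. Having fixed $(a,b)$, the action descends to $\M^{PX}$ whenever the resulting transformation of the irregular types is absorbed by a gauge transformation; if this is impossible, no such $\CC$-action exists and the corresponding row in Table~\ref{table} records the obstruction.

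Once the action is pinned down, computing the weight on the Hitchin base $\mathcal{B}^{PX}$ is a direct calculation: the spectral curve equation has the form $\zeta^2 + P(z) = 0$ (or degree two in $\zeta$ with a prescribed polar divisor in $z$), so the Hitchin base is spanned by the coefficients of $P(z)$ and their weights are determined by the combination $2b$ together with the polynomial degree. The weight on $\Om_I^{PX}$ is obtained by writing the holomorphic symplectic form in a Darboux chart $\d\zeta\wedge\d z$ associated to the spectral picture, whose weight is simply $a+b$, and verifying that residue contributions at the punctures do not alter this total weight.

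The main obstacle is the careful bookkeeping in the irregular cases $PI,PII,PIII,PIV,PV$: one must check that each term in the irregular type (the leading monomial at the wild puncture) transforms with the same $\llambda$-weight under $(a,b)$, which in general forces $b$ to be a specific rational multiple of $a$ and leads to several cases having only trivial $\CC$-action or none at all. In particular, $PIII$ and its Okamoto degenerations require separate treatment because the two punctures impose independent constraints, and one must verify that a single $(a,b)$ satisfies all of them simultaneously. The $PVI$ case is easier, since all four singularities are regular and the action reduces essentially to the standard fibrewise rescaling, giving weights $(a,b)=(0,1)$.
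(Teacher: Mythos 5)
Your plan covers only the first two columns of Table~\ref{table}. The construction of the actions themselves --- solving for weights $(w(z),w(\zeta))$ so that the one-parameter family of spectral curves is preserved, reading off the induced weight on the Hitchin base from the residual parameter $b$, and obtaining the weight on $\Omega_I^{PX}$ from the Liouville form --- is essentially what the paper does case by case in Sections~\ref{sec:PI}--\ref{sec:PVI}, so that part is sound, modulo a convention issue you should fix: in the paper $\zeta$ denotes the tautological one-form, so $w(\Omega_I^{PX})=w(\zeta)$ rather than $w(z)+w(\zeta)$ (e.g.\ for $PI$ one has $w(z)=-2$, $w(\zeta)=5$ and the symplectic weight is $5$, not $3$); your formula is correct only if your $\zeta$ is the bare fiber coordinate, in which case the spectral-curve bookkeeping must be adjusted consistently.

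The genuine gap is the last column, the generating polynomial $P^{PX}(q,t)$ recording the Floer-theoretic filtration $\FF$, which is the actual point of the theorem and of the paper's title: nothing in your proposal computes it. In the paper this requires, for each case, (i) identifying the core with the central Hitchin fiber via \cref{lemma_core_is_the_central_fibre}, then pinning down the $\C^*$-fixed locus and the integer weight decomposition of its tangent spaces using the pairing of \cref{omega_C_pairing} together with divisibility constraints forced by equivariance over the base; and (ii) feeding these weights into the lower bounds of \cref{lower_bounds_filtration}, the non-entry criterion of \cref{Cor intro about Fmin surviving}, the unit criterion of \cref{unit entering for weight-1 and weight-2 SHS}, and the stability statement \cref{filtration_stability_theorem} (or, alternatively, the spectral sequence of \cref{Theorem spectral seq}) to determine the exact rational $\lambda$ at which each cohomology class enters $\FF_\lambda$. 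Without this step the entries $q^{1/6}, q^{1/4}, q^{1/3}, q^{1/2}$ in the table cannot be produced. A secondary divergence: for the non-existence rows ($X=III,V$ and the non-degenerate variants) you propose a purely local obstruction at the punctures, whereas the paper's \cref{prop:existence} uses the global observation that an equivariant action forces at most one singular Hitchin fiber, which fails for these cases by the classification in \cite{ISS2}; your local route would need an additional argument ruling out every candidate weight pair, not just the ``natural'' one.
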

\begin{table}[H] %[!ht]
\label{table}
\begin{center}
\begin{tabular}{|c|c|c|c|}
 \hline 
 
 $X$ &  $w(\Omega_I^{PX} )$ & $w(\mathcal{B}^{PX})$ & $P^{PX} (q,t)$   \\
 \hline 
 \hline 
 $VI$  & 1 & 2 & $q^2 t^0 + (4 q^{\frac 12} + q^1) t^2$ \\
 \hline 
 $V$ & $\varnothing$ & $\varnothing$ &  $\varnothing$  \\ 
 \hline 
 $IV$ & 2 & 3 & $q^1 t^0 + 3 q^{\frac 13} t^2$ \\
 \hline 
 $III$ & $\varnothing$ & $\varnothing$ & $\varnothing$ \\ 
 \hline 
 $II$ & 3 & 4 & $q^{\frac 12} t^0 + 2 q^{\frac 14} t^2$ \\ 
 \hline 
 $I$ & 5  & 6  & $q^{\frac 13} t^0 + q^{\frac 16} t^2$ \\
 \hline 
\end{tabular}
\end{center}
\caption{The first two columns show the weights of the $\C^*$-action on the Dolbeault holomorphic symplectic form and on the Hitchin base, respectively. The last column shows the corresponding generating polynomial.}
\end{table}
Similar actions from a little bit different perspective are constructed in~\cite[Appendix~C]{Fredrickson_Pei_Yan_Ye}. 
The main difference is that our approach comes with a complete geometric understanding of the underlying moduli spaces, allowing us to work out the full Morse theory picture. 
On the other hand,~\cite[Appendix~C]{Fredrickson_Pei_Yan_Ye} works for higher-dimensional moduli spaces of irregular Higgs bundles of rank $2$ in genus $0$ too. 
We hope that the combination of these two approaches can bring new insights into the topology of such moduli spaces. 
A potential application of our result is towards Mirror Symmetry. 
Indeed, using the terminology of~\cite[Section~5.5]{KW}, the flowlines of our $\CC$-actions are branes of type $(B,A,A)$, that are expected to have duals of type $(B,B,B)$. \\

The computation of filtration $\FF$ was done in \cite{RZ2} for the
(aforementioned) 2-dimensional parabolic Higgs moduli $\M_\Gamma$,  
where $\Gamma=\{0,\Z/2,\Z/3,\Z/4,\Z/6\}$. 
To groups $\Gamma$, one associates affine Dynkin graphs
$Q_{\Gamma}:=\widetilde{A}_0, \widetilde{D}_4, \widetilde{E}_6, \widetilde{E}_7, \widetilde{E}_8$, respectively. The intersection between these and {\Pa} spaces is precisely $\M_{\Z/2}=\M^{PVI}$.
It has been shown that:

\begin{prop}\cite{RZ2} \label{Intro:Floer filter for parabolic higgs dim=2}
    The Floer-theoretic filtration $\FF(H^2(\M_\Gamma))$ refines the {\PW} filtration. Moreover, its ranks
    correspond to the labels of the imaginary root of the graph $Q_\Gamma.$
\end{prop}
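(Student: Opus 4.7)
The plan is to treat the five surfaces $\M_\Gamma$ one by one, exploiting the explicit geometry of each together with the Floer-theoretic formalism of~\cite{RZ1}. For $\Gamma=0$, $\M_0$ is the cotangent bundle of an elliptic curve; for the remaining four groups, $\M_\Gamma$ is a minimal resolution of a Kodaira singular surface of type $\widetilde{D}_4,\widetilde{E}_6,\widetilde{E}_7,\widetilde{E}_8$ respectively. In every case the core of the Hitchin map is a tree of $(-2)$-curves $E_1,\dots,E_n$ whose dual graph is the finite sub-Dynkin diagram of $Q_\Gamma$, and the classes $[E_i]$ together with one further class span $H^2(\M_\Gamma;\B)$.

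The first step is to realize the $\C^*$-action on $\M_\Gamma$. For $\Gamma=\Z/2$ this is the $PVI$ action listed in the first theorem of this introduction; for the other groups, one applies the Neitzke--Fredrickson rescaling of~\cite[Section~3]{Neitzke_Fredrickson} to the parabolic data attached to $\Gamma$. In each case the fixed locus decomposes as the union of the $E_i$ together with a finite set of isolated fixed points lying outside the core, and the induced action on each $E_i\iso \CP^1$ has exactly two fixed points, which makes the $\C^*$-equivariant Morse--Bott theory on $\M_\Gamma$ particularly tractable.

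By the construction of~\cite{RZ1}, each component $E_i$ contributes a graded piece of $\FF H^2(\M_\Gamma;\B)$ whose weight $\lambda_i$ is read off from the $\C^*$-weight on the normal bundle to $E_i$. I would extract the $\lambda_i$ by a residue calculation at either torus-fixed point of $E_i$, expressing the weights on $T_p \M_\Gamma$ in terms of the spectral and base rescaling exponents already computed in the first theorem. Matching the outcome against the Kodaira classification, which identifies the labels of the imaginary root of $Q_\Gamma$ with the multiplicities of the components $E_i$ in the fibre of the Hitchin map, then yields the identification of the ranks of $\Gr^{\FF} H^2(\M_\Gamma;\B)$ with those labels.

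Finally, for the refinement claim I would invoke the explicit description of the $P=W$ filtration on 2-dimensional Higgs moduli: every $E_i$ is contained in the nilpotent cone $h^{-1}(0)$, so the classes $[E_i]$ all lie in a single perverse Leray level of $H^2(\M_\Gamma;\B)$. Since the weights $\lambda_i$ are not all equal, the Floer filtration strictly refines this level. The main obstacle throughout is the weight computation in step three: the normalization conventions in the Neitzke--Fredrickson rescaling must be carefully aligned with those of the Dynkin labels, so that the $\lambda_i$ come out as the labels of the imaginary root themselves rather than some monotone reparametrization of them.
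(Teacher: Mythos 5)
This statement is quoted from \cite{RZ2}; the paper does not reprove it, but it does lay out the machinery one would need, and your proposal diverges from that machinery in ways that matter. The central gap is in your step three: in the framework of \cite{RZ1,RZ2} the filtration $\FF_\lambda$ is the kernel of a Floer continuation map, and there is no statement to the effect that ``each $E_i$ contributes a graded piece whose weight is read off from the $\C^*$-weight on the normal bundle to $E_i$.'' Determining when a class enters $\FF$ requires both lower bounds (\cref{lower_bounds_filtration}, via the indices \eqref{Floer_grading_of_F_alphas_in_HF*(H_lambda)}) and, crucially, upper bounds showing a class has \emph{not} yet entered (\cref{Cor intro about Fmin surviving}, \cref{unit entering for weight-1 and weight-2 SHS}, the stability statement \cref{filtration_stability_theorem}, and in the $\widetilde{E}_{6,7,8}$ cases the Morse--Bott--Floer spectral sequence of \cref{Theorem spectral seq}). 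A residue computation of tangent weights at fixed points supplies none of the latter, so your argument cannot rule out that a class enters earlier than claimed, and the asserted match with the imaginary root labels is unsupported.

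There are also factual errors in your geometric setup that would derail the computation even if the mechanism were right. The core is the \emph{affine} tree $Q_\Gamma$ of curves (\cref{prop:Zhang}), not the finite sub-Dynkin diagram plus one extra class, and $H^2(\M_\Gamma)$ is spanned by the $[E_i]$ for all affine vertices. The claim that the action on each $E_i\iso\CP^1$ has exactly two fixed points fails for the central component: in the $\widetilde D_4$ case it meets four other components, so it is pointwise fixed (as the paper shows explicitly in \cref{sec:PVI}), and the analogous phenomenon occurs for $\widetilde E_{6,7,8}$. Finally, your description of {\PW} is wrong: the perverse filtration on $H^2(\M_\Gamma)$ has two nontrivial steps, $P_1=\la [E_i]\mid i\text{ not central}\ra\subset P_2=H^2$ as in \eqref{P=W on H^2 for parabolic 2-dim Higgs}, so the $[E_i]$ do not all sit in a single perverse level; proving refinement requires identifying $P_1$ as an actual step of $\FF$ (knowing which classes lie in each $\FF_\lambda$, not merely the ranks). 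The correct route to the imaginary-root identification is the one in \cref{P=W_Floer_Multiplicity}: the multiplicities $m_i$ of the components in $h_\Gamma^{-1}(0)$ equal the labels $n_i$ because the fiber class generates the radical of the intersection form, whose negative is the affine Cartan matrix; the Floer ranks are then matched to the $m_i$ by the explicit computations of \cite{RZ2}.
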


By {\PW} we mean the perverse `P' filtration associated to the Hitchin map \eqref{intro:hitchin map}, which is in general proved to be equal to the weight `W' filtration of the corresponding character variety $\M'$, diffeomorphic to $\M$, under the non-Abelian Hodge correspondence, hence the ``\PW'' name. 
%This equality motivates the name for this filtration.
%The conjecture has been proved in \cite{ , hence the name we use for this filtration.

The labelling of the imaginary root mentioned in \cref{Intro:Floer filter for parabolic higgs dim=2}
can be described in terms of the multiplicities of the irreducible components of the central fiber $h_\Gamma^{-1}(0)$ 
of the Hitchin map
$$\H_\Gamma:\M_\Gamma\fun \mathcal{B}_\Gamma \iso \C,$$ with its inherited scheme structure.
%.
That motivates us to define \textit{multiplicity filtration}, for a general Higgs moduli space $\M$. Given the central fiber of its Hitchin map %(also known as the core)
%Denote $$ the biggest multiplicity of components.
with its irreducible components $E_i$ and their multiplicities $m_i,$
$$h^{-1}(0)=\sum_i m_i E_i,$$
denote $m:=\max_i\, m_i,$ and define the \textbf{multiplicity filtration} on $H^{\dim_\C \M}(\M)$ to be
    $$\MM_{\lambda}:=\la [E_i] \mid m_i/m \leq \lambda \ra.$$
This is well defined, as all components $E_i$ are equidimensional, of real dimension $\dim_\C \M$, and the central fiber is deformation-retract of $\M,$
thus $H^{\dim_\C \M}(\M)$ is generated by $[E_i]$.

\begin{remark}
Interestingly, the multiplicities $m_i$ are discussed for the
irreducible components $E_i$ of a special type (called very stable Higgs bundles) in recent work by Hausel--Hitchin \cite{hausel2022very}, where they show that these numbers
show up in the (conjectured) mirror symmetry for Higgs moduli spaces.
\end{remark}
With this notation in mind, summarizing the work of this paper and \cref{Intro:Floer filter for parabolic higgs dim=2}, we get:

\begin{thm} Given a $2$-dimensional Higgs moduli space $\M$ which has a %a $I$-%
holomorphic $\CC$-action and such that the Hitchin map has generically smooth fibers, its multiplicity and Floer-theoretic filtration coincide rank-wise
$$\rk\, \MM_\lambda = \rk\,\FF_\lambda.$$
When $\M$ is a parabolic Higgs moduli, they both refine the {\PW} filtration, whereas 
%in the  case. In the  
for {\Pa} spaces, {\PW} refines the former two filtrations. 
Thus, in the intersection, i.e. for {\Pa} VI space, these three filtrations coincide. 
\end{thm}

Thus, the last theorem compares the three filtrations for all 2-dimensional Higgs moduli for which filtrations $\FF$ and $\MM$ make sense.
An interesting avenue for future research is to understand how this generalises in higher dimensions. Due to \cite{groechenig2014hilbert}, we know that the Hilbert schemes of parabolic 2-dimensional moduli, 
$\Hilb^n(\M_\Gamma)$, are parabolic Higgs moduli spaces as well.
It would be interesting to understand the
relation between the {\PW}, Floer-theoretic, and multiplicity filtration for these spaces.\\\\
\noindent \textbf{Acknowledgments.} 
We thank Alexandre Minets
for helpful conversations, in particular for pointing out the coincidence between the multiplicities of the components of the central fiber and the corresponding imaginary root, for 2-dimensional parabolic Higgs moduli.
%\footnote{See \eqref{imaginary root reads the multiplicities}}
This work grew out of discussions at the workshop "Birational Geometry and Quantum Invariants" held at the Simons Center for Geometry and Physics in November 2023. 
The first author would like to thank the organizers of the meeting for the invitation, the Center for its hospitality, and grants KKP 144148 and K146401 of the agency NKFIH (Hungary) for support. The second author wishes to thank the Simons Center for its hospitality.

\section{Filtration from the Floer theory of a $\CC$-action}\label{sec:Floer}

In this section, we recall the relevant results from the construction of the Floer-theoretic filtration by Ritter--\v{Z}ivanovi\'c, which we are going to use in order to compute this filtration in the next section. To keep the current paper brief, we do not include many details regarding the Floer theory, but rather we refer the interested reader to their papers \cite{RZ1, RZ2}.

%that we are going to use in this paper. 
In that work, the authors consider %a filtration on the cohomology of a 
connected symplectic manifolds $(Y,\omega)$
admitting a \textbf{contracting} $\CC$-action $\Fi$, which means that given an arbitrary point $y\in Y$, convergence point
\begin{equation}\label{equation_convergence_point}
     y_0:=\lim_{\C^* \ni t\rightarrow 0} t\cdot y \;\in Y^{\CC}
\end{equation}
always exists, and the set of these points, equal to the set of fixed points $\F:= Y^{\CC}$, is compact.
The $\CC$-action is $I$-{\ph} with respect to an almost complex structure $I$ compatible\footnote{Meaning that $\om(\cdot, I \cdot)$ gives a Riemannian metric.} with symplectic form $\omega.$
Furthermore, the $S^1$-part has to be Hamiltonian, meaning that there is a moment map, i.e. a function $$H:Y\fun \R$$ satisfying
$\om(\cdot, X_{S^1})=dH,$
where $X_{S^1}$ is the vector-field of the $S^1$-action.
Altogether, this is what is called a \textbf{symplectic $\CC$-manifold,} which we assume for $Y$ from now on.
Such manifolds have an important compact subset called the \textbf{core}
\begin{equation} \label{equation_of_the_core}
    \Core(Y):=\{ y\in Y \mid y_{\infty}:= \lim_{\C^* \ni t\rightarrow \infty} t\cdot y \text{ exists}\} \supset \F
\end{equation}
In the cases when $Y$ is an algebraic variety and $\CC$-action is algebraic, $\Core(Y)$ is a subvariety and a deformation retract of $Y,$ in particular 
\begin{equation}\label{equation cohomology of the core equal to the total space}
   H^*(Y)\iso H^*(\Core(Y)).
\end{equation}
We have an easy lemma that applies to the spaces considered in this paper.
\begin{lm}\label{lemma_core_is_the_central_fibre} 
Given a symplectic $\CC$-manifold $Y$ with a proper $\CC$-equivariant map $$\Psi: Y\fun \mathcal{B}$$
to an affine space $\mathcal{B}$ with a linear $\CC$-action with only positive weights,
%where the action on $B$ contracts it to a single point, $\forall b\in B, \lim_{t\fun 0} t\cdot b = 0\in B,$ 
then $\Core(Y)=\Psi^{-1}(0).$
\end{lm}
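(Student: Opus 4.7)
The plan is to prove the two inclusions separately, and the asymmetry between them is reflected in which hypothesis does the work: properness handles one direction, positivity of weights handles the other.

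For the inclusion $\Psi^{-1}(0) \subseteq \Core(Y)$, I would start from a point $y$ with $\Psi(y)=0$ and use equivariance to write $\Psi(t\cdot y) = t\cdot\Psi(y) = t\cdot 0$. Since the linear action on $\mathcal{B}$ has only positive weights, $0$ is fixed, so the entire orbit $\CC^*\cdot y$ lies in $\Psi^{-1}(0)$, which is compact because $\Psi$ is proper. To extract convergence as $t\to\infty$, I would invoke the fact that on a symplectic $\CC$-manifold the generator of the $\R_{>0}$-part of the action coincides, up to the Kähler metric determined by $I$ and $\omega$, with the gradient flow of the moment map $H$; a bounded trajectory of this gradient flow converges to a critical point of $H$, which is exactly a point of $F$. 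Hence $y_\infty$ exists and $y \in \Core(Y)$.

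For the reverse inclusion $\Core(Y) \subseteq \Psi^{-1}(0)$, the plan is to push the existence of $y_\infty$ through $\Psi$. By continuity and equivariance,
\[
\Psi(y_\infty) \;=\; \lim_{\CC^* \ni t\to\infty} \Psi(t\cdot y) \;=\; \lim_{\CC^* \ni t\to\infty} t\cdot \Psi(y).
\]
Decomposing $\mathcal{B}$ into weight spaces with weights $w_1,\ldots,w_n > 0$ and writing $\Psi(y) = (b_1,\ldots,b_n)$ in coordinates, the right-hand side reads $(t^{w_1}b_1,\ldots,t^{w_n}b_n)$. Since every $w_i$ is strictly positive, such a limit as $|t|\to\infty$ exists only if each $b_i$ vanishes, forcing $\Psi(y)=0$.

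The step I expect to be the most delicate is the convergence assertion in the first inclusion: boundedness of a $\CC^*$-orbit in $\Psi^{-1}(0)$ does not by itself guarantee convergence at infinity, and one genuinely needs the Kähler/Hamiltonian structure together with a Łojasiewicz-type argument for the Morse--Bott function $H$ restricted to the compact invariant subset $\Psi^{-1}(0)$. Everything else is a straightforward manipulation of equivariance and weights.
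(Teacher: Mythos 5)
Your argument is correct, and it is essentially the intended one: the paper states this lemma without proof (calling it ``easy''), and the two inclusions you give --- equivariance plus properness to trap the orbit in the compact set $\Psi^{-1}(0)$ and then convergence of the bounded gradient trajectory of the Morse--Bott moment map $H$ (cf.\ \cref{Morse_Bott_decomposition_theorem_for_moment_map}) for one direction, and the positivity of the weights $w_i>0$ forcing $\Psi(y)=0$ for the other --- are exactly what is needed. The only point worth making explicit is that convergence along $\R_{>0}$ upgrades to convergence along all of $\C^*$ because the limit point is $S^1$-fixed (being a critical point of $H$) and $S^1$ is compact.
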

Consider a directed system of Hamiltonian Floer cohomologies $\{HF^*(H_\lambda)\}_{\lambda \in \R\setminus \P}$ with Hamiltonians $H_\lambda$ that are, 
outside a compact set, linear with slope $\lambda$ with respect to $H$, and
$\lambda$ does not belong to the (discrete) set of periods $\P$ of the $S^1$-action $\Fi$.

Briefly, $HF^*(H_\lambda)$ is the homology of the chain complex generated by 1-periodic orbits of the Hamiltonian vector-field $X_{H_\lambda}$ (defined as the dual of the differential,
$\om(\cdot, X_{H_\lambda}) = dH_\lambda$)
and the differential counts pseudo-holomorphic curves perturbed by the same vector field,
so satisfying the equation $\partial_s u + I (\partial_t u - X_{H_\lambda})=0.$ The cohomological gradings are by certain Robbin--Salamon indices, which in general are hard to compute. But in our setup, since the $S^1$-action is pseudoholomorphic, it amounts to 
computing winding numbers of paths of unitary matrices, which is computable, as we will see 
(Equations \eqref{Floer_grading_of_F_alphas_in_HF*(H_lambda)}, \eqref{CZindecesOfMorseBottSumbanifolds}).

The authors in \cite{RZ1} define the filtration by ideals on quantum cohomology ring $QH^*(Y)$: %by quantum ideals 
\begin{equation} \label{filtration_definition}
    \FF_\lambda^{\Fi}:=\ker(c_{\lambda}^*:QH^*(Y) \fun HF^*(H_\lambda))
\end{equation}
%$$$$
where $c_\lambda^*$ is a Floer-theoretic type of map (called a \textit{continuation map}).
For $p\in \P,$ simply define $\displaystyle \FF_{p}^{\Fi}:=\cap_{p< \lambda} \FF_{\lambda}^{\Fi}.$ 
Quantum cohomology as a $\k$-vector space is $QH^*(Y)=H^*(Y,\k)$, with
\begin{equation}\label{EqnSec2NovikovField}
\textstyle \k := \{\sum n_j T^{a_j}\mid a_j\in \R, a_j\to \infty, n_j\in \mathbb{B} \},
\end{equation}
is the \textit{Novikov field}, with
formal variable $T$ in grading zero, and $\mathbb{B}$ is any choice of base field of characteristic zero.\footnote{This condition is not necessary for the theory, but it is assumed in the computations, to avoid dealing with torsion coefficients.} Therefore, by ``letting $T\fun 0$'', one defines the \textit{specialized} $\Fi$-filtration on ordinary cohomology ring $H^*(Y):=H^*(Y,\B)$ by \textit{cup-ideals}
\begin{equation}\label{Specialised filtration}
    \FF_{\B,\lambda}^{\Fi}:=\ini(\FF_{\lambda}^{\Fi}) \subset H^*(Y;\mathbb{B}),
\end{equation}
where $\ini(x)$ is the initial term
$n_0 \in H^*(Y;\mathbb{B})$ of an element %the formal Laurent ``series'' 
$x=n_0 T^{a_0} + (T^{>a_0}\textrm{-terms})$.

Filtration \eqref{Specialised filtration} is thus more feasible than \eqref{filtration_definition} to be comparable to other known filtrations, as we will see in the examples.
However, in practice, we will often blur the difference between them in practice as (by linear algebra) we have
 $$\rk_{\B}\, \FF_{\B,\lambda}^{\Fi}
    = 
    \rk_{\k}\, \FF_{\lambda}^{\Fi}.$$

These filtrations are compatible with cohomological grading, so one can consider them degree-wise.
% restrictions on each $QH^k(Y)/H^k(Y).$

In order for \eqref{filtration_definition} to be well defined, it is further assumed that there is a proper map with certain properties $$\Psi: Y\fun \mathcal{B}$$ to a symplectically-convex base $\mathcal{B}$, calling such $Y$ a \textbf{symplectic $\CC$-manifold over a convex base}. The obtained filtration is an invariant of such manifolds, and in general it \textit{depends} on the action $\Fi.$
A particular instance, which is enough for the spaces considered in this paper, 
is 
\begin{thm} Let $Y$ be a symplectic $\CC$-manifold
with a proper $(I,i)$-{\ph} map $$\Psi: Y \fun \mathcal{B}=\C^n$$
that is $\C^*$-equivariant with respect to a linear action on $\mathcal{B}$ with positive weights.
Then \eqref{filtration_definition}, thus \eqref{Specialised filtration} is well-defined.
\end{thm}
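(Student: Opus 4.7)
The plan is to verify that the hypotheses on $(Y,\Psi)$ place $Y$ within the class of symplectic $\CC$-manifolds over a convex base treated in~\cite{RZ1}, where the construction of the filtration is established in general. The only point that genuinely requires verification is a uniform $C^0$-confinement (a ``no-escape lemma'') for the Floer and continuation trajectories associated to the Hamiltonians $H_\lambda$. Once this is in hand, the groups $HF^*(H_\lambda)$ for $\lambda\notin\P$, the continuation maps $c^*_\lambda$, and hence the kernels~\eqref{filtration_definition} are all defined; the specialisation~\eqref{Specialised filtration} then follows tautologically by taking initial terms, as described in the excerpt.

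First I would produce a convex exhaustion of $Y$ compatible with $\Fi$. Since $\mathcal{B}=\C^n$ carries a linear $\CC$-action with positive weights $w_1,\dots,w_n$, the function $\rho(z)=\sum_i|z_i|^{2/w_i}$ (smoothed near the origin) is exhausting, strictly plurisubharmonic outside $0$, $S^1$-invariant, and its Hamiltonian vector field with respect to the standard form on $\C^n$ is a positive multiple of the infinitesimal generator of the $S^1$-part of the action on $\mathcal{B}$. Because $\Psi$ is proper and $(I,i)$-pseudoholomorphic, the pullback $\Psi^*\rho$ is a proper plurisubharmonic exhaustion of $Y$ outside a compact set. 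By $\CC$-equivariance of $\Psi$, its Hamiltonian vector field agrees at infinity with the generator of the $S^1$-part of $\Fi$, so $\Psi^*\rho$ and the given moment map $H$ differ by a bounded function outside a compact set; I may therefore take $H=\Psi^*\rho$ at infinity without loss of generality.

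The key step is the maximum principle. For a Floer solution $u$ of the equation determined by $H_\lambda$, the composition $\Psi\circ u$ satisfies an inhomogeneous pseudoholomorphic equation on $\C^n$. Linearity of $H_\lambda$ in $H$ at infinity with slope $\lambda\notin\P$, combined with strict plurisubharmonicity of $\rho$ away from the origin, yields a subharmonicity inequality of the form $\Delta(\rho\circ\Psi\circ u)\geq 0$ on the region where $\Psi\circ u$ is large. This is exactly the situation in which the no-escape lemma of~\cite{RZ1} applies: any Floer or continuation solution whose asymptotic $1$-periodic orbits lie in a compact set stays in a compact set. The assumption $\lambda\notin\P$ forces those asymptotic orbits to be isolated perturbations of the fixed locus $\F$, which is compact by the definition of a symplectic $\CC$-manifold, giving the required a priori $C^0$-bound depending only on $\lambda$.

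The main obstacle is precisely this $C^0$-confinement: all three of the hypotheses -- properness of $\Psi$, its $(I,i)$-pseudoholomorphicity, and the positivity of the weights of the $\CC$-action on $\mathcal{B}$ -- are invoked together here, and relaxing any one of them would allow Floer trajectories to escape to infinity and wreck the continuation system. Once the bound is established, Gromov--Floer compactness, transversality, and gluing proceed in the framework of~\cite{RZ1}, so the directed system $\{HF^*(H_\lambda)\}_{\lambda\notin\P}$ and the continuation maps $c^*_\lambda:QH^*(Y)\to HF^*(H_\lambda)$ are well-defined. The kernels $\FF^{\Fi}_\lambda=\ker c^*_\lambda$ therefore form a filtration by ideals on $QH^*(Y)$, and the initial-term construction produces the specialised filtration $\FF^{\Fi}_{\B,\lambda}$ on $H^*(Y;\B)$, as required.
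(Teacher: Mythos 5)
The paper never proves this theorem: it is recalled verbatim from \cite{RZ1}, so there is no in-text argument to compare yours against. Your outline does identify the correct framework and the genuinely substantive point --- the a priori $C^0$-confinement of Floer and continuation solutions, which is where properness of $\Psi$, its $(I,i)$-pseudoholomorphicity, and positivity of the weights on $\mathcal{B}$ all enter --- and this matches the architecture of the argument in \cite{RZ1}.

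However, one intermediate step is genuinely false and load-bearing as you use it. You claim that because $\Psi$ is equivariant, the pullback $\Psi^*\rho$ of $\rho(z)=\sum_i|z_i|^{2/w_i}$ and the given moment map $H$ ``differ by a bounded function outside a compact set,'' and you then replace $H$ by $\Psi^*\rho$ when building the linear-at-infinity Hamiltonians. Equivariance of $\Psi$ identifies the generating vector fields of the two circle actions, not their Hamiltonians, and the discrepancy is typically unbounded: take $Y=\C$ with the weight-$1$ action and $H=\tfrac12|z|^2$, $\mathcal{B}=\C$ with the weight-$2$ action and $\Psi(z)=z^2$, so that $\rho(b)=|b|$ and $\Psi^*\rho=|z|^2=2H$. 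Replacing $H$ by $\Psi^*\rho$ rescales all slopes, hence changes the set $\P$ of excluded periods and the indexing of the directed system $\{HF^*(H_\lambda)\}$ --- precisely the quantitative data ($\lambda=k/m$, the jump values of $\FF_\lambda$) that the rest of the paper depends on. The proof in \cite{RZ1} keeps the Hamiltonians linear in the intrinsic moment map $H$ on $Y$ and uses $\Psi$ only through the plurisubharmonic function $\rho\circ\Psi$ in the no-escape lemma; you should likewise decouple the confinement function from the Hamiltonian. A smaller inaccuracy: the Hamiltonian flow of $\rho$ on $\C^n$ rotates the $i$-th coordinate plane at angular speed proportional to $w_i^{-1}|z_i|^{2/w_i-2}$, so it is not a positive multiple of the generator of the linear $S^1$-action unless all weights are equal; what the maximum principle actually requires is only that $\rho$ is an $S^1$-invariant exhaustion with $\rho(t\cdot z)=|t|^2\rho(z)$, and your argument should be phrased in those terms.
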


\cite{RZ1} provides many methods to obtain information on filtration $\FF^{\Fi}$, which are sometimes sufficient to completely determine it, as for the spaces considered in this paper.
We will briefly describe some of these methods.
Consider the connected components $\F_\a$ of the fixed locus of $\CC$-action $\Fi$:
\begin{equation}\label{fixed_locus}
Y^{\C^*}=\F=\sqcup_\a \F_\a.
\end{equation}
Each component has an $I$-linear weight decomposition of its tangent bundle,
\begin{equation}\label{weight decomposition}
  T\F_\a= \oplus_{k\in\Z} H_k,\ H_k := \{v \mid t\cdot v= t^k v\}. 
\end{equation}
For the spaces considered in this paper, these decompositions
%the presence of an holomorphic symplectic form 
%\eqref{weight decomposition}
are 
easily-computable, due to:\footnote{for proof see e.g. \cite[Lemma 8.7]{RZ1}}
\begin{lm}\label{omega_C_pairing}
Fix a component $\F_\a$. 
If $Y$ has an $I$-holomorphic symplectic structure $\Om_I$ with positive $\CC$-weight,
$t \cdot \Om_I=t^s \Om_I$ for some integer $s>0,$ then there is a non-degenerate pairing
$$\Om_I: H_k \times H_{s-k}\fun \C$$
in particular $H_k \iso H_{s-k}^{\vee}.$
\end{lm}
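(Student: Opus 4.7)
The plan is to reduce the statement to pointwise linear algebra on a single fibre, and then to observe that the bundle isomorphism follows by naturality. Fix a fixed component $\F_\a$ and a point $p\in \F_\a$; it suffices to show that for every $k$ the restricted pairing $\Om_I\colon (H_k)_p\times (H_{s-k})_p\to \C$ is non-degenerate, and that the resulting isomorphism $(H_k)_p\iso (H_{s-k})_p^\vee$ is $\CC$-equivariant in $p$, hence bundle-theoretic.

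The key computation is to evaluate the hypothesis $\phi_t^*\Om_I = t^s \Om_I$ on pairs of weight vectors in two different ways. For $v\in (H_k)_p$ and $w\in (H_l)_p$, the definition of the weight decomposition gives $d\phi_t(v) = t^k v$ and $d\phi_t(w) = t^l w$, so
\[
(\phi_t^*\Om_I)_p(v,w) \;=\; \Om_I\bigl(d\phi_t(v), d\phi_t(w)\bigr) \;=\; t^{k+l}\,\Om_I(v,w).
\]
On the other hand, the equivariance hypothesis gives $(\phi_t^*\Om_I)_p(v,w) = t^s\,\Om_I(v,w)$. Comparing the two expressions as polynomials in $t\in\CC$ forces $\Om_I(v,w) = 0$ whenever $k+l\neq s$. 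Hence $\Om_I$ is block-antidiagonal with respect to the weight decomposition $T_pY = \bigoplus_k (H_k)_p$, with non-zero blocks only between $H_k$ and $H_{s-k}$.

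Since $\Om_I$ is non-degenerate on $T_pY$, and since any degeneracy of the restricted pairing $(H_k)_p\times (H_{s-k})_p\to\C$ would produce a vector in $(H_k)_p$ lying in the kernel of $\Om_I$ on all of $T_pY$, that restricted pairing must itself be non-degenerate. This yields the isomorphism $(H_k)_p\iso (H_{s-k})_p^\vee$ at each $p$; because $\Om_I$ and the weight bundles are holomorphic, the construction glues to a bundle isomorphism $H_k\iso H_{s-k}^\vee$ over $\F_\a$.

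There is no genuine obstacle here — the argument is essentially a character computation for the $\CC$-representation on $T_pY$ combined with non-degeneracy of $\Om_I$. The only subtlety to flag is that the ``weight decomposition of $T\F_\a$'' in the statement should be read as the decomposition of $TY|_{\F_\a}$; in particular $H_0 = T\F_\a$ is itself isotropic for $\Om_I$ when $s>0$, and the non-degenerate pairing sends tangential directions along $\F_\a$ to the normal directions of weight $s$. This is consistent with, and indeed explains, why the fixed locus need not be symplectic under $\Om_I$ when $s\neq 0$.
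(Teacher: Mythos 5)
Your argument is correct and is exactly the standard character computation given in the cited reference \cite[Lemma 8.7]{RZ1} (the paper itself only cites that lemma rather than reproving it): equivariance forces $\Om_I$ to vanish on $H_k\times H_l$ unless $k+l=s$, and non-degeneracy of $\Om_I$ on $T_pY$ then forces each anti-diagonal block to be a perfect pairing. Your side remark that the ``weight decomposition of $T\F_\a$'' must be read as a decomposition of $TY|_{\F_\a}$ (with $H_0=T\F_\a$ isotropic when $s>0$) is also right and matches how the paper actually uses the notation later, e.g.\ in the decompositions $T_{F}Y=H_{-k_1}\oplus H_{k_2}$.
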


Cohomology of the total space and fixed loci are related due to \cite[Lemmas 3.5, 3.22]{RZ1}:
\begin{prop}\label{Morse_Bott_decomposition_theorem_for_moment_map} The $S^1$-moment map $H$ is a {\MB} function, and one has a decomposition\footnote{where the shift notation is usual -- given a graded module $A^*,$ the graded
module $A^*[k]$ is defined by $A[k]^i := A^{i+k}.$}
\begin{equation}\label{EqnFrankelIntro}
H^*(Y)\cong \oplus_{\a} H^*(\F_\a)[-\mu_\a],
\end{equation}
where $\mu_\a$ are {\MB} indices of $\F_\a,$ or equivalently $\mu_\a=2 \dim_\C (\oplus_{k<0} H_k)$.
In particular, among the fixed components $\F_\a$, there is precisely \textit{one} $\F_\min$
on which the moment map $H$ attains its minimum.
%call it $\F_{\min}$. 
\end{prop}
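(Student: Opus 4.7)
The plan is to treat $H$ as a Morse--Bott function via the Kähler-like structure on $Y$, deduce the additive cohomology decomposition by the Frankel--Atiyah--Bott principle, and extract uniqueness of the minimum from the degree-zero summand.

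Compatibility of $\omega$ with $I$ provides a Riemannian metric $g(u,v) = \omega(u,Iv)$. Writing $X_H$ for the $S^1$-Hamiltonian vector field, we have $\omega(X_H, \cdot) = dH$ and hence $\nabla H = IX_H$. Since the $\CC$-action is $I$-holomorphic, $IX_H$ coincides with the infinitesimal generator of the $\R_{>0}$-factor, so the gradient flow of $H$ equals the real scaling flow and $\operatorname{Crit}(H) = Y^{\CC} = \F$. Linearising at $y_0 \in \F_\a$ via an equivariant Darboux chart, a local model is provided by the linear $\CC$-representation $\bigoplus_k H_k$ on $T_{y_0}Y$ in which
\[
H(v) = H(y_0) + \tfrac12 \sum_{k\ne 0} k\,\|v_k\|^2 + O(\|v\|^3).
\]
The Hessian is non-degenerate on the normal directions $\bigoplus_{k\ne 0} H_k$ to $\F_\a$, giving the Morse--Bott property; the real dimension of the negative Hessian eigenspaces equals $\mu_\a = 2 \dim_\C \bigoplus_{k<0} H_k$.

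For the cohomology decomposition I would apply the Frankel--Atiyah--Bott argument: since all $\mu_\a$ are even, the Morse--Bott spectral sequence collapses at its first page and yields the additive isomorphism \eqref{EqnFrankelIntro}. Non-compactness of $Y$ is accommodated by the contracting hypothesis and properness of $\Psi$: these ensure the negative-gradient flow of $H$ is complete and permit restriction to the core $\Core(Y) = \Psi^{-1}(0)$, which by \eqref{equation cohomology of the core equal to the total space} has the same cohomology as $Y$. This is the step I expect to require the most care, since the classical Morse--Bott machinery has to be adapted to the open setting (and since, strictly speaking, one wants the decomposition at the level of quantum cohomology rather than singular cohomology).

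Finally, uniqueness of $\F_\min$ is read off from the decomposition itself. Connectedness of $Y$ yields $H^0(Y) \cong \B$. Under the shift convention $A[k]^i = A^{i+k}$, the only summands of \eqref{EqnFrankelIntro} contributing to degree zero are those with $\mu_\a = 0$; and $\mu_\a = 0$ is equivalent to the vanishing of $H_k$ for all $k<0$, i.e., to $H$ attaining a local minimum on $\F_\a$. Since each $\F_\a$ is connected and non-empty, $H^0(\F_\a) = \B$, forcing exactly one index $\a$ to satisfy $\mu_\a = 0$, which is the desired $\F_\min$.
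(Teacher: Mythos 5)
Your route is the standard one (Frankel/Atiyah--Bott applied to the moment map of the $S^1$-part), and it is worth noting that the paper does not prove this proposition at all: it is imported verbatim from \cite[Lemmas 3.5, 3.22]{RZ1}, so there is no in-paper argument to compare against. Your identification $\nabla H = IX_H$, the conclusion $\operatorname{Crit}(H)=Y^{\CC}$, the local normal form $H(v)=H(y_0)+\tfrac12\sum_k k\|v_k\|^2+O(\|v\|^3)$ giving the Morse--Bott property and $\mu_\alpha=2\dim_\C\bigoplus_{k<0}H_k$, and the extraction of uniqueness of $\F_{\min}$ from the degree-zero part of \eqref{EqnFrankelIntro} are all correct. (For the last point there is also a decomposition-free argument: the Bialynicki--Birula cell attached to $\F_\alpha$ has real codimension $\mu_\alpha$, so the cells with $\mu_\alpha>0$ cannot disconnect $Y$, and connectedness forces exactly one open cell, i.e.\ one component with $\mu_\alpha=0$.)

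The one step that does not hold as written is the degeneration: ``since all $\mu_\alpha$ are even, the Morse--Bott spectral sequence collapses at $E_1$'' is not a valid implication in the stated generality. Parity of the indices only kills differentials when the $H^*(\F_\alpha)$ are themselves concentrated in even degrees; the fixed components are merely compact complex submanifolds and may well have odd cohomology (this occurs in the companion family of this paper: for $\M_\Gamma$ with $\Gamma=0$, i.e.\ $T^*E$, the fixed locus is the elliptic curve $E$). The actual mechanism in Frankel--Atiyah--Bott is the complex structure on the negative normal bundles: their $S^1$-equivariant Euler classes are non-zero-divisors because all weights are non-zero, so $H$ is equivariantly perfect and hence perfect over any base field. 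Relatedly, ``restricting to the core'' is not literally available, since $\Core(Y)$ is in general singular and carries no Morse theory; what one needs --- and what the standing hypotheses (properness of $\Psi$, compactness of $\F$, the contracting condition) are there to supply --- is that $H$ is a proper exhaustion bounded below, so that sublevel sets are compact and the usual attaching argument applies. You flagged the second issue yourself; with these two repairs the proof is complete.
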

% This is due to 
% the connectedness of $Y$ and the Morse-Bott decomposition

% where $\mu_\a$ are {\MB} indices of $\F_\a,$ seen as critical submanifolds of {\MB} function $H$, see 
% Equivalently $\mu_\a=2 \dim_\C (\oplus_{k<0} H_k)$.

In the setup of \cref{omega_C_pairing}, when $s=1$, the minimum $\F_{min}$
is an exact Lagrangian submanifold with respect to the Liouville forms $\mathbb{R}e(\Om_I),\mathbb{I}m(\Om_I)$, see
\cite{vzivanovic2022exact}.
%This is due to the connectedness of $Y$ and the cohomology decomposition

Fixing an action $\Fi$, we will omit it in the following notation for filtration $\FF_{\lambda}:=\FF_{\lambda}^{\Fi}$.

The following statement tells us until when the filtration does not intersect the $H^*(\F_{\min}).$

\begin{prop}
\label{Cor intro about Fmin surviving}
Let $\lambda_{\min}:=1/(\textrm{maximal absolute weight of }\F_{\min})$.
If $H^{odd}(Y)=0$, and $\lambda<\lambda_{\min}$, then $c_{\lambda}^*:H^*(\F_{\min};\k)\to HF^*(H_{\lambda})$ is injective, so $\FF_{\lambda}^{}\cap H^*(\F_{\min};\k)=\{0\}$, and in \eqref{EqnFrankelIntro}
%in 
%\cref{Cor intro about B filtration}:
$$
\FF_{\B,\lambda}^{} \subset \oplus_{\a\neq \min} H^*(\F_{\a})[-\mu_\a]. 
$$
%\end{cor}

\end{prop}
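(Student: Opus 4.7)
The plan is to use the Morse–Bott–Floer formalism of \cite{RZ1} to show that $H^*(\F_{\min})$ is detected inside $HF^*(H_\lambda)$ by the continuation map. First I would choose a Hamiltonian $H_\lambda$ that coincides with $\lambda H$ on a neighborhood $U$ of $\F_{\min}$ and is linear of slope $\lambda$ at infinity. Because $H$ attains its minimum on $\F_{\min}$, the Morse–Bott index $\mu_{\min}$ vanishes, and the identity $\mu_{\min}=2\dim_{\mathbb{C}}\bigoplus_{k<0}H_k$ from \cref{Morse_Bott_decomposition_theorem_for_moment_map} forces all $S^1$-weights on $TY|_{\F_{\min}}$ to be non-negative, hence at most $w_{\max}=1/\lambda_{\min}$. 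The hypothesis $\lambda<\lambda_{\min}$ therefore prevents the identity $\lambda w=k$ from being satisfied for any positive weight $w$ at $\F_{\min}$ and any positive integer $k$, so $H_\lambda$ admits no non-constant $1$-periodic orbits inside $U$: the only such orbits are the constants comprising $\F_{\min}$.

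I would then set up the action filtration on $CF^*(H_\lambda)$, whose generators arise via Morse–Bott theory from the $1$-periodic orbits of $H_\lambda$. The generators from $\F_{\min}$ carry action $-H(\F_{\min})$, strictly smaller than the action of every other generator, so they occupy the bottom stratum of the filtration and contribute the summand $H^*(\F_{\min})$ (shifted by $\mu_{\min}=0$) to the $E_1$-page of the associated spectral sequence. Under the evenness assumption, \cref{Morse_Bott_decomposition_theorem_for_moment_map} forces every $H^*(\F_\alpha)[-\mu_\alpha]$ to sit in even degrees; all higher spectral-sequence differentials alter total parity, so they must vanish. Therefore $H^*(\F_{\min})$ survives to $E_\infty$, and no higher-stratum generator can cancel it.

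To conclude, I would identify this embedded copy with the image of $c_\lambda^*|_{H^*(\F_{\min})}$. The PSS/continuation map respects the action filtration: on each Morse–Bott stratum it agrees with the inclusion of local Floer cohomology modulo corrections supported on strata of strictly larger action. For $\F_{\min}$ no such corrections exist, so $c_\lambda^*|_{H^*(\F_{\min})}$ is injective, yielding $\FF_\lambda\cap H^*(\F_{\min})=\{0\}$. Specialising $T\to 0$ in \eqref{Specialised filtration} and invoking \eqref{EqnFrankelIntro} then gives the claimed inclusion $\FF_{\mathbb{B},\lambda}\subset\bigoplus_{\alpha\neq\min}H^*(\F_\alpha;\mathbb{B})[-\mu_\alpha]$. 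The delicate point is the compatibility of the PSS map with the Morse–Bott stratification and the associated transversality setup: one needs to know that a small perturbation used for Floer regularity can be supported away from $\F_{\min}$ without spoiling the action separation. This is the technical input that must be imported from the cascade machinery developed in \cite{RZ1} for symplectic $\CC$-manifolds.
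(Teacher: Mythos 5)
First, a point of comparison: the paper itself does not prove this proposition; Section~\ref{sec:Floer} explicitly recalls it from \cite{RZ1}, so there is no in-paper argument to measure yours against. Judged on its own terms, your sketch assembles the expected ingredients: for $\lambda<\lambda_{\min}$ every weight $w$ of $\F_{\min}$ (all non-negative, since $\mu_{\min}=0$) satisfies $\lambda w<1$, so $H_\lambda$ has no non-constant $1$-periodic orbits near $\F_{\min}$ and, consistently with \eqref{Floer_grading_of_F_alphas_in_HF*(H_lambda)}, the Floer index $\mu_\lambda(\F_{\min})$ still equals $\mu_{\min}$; the moment-map/action filtration places $\F_{\min}$ at an extreme; and the continuation map is filtration-preserving with identity leading term on that stratum. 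That is the right shape of argument.

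The gap is in the step ``all higher spectral-sequence differentials alter total parity, so they must vanish, therefore $H^*(\F_{\min})$ survives to $E_\infty$.'' Parity only disposes of differentials between the fixed-component columns, which are indeed all even under the hypothesis via \eqref{EqnFrankelIntro}. But for $\lambda<\lambda_{\min}$ the complex computing $HF^*(H_\lambda)$ can already contain generators coming from outer Morse--Bott manifolds $B_{T,\b}$ with $T\le\lambda$: the threshold $\lambda_{\min}$ is computed from the weights of $\F_{\min}$ alone, while the outer periods are governed by weights at all fixed components and can be smaller. (In the {\Pa} I case $\lambda_{\min}=1/3$ while the outer period $1/6$ is already active; see \cref{MI_sp_seq}.) The $B_{T,\b}$ are $S^1$-bundles, hence contribute classes of both parities, and a degree-one differential from the even column $H^*(\F_{\min})$ into the odd part of such a column is not excluded by parity --- indeed exactly such a differential is what kills $1\in H^0(\F_{\min})$ once $\lambda$ reaches $\lambda_{\min}$. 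The whole content of the proposition is that this cancellation cannot occur for periods below $\lambda_{\min}$, and your proof does not supply a reason: the ``technical input'' you defer to \cite{RZ1} is located in PSS transversality, whereas the missing step is an index or action/maximum-principle argument separating the $\F_{\min}$-generators from the outer orbits below the threshold. Until that is supplied, injectivity of $c_\lambda^*$ on $H^*(\F_{\min})$ does not follow from what you have written.
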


Moreover, there is a statement describing when the unit
$1\in H^0(\F_{\min})=H^0(Y)$ enters the filtration (i.e. smallest $\lambda$ 
such that $1\in \FF_\lambda$), in the setup of \cref{omega_C_pairing}:

\begin{prop}\label{unit entering for weight-1 and weight-2 SHS} When $Y$ has an $I$-holomorphic symplectic structure $\Om_I$ of 
either 
\begin{enumerate}
    \item weight-1
    \item weight-2 and $\F_\min=\{\text{point} \}$
\end{enumerate}
the unit $1\in H^0(Y)$ enters the filtration \textit{precisely} at $\lambda=2$ for (1), and at $\lambda=1$ for (2).
\end{prop}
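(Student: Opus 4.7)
The plan is to combine the constraints on the tangent weights at $\F_\min$ coming from \cref{omega_C_pairing} with Morse--Bott--Floer computations in the linear model of the action near $\F_\min$. As a preparatory step, one observes that \cref{Morse_Bott_decomposition_theorem_for_moment_map} forces all weights on $TY|_{\F_\min}$ to be non-negative, with the weight-$0$ piece equal to $T\F_\min$, and then the non-degenerate pairing $\Om_I \colon H_k \times H_{s-k} \fun \C$ constrains the non-zero weights sharply. In case (1), $s=1$ forces weights to lie in $\{0,1\}$ with $H_1 \iso (T\F_\min)^{\vee}$, so $\F_\min$ is automatically a Lagrangian submanifold (in agreement with \cite{vzivanovic2022exact}). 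In case (2), the assumption $\F_\min=\{\mathrm{pt}\}$ gives $H_0=0$, and the pairing $H_k \iso H_{2-k}^{\vee}$ then forces all weights to equal $1$. In both cases the maximal absolute weight at $\F_\min$ equals $1$, hence $\lambda_\min=1$, so by \cref{Cor intro about Fmin surviving} the unit survives the filtration for every $\lambda<1$.

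For case (2), the proof reduces to a computation in the linear model: a neighbourhood of $\F_\min$ is $\CC$-equivariantly modelled by $\C^n$ with the diagonal weight-$1$ action ($n=\dim_\C Y$). Crossing the first period $p=1$ introduces a single Morse--Bott family of non-constant periodic orbits diffeomorphic to $S^{2n-1}$. In this standard model the Morse--Bott Floer differential takes the top-degree generator of the orbit family to the generator at $\F_\min$ (the usual computation making $SH^*(\C^n)=0$ immediately past the first period). By naturality of continuation maps and an equivariant Darboux neighbourhood, this cancellation transfers to $Y$, yielding $c_1^*(1)=0$, hence $1\in\FF_1^{\Fi}$.

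For case (1), the local model near $\F_\min$ is the holomorphic cotangent bundle $T^*\F_\min$ with the weight-$1$ fibre scaling. The periodic orbits entering at the first period $p=1$ form a Morse--Bott family parameterised by $\F_\min$ (the zero-section) together with a circle factor coming from fibre rotation. The key claim is that the resulting Morse--Bott contribution does \emph{not} reach the unit in $H^0(\F_\min)$: at action level $1$ the local Floer complex is identified via a PSS-type map with Morse chains on the length-$1$ loops in $\F_\min$, landing in a subspace complementary to the constant-loop class representing the unit. Thus $1\notin\FF_\lambda^{\Fi}$ for $1\leq\lambda<2$. At the second period $\lambda=2$ a new Morse--Bott family of period-$2$ orbits appears, and the analogous local computation produces a Floer differential whose output includes $1$, giving $1\in\FF_2^{\Fi}$.

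The main obstacle is the non-cancellation step in case (1): rigorously controlling the Morse--Bott moduli of cascades between the constant orbits along $\F_\min$ and the period-$1$ family is subtle precisely because of the positive-dimensional fixed locus. The cleanest route is to carry out the Floer computation entirely in the local model $T^*\F_\min$ using known structural results on the action filtration of $SH^*(T^*\F_\min)$, and then to transfer the conclusion to $Y$ by naturality of continuation maps, exploiting the fact (from \cref{lemma_core_is_the_central_fibre}) that the Floer-theoretic content concentrates near the compact core $\Core(Y)$.
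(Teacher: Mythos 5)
This proposition is imported from \cite{RZ1,RZ2} and the paper gives no proof of it, so I can only assess your argument on its own terms. Your preparatory step is correct and is exactly the right reduction: minimality of $\F_\min$ forces non-negative weights (\cref{Morse_Bott_decomposition_theorem_for_moment_map}), and the pairing of \cref{omega_C_pairing} pins them to $\{0,1\}$ with $H_1\iso (T\F_\min)^{\vee}$ in case (1), resp.\ all equal to $1$ in case (2); hence $\lambda_{\min}=1$ and \cref{Cor intro about Fmin surviving} gives survival of the unit for $\lambda<1$ (you should record that this proposition assumes $H^*(Y)$ is even). The difficulties are all in the two Floer computations. In case (2) the mechanism is right, but the transfer step is a genuine gap: Floer differentials and continuation maps on $Y$ count global cylinders, and an equivariant Darboux chart plus ``naturality'' does not reduce them to the model $\C^n$. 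What is needed is the action/energy filtration (\cref{Theorem spectral seq}) together with an argument that the $d_1$ out of the $S^{2n-1}$-family of period-$1$ orbits actually reaches $1\in H^0(Y)$ and is not cancelled by other columns of period $\leq 1$. Also, \cref{lemma_core_is_the_central_fibre} does not say that ``the Floer-theoretic content concentrates near the core''; it only identifies $\Core(Y)$ with $\Psi^{-1}(0)$.

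Case (1) is where the proposal goes wrong in substance. In the local model $T^*\F_\min$ the Hamiltonian is the moment map of the \emph{fibre rotation}, so the period-$1$ orbits are the fibre circles, sweeping out the sphere bundle $S(H_1)\to\F_\min$; they are not ``length-$1$ loops in $\F_\min$'', and Viterbo-type structural results on $SH^*$ of real cotangent bundles with kinetic Hamiltonians are not applicable here. The correct local reason that no killer of the unit appears at period $1$ is a degree count on this sphere bundle via the Gysin sequence: the total-degree $-1$ part of that column is $H^{n-1}(\F_\min)\oplus\ker\bigl(e(H_1)\cup\colon H^0(\F_\min)\to H^{n}(\F_\min)\bigr)$, which vanishes because $H^{\mathrm{odd}}(\F_\min)=0$ and $\chi(\F_\min)\neq 0$. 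More seriously, proving survival for $1\leq\lambda<2$ requires excluding killers from \emph{every} orbit family of period $<2$, including those supported away from $\F_\min$ (e.g.\ the columns $B_1$ and $B_{3/2}$ in the Painlev\'e VI example, where the paper explicitly notes the spectral sequence cannot decide which column annihilates $1$). A strategy localized near $\F_\min$ cannot see these contributions at all, so one needs a global input --- in \cite{RZ1} this is the $QH^*$-module structure of the continuation maps (the reason $\FF_\lambda$ is an ideal), which reduces the question to the image of the unit. As written, your argument establishes the easy half ($1\notin\FF_\lambda$ for $\lambda<1$) but not the precise entry points claimed.
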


On the other hand, there are lower bounds for ranks of the filtration, which are readily computable, using only the Betti numbers $b_k (\F_\a)$ of the fixed loci, and their weight decompositions \eqref{weight decomposition}. Introduce Floer-theoretic indices
\begin{equation}\label{Floer_grading_of_F_alphas_in_HF*(H_lambda)}
\mu_\lambda(\F_\a) := \dim_\C Y - \dim_\C \F_\a - \sum_{k} \dim_{\C}(H_k) W(\lambda k), 
\end{equation}
where 
\begin{equation}\label{WfunctionForRSAppendix}
W :\R \to \Z, \quad
	W(x) := \left\{
	\begin{array}{ll}
	2 \lfloor x \rfloor + 1 & \text{if} \ x \notin \Z \\
	2x & \text{if}  \ x \in  \Z.
	\end{array}
	\right. 
	\end{equation}
 is a step-function. We have the following:

\begin{prop}\label{lower_bounds_filtration}
$    \rk\, \FF_{\lambda} (H^k(Y)) \geq \sum_\a b_{k-\mu_\a} (\F_\a)-b_{k-\mu_\lambda(\F_\a)}(\F_\a).$
\end{prop}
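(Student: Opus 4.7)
The plan is to leverage the defining short exact sequence of the filtration together with Morse--Bott computations on both ends. By \eqref{filtration_definition}, $\FF_\lambda$ is the kernel of $c_\lambda^*$, so in each cohomological degree $k$ we have
\begin{equation*}
\rk \FF_\lambda(H^k(Y)) = \dim H^k(Y) - \rk\bigl(c_\lambda^*|_{H^k(Y)}\bigr) \geq \dim H^k(Y) - \dim HF^k(H_\lambda).
\end{equation*}
The first term is immediate from \Cref{Morse_Bott_decomposition_theorem_for_moment_map}: Betti numbers of the Frankel-type decomposition \eqref{EqnFrankelIntro} give $\dim H^k(Y) = \sum_\a b_{k-\mu_\a}(\F_\a)$. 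So the proposition reduces to the upper bound
\begin{equation*}
\dim HF^k(H_\lambda) \leq \sum_\a b_{k-\mu_\lambda(\F_\a)}(\F_\a).
\end{equation*}

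To produce this Floer-side bound, I would recall that since $\lambda \notin \P$ and $H_\lambda$ is linear with slope $\lambda$ in $H$ outside a compact set, the $1$-periodic orbits relevant for $HF^*(H_\lambda)$ come, after a $C^2$-small Morse perturbation near the fixed locus $\F$, from the constant orbits at the fixed points of $\Fi$, organized by the components $\F_\a$. The Morse--Bott spectral sequence for Hamiltonian Floer cohomology (set up in this context in \cite{RZ1}) then has $E_1$-page
\begin{equation*}
E_1^{\,\a, k} = H^{k-\mu_\lambda(\F_\a)}(\F_\a),
\end{equation*}
where the shift is the Robbin--Salamon/Conley--Zehnder index of the constant orbit at any point of $\F_\a$. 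That index is computed from the weight decomposition \eqref{weight decomposition}: the normal bundle splits as $\bigoplus_k H_k$ with $H_k$ rotating by angle $2\pi\lambda k$ under the linearized Hamiltonian flow, and summing their individual contributions $-\dim_\C H_k \cdot W(\lambda k)$ together with a dimension shift reproduces exactly \eqref{Floer_grading_of_F_alphas_in_HF*(H_lambda)}. Since $E_\infty$ is a subquotient of $E_1$, summing Betti numbers gives the claimed upper bound for $\dim HF^k(H_\lambda)$; subtraction then yields the proposition.

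The main obstacle, which I would delegate to \cite{RZ1}, is the actual construction of the Morse--Bott spectral sequence computing $HF^*(H_\lambda)$ in the presence of possibly positive-dimensional fixed components $\F_\a$: transversality, compactness, and orientation signs near the Morse--Bott cascades are all nontrivial. A secondary issue is to verify that the Maslov-type index calculation for the linear flow indeed gives the step-function $W$ appearing in \eqref{WfunctionForRSAppendix}, as opposed to a half-integer or shifted variant; this is a direct but delicate accounting of how $W$ interpolates between integer and non-integer slopes, and matches the convention adopted in \cite{RZ1}. Everything else in the argument is a formal consequence of rank additivity applied to $\ker c_\lambda^*$.
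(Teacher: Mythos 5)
The paper itself gives no proof of this statement --- it is recalled from \cite{RZ1} --- so I am judging your argument on its own terms against the intended one. Your overall scheme is right and is the natural (and, I believe, the intended) one: rank--nullity for $\ker c_\lambda^*$, the Frankel-type decomposition of \cref{Morse_Bott_decomposition_theorem_for_moment_map} giving $\dim H^k(Y)=\sum_\a b_{k-\mu_\a}(\F_\a)$, and a Morse--Bott--Floer generator count bounding the target of $c_\lambda^*$.

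The one step that is not correct as written is the assertion that the $1$-periodic orbits of $H_\lambda$ are only the constants at the fixed locus. For the Hamiltonians the paper actually describes --- linear of slope $\lambda$ with respect to $H$ only \emph{outside a compact set}, i.e.\ of the form $c(H)$ with $c$ convex and $c'$ ramping up to $\lambda$ --- there are in addition the non-constant orbit manifolds $B_{T,\b}=Y_{m,\b}\cap\{c'(H)=T\}$ for every outer period $T<\lambda$. These are precisely the manifolds that populate the negative columns of the spectral sequence in \cref{Theorem spectral seq}, and they contribute extra $E_1$-generators in the same degrees (e.g.\ for $\M^{PI}$ at $\lambda$ just above $1/6$ the circle $B_{1/6}$ contributes in degrees $0$ and $1$), so the inequality $\dim HF^k(H_\lambda)\le\sum_\a b_{k-\mu_\lambda(\F_\a)}(\F_\a)$ does \emph{not} follow from a generator count in that model. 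The fix is to compute $HF^*(H_\lambda)$ with the globally linear Hamiltonian $\lambda H$: its time-one flow is the action of $e^{2\pi i\lambda}\in S^1$, and a point with stabilizer $\Z/m$ is fixed by it only when $\lambda\in\tfrac1m\Z\subset\P$, so for $\lambda\notin\P$ the only $1$-periodic orbits are the constants at the $\F_\a$, with Robbin--Salamon shift $\mu_\lambda(\F_\a)$; your subquotient argument then goes through. Since the two Hamiltonians have the same slope at infinity, their Floer cohomologies are identified by continuation maps compatible with $c_\lambda^*$, so your conclusion stands --- but you must say in which model you are counting orbits, because in the ramped model the claimed bound fails already at the level of the $E_1$-page. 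The remaining points (the index computation reproducing the step function $W$, and delegating transversality and orientations of the Morse--Bott setup to \cite{RZ1}) are fine.
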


The statements mentioned so far completely determine the filtration in the case of {\Pa} spaces. However, in order to determine the filtration in general, one has to use the {\MBF} spectral sequences defined in \cite{RZ2}. Let us briefly describe this method.
% Recall the weight decomposition \eqref{weight decomposition}.
% Given $\F_\a,$ its weight decomposition \eqref{weight decomposition} 
% maps locally via equivariant tubular neighbourhood to the space $Y.$ 
% {\FZZ}
Given an integer $m>1$, observe % only the weights $k>0$ whose image apart from $\F_\a$ is outside of $\Core(Y).$ Those the \textbf{outer torsion manifolds}, fixed by a 
the fixed locus $Y_m:=Y^{\Z/m}$ of the cyclic subgroup $\Z/m \leq \CC$.
%$\Z/k\leq \C$. 
It is an $I$-{\ph} symplectic submanifold that breaks into a finite number of connected components, out of which we label
by $\{Y_{m,\b}\}_{\b}$ the non-compact ones, or equivalently those that go out of the $\Core(Y).$\footnote{recall the definition of core in \eqref{equation_of_the_core}}
We call them \textbf{outer torsion submanifolds}, as they have torsion isotropy subgroups of the $\C^*$-action.
Denote their intersections with 
%and the ordinary cohomologies of their 
energy hypersurfaces by %intersected with 
\begin{equation}\label{Morse_Bott_submanifolds_of_orbits}
    B_{T,\b}:=Y_{m,\b} \cap \{c'(H)=T\}.
\end{equation}
%of a given convex function $c:\R\fun\R$ 
%yield the entries of $E_1$-page of the aforementioned spectral sequence.
Here, $c=c(H)$ is a real convex function and
$T=k/m, k\in \N$ captures periods of orbits of the $S^1$-action.
These periods we call the \textbf{outer $S^1$-periods} as these $S^1$-orbits are outside of the core. 
From the construction of a specific family of \eqref{filtration_definition} Hamiltonians $H_\lambda$ in \cite{RZ2}, they conclude:
\begin{thm}\label{filtration_stability_theorem}
    $\FF_{\lambda}^{}=\FF_{\lambda'}^{}$ if there are no outer 
    $S^1$-periods in the interval $(\lambda,\lambda'].$
\end{thm}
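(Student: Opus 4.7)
The plan is to show that for $\lambda < \lambda'$ not separated by an outer $S^1$-period, the continuation map $c_{\lambda,\lambda'}^*\colon HF^*(H_\lambda) \to HF^*(H_{\lambda'})$ is an isomorphism. Since continuation maps compose, one has $c_{\lambda'}^* = c_{\lambda,\lambda'}^* \circ c_\lambda^*$, so once $c_{\lambda,\lambda'}^*$ is shown to be injective, the identification $\FF_\lambda = \ker c_\lambda^* = \ker c_{\lambda'}^* = \FF_{\lambda'}$ follows at once from definition \eqref{filtration_definition}.

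First I would recall the specific family of Hamiltonians from \cite{RZ2}: $H_\lambda = c_\lambda \circ H$, where $c_\lambda\colon \R \to \R$ is a convex function that agrees with a fixed small Morse perturbation of $H$ on a neighborhood of the core region and is affine of slope $\lambda$ outside a compact interval $I_\lambda \subset \R$. The $1$-periodic orbits of $X_{H_\lambda}$ then split into constants at critical points of $H$ (which lie in $\Core(Y)$ and are independent of $\lambda$) and non-constant orbits. Each non-constant $1$-periodic orbit is a reparametrization of an $S^1$-orbit of $\Fi$ whose period $T$ satisfies $T = c_\lambda'(H(y))$ along the orbit.

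Next I would arrange $c_\lambda$ and $c_{\lambda'}$ to coincide on a large inner region, so that all non-constant orbits there match and contribute identically to both $HF^*(H_\lambda)$ and $HF^*(H_{\lambda'})$. The only possibly new generators arise from the linear region at infinity, where the period of the underlying $S^1$-orbit must equal the slope ($\lambda$ or $\lambda'$). By the definition in \eqref{Morse_Bott_submanifolds_of_orbits}, such orbits precisely live on the outer torsion submanifolds $Y_{m,\beta}$ and correspond to outer $S^1$-periods. The hypothesis that $(\lambda,\lambda']$ contains no such period then means that the Floer chain complexes of $H_\lambda$ and $H_{\lambda'}$ have the same set of non-constant generators, all of which, together with the constant generators, lie on submanifolds that do not vary as the slope sweeps the interval.

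Finally I would construct a monotone homotopy $(H_s)_{s \in [\lambda,\lambda']}$ through convex reshapings and conclude that the induced continuation map is a bijection on generators. The required $C^0$-control on Floer trajectories is supplied by a maximum-principle argument for the $(I,i)$-holomorphic equivariant map $\Psi\colon Y \to \mathcal{B}$, whose $\C^*$-action has positive weights and therefore plays the role of a symplectic convexity structure at infinity. With all $1$-periodic orbit families varying compactly and without bifurcations along the homotopy, the continuation map is an isomorphism, as required. The main obstacle will be the full bifurcation analysis of non-constant orbit families in the outer region along the homotopy, namely certifying that any change in the set of Floer generators is detected by a crossing of an outer period $T = k/m$ for some component $Y_{m,\beta}$; this is precisely where the careful slope-labelled Morse--Bott--Floer machinery of \cite{RZ2} and the convex base assumption become essential.
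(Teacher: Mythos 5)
The paper itself contains no proof of this statement---it is imported verbatim from \cite{RZ2} as a consequence of the specific construction of the Hamiltonians $H_\lambda$ there---and your sketch is exactly the standard continuation-map argument underlying that construction: the non-constant $1$-periodic orbits of $H_\lambda$ sit on the loci $B_{T,\b}$ with $T$ an outer $S^1$-period not exceeding the slope, so a monotone homotopy from slope $\lambda$ to slope $\lambda'$ that crosses no outer period creates no new generators and induces an isomorphism $HF^*(H_\lambda)\cong HF^*(H_{\lambda'})$ commuting with the maps $c_\lambda^*$, whence $\ker c_\lambda^*=\ker c_{\lambda'}^*$. Your reduction to injectivity of $c_{\lambda,\lambda'}^*$ is correct, and the one case your homotopy does not literally cover---$\lambda$ itself being an outer period, which the half-open interval $(\lambda,\lambda']$ permits---is handled by the convention $\FF_{p}=\cap_{p<\mu}\FF_{\mu}$.
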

In other words, $\FF_{\lambda}^{\Fi}$ can only jump when $\lambda$ passes through an outer $S^1$-period. 
Finally, we have a theorem on the aforementioned spectral sequence:
\begin{theorem}\label{Theorem spectral seq}
Let $Y$ be a symplectic $\C^*$-manifold over convex base satisfying $c_1(Y)=0.$
There is a spectral sequence of $\k$-modules 
$E_{r}^{pq}(\Fi) \Rightarrow 0$ 
%converges to zero 
whose 
$E_1$-page is
\begin{equation}\label{spectral sequence}
 %E_{r}^{pq}(\Fi) \Rightarrow SH^*(Y,\Fi), 
 %\textrm{ } 
 E_{1}^{pq}(\Fi)= 
 \begin{cases}
				H^q(Y;\k) & \text{if } p=0\textrm{ and }\\
				\bigoplus_{\b} H_{p,\b}^*[-\mu(B_{T_p,\b})] & \text{if } p \leq 0 \\ 
				0 & otherwise.
\end{cases}
\end{equation}
This spectral sequence determines the filtration: a class $x \in H^*(Y;\k)=QH^*(Y)$ 
lies in $\FF^{\Fi}_{\lambda}$ 
iff the columns of the spectral sequence %, given by 
%$\oplus_{\b} H_{p,\b}^* [ -\mu(B_{T_b,\b} ) ]$  
having $T_p \leq \lambda$ 
annihilate $x\in E_1^{0,*}=H^*(Y;\k)$ via some differential $\operatorname{d}_r$ of the spectral sequence $E_{r}^{pq}(\Fi)$. 
\end{theorem}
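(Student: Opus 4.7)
The plan is to build the spectral sequence from the telescope of the directed system $\{HF^*(H_\lambda)\}_{\lambda\in\R\setminus\P}$ and to read off the filtration $\FF^{\Fi}_\lambda=\ker(c_\lambda^*)$ directly from its differentials. First I would fix a cofinal sequence $\lambda_0<\lambda_1<\cdots$ of slopes avoiding $\P$, chosen so that each interval $(\lambda_{i},\lambda_{i+1}]$ contains exactly one outer $S^1$-period $T_p$, and assemble the continuation maps $c:HF^*(H_{\lambda_i})\to HF^*(H_{\lambda_{i+1}})$ into a Floer-theoretic mapping telescope. The increasing filtration by truncated telescopes yields a spectral sequence whose $E_1$-columns are the cones of successive continuation maps. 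Because $Y$ is a symplectic $\CC$-manifold over a convex base and $c_1(Y)=0$, the standard vanishing $SH^*(Y)=0$ (a consequence of the contracting action moving all nonconstant orbits out to infinity) gives that the direct limit of the telescope is $0$, so $E_r^{pq}(\Fi)\Rightarrow 0$ with integer grading well-defined.

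Next I would identify the $E_1$-page. For $p=0$, picking $\lambda_0<\min\P$ so small that the only $1$-periodic orbits of $H_{\lambda_0}$ are the constant ones on the core, the PSS isomorphism gives $HF^*(H_{\lambda_0})\cong QH^*(Y)=H^*(Y)$. For $p<0$, by \cref{filtration_stability_theorem} the continuation map $HF^*(H_{\lambda_{|p|}})\to HF^*(H_{\lambda_{|p|+1}})$ is an isomorphism unless the interval crosses an outer period $T_p$, in which case the new $1$-periodic orbits lie on the outer torsion submanifolds $Y_{m,\b}$ and, after intersecting with the energy hypersurface $\{c'(H)=T_p\}$, form the Morse--Bott families $B_{T_p,\b}$ of \eqref{Morse_Bott_submanifolds_of_orbits}. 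A Morse--Bott--Floer perturbation (cascades along Morse functions on each $B_{T_p,\b}$) identifies the cone with $\bigoplus_\b H^*(B_{T_p,\b})[-\mu(B_{T_p,\b})]$, where the shift is the Robbin--Salamon/Conley--Zehnder index read off from the weight decomposition \eqref{weight decomposition} on the normal bundle of the torsion submanifold.

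For the characterization of $\FF^{\Fi}_\lambda$, I would unwind the definition. A class $x\in H^*(Y)=E_1^{0,*}$ lies in $\FF^{\Fi}_\lambda$ iff $c_\lambda^*(x)=0\in HF^*(H_\lambda)$, and $c_\lambda^*$ factors as the composition of the continuation maps up to slope $\lambda$. In the telescope model, $c_\lambda^*(x)=0$ is equivalent to $x$ being a boundary in the truncated telescope containing all columns with $T_p\leq\lambda$, which translates on the spectral sequence to the existence of some $r$ and some column with $T_p\leq\lambda$ carrying a differential $\operatorname{d}_r$ that hits (equivalently annihilates) the class represented by $x$. Conversely, any such differential produces a bounding chain in the truncated telescope, so the two conditions are equivalent. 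This gives precisely the stated ``iff'' and extracts the full filtration from the $E_r$-pages.

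The hard part is the $E_1$-identification together with the matching of telescope differentials with the Floer-analytic continuation. Concretely, one must choose a one-parameter family of Hamiltonians $\{H_\lambda\}$ so that as $\lambda$ crosses an outer period $T_p$, the new $1$-periodic orbits appear as a clean Morse--Bott birth along $\sqcup_\b B_{T_p,\b}$, with no spurious orbits coming from inside $\Core(Y)$ (here the contracting assumption and compactness of $\F$ are essential), and one must ensure that the Morse--Bott cascade moduli are regular and that the resulting index shift agrees with the formula $\mu(B_{T_p,\b})$ arising from \eqref{Floer_grading_of_F_alphas_in_HF*(H_lambda)}. The telescope convergence to $0$ and the derivation of the filtration from the differentials, by contrast, are formal once this geometric input is in place.
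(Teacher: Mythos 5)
This theorem is imported from \cite{RZ2}: the paper states it without proof, so your proposal can only be measured against the construction in that reference, which it does track in outline (a mapping telescope over a cofinal sequence of slopes, vanishing of the direct limit, Morse--Bott columns appearing at the outer $S^1$-periods, and the filtration read off from the differentials). Within that outline there are two genuine issues. First, you identify each new column directly with $\bigoplus_\b H^*(B_{T_p,\b})[-\mu(B_{T_p,\b})]$ via a cascade model, but the theorem deliberately states the $E_1$-entries as local Floer groups $H_{p,\b}^*$: the identification you invoke is itself only the $E_1$-page of a further local energy spectral sequence $\oplus_\b H^*(B_{T_p,\b})[-\mu(B_{T_p,\b})]\Rightarrow \oplus_\b H_{p,\b}^*[-\mu(B_{T_p,\b})]$, which the paper notes collapses only in low dimensions $\dim_\C Y\leq 2$. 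So as written your $E_1$-page proves a strictly stronger statement than \eqref{spectral sequence}, one that is not true in the generality of the theorem (an arbitrary symplectic $\C^*$-manifold over a convex base); the correct move is to stop at the local Floer groups and defer the cascade computation to the separate collapsing statement.

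Second, the convergence $E_r^{pq}(\Fi)\Rightarrow 0$ rests on $\varinjlim_\lambda HF^*(H_\lambda)=0$. In this setting that vanishing is a substantive theorem of \cite{RZ1} (using positivity of the weights of the linear action on the base and $c_1(Y)=0$ to get the $\Z$-grading and the eventual acceleration of all classes), not a soft consequence of ``nonconstant orbits moving out to infinity''; your sketch should cite or reprove it rather than treat it as formal. The remaining step --- unwinding the ``iff'' between $c_\lambda^*(x)=0$ in \eqref{filtration_definition} and $x$ being annihilated by a differential emanating from a column with $T_p\leq\lambda$ --- is correct for a half-plane spectral sequence once the truncated telescope is identified with $HF^*(H_\lambda)$, and it is exactly here that the specific family of Hamiltonians behind \cref{filtration_stability_theorem} is needed to ensure no new orbits appear except at the outer periods; you correctly flag this as the hard geometric input.
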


In the spectral sequence above, the periods $T_p$ \textit{increase as $p$ decreases,}
and $\mu(B_{T_p,\b})$ are again certain Floer-theoretic indices, which we can compute explicitly:
\begin{equation}\label{CZindecesOfMorseBottSumbanifolds}\textstyle
\begin{split}
  \mu(B_{T_p,\b}) & = \textstyle \dim_{\C} Y - \frac{1}{2}\dim_{\R} B_{T_p,\b}-\frac{1}{2}- \sum_i W(T_p w_i)\\ & \textstyle = \codim_{\C}\, Y_{m,\b} - \sum_i W(T_p w_i),
\end{split}
\end{equation}
 where $w_i$ are the weights 
of the convergence point\footnote{recall \eqref{equation_convergence_point}} $y_{0}$ of an arbitrary $y\in B_{T_p,\b}.$ 
In general, there is another Floer-theoretic local \textit{energy spectral sequence} 
$$\oplus_\b H^*(B_{T_p,\b};\k)[-\mu(B_{T_p,\b})] \Rightarrow \oplus_\b H_{p,\b}^*[-\mu(B_{T_p,\b})]$$
which collapses in low dimensions $\dim_\C(Y)\leq 2,$ in particular for all spaces considered in this paper, so computing \eqref{spectral sequence} amounts to computing ordinary cohomologies $H^*(B_{T_p,\b};\k),$ and their shifts $\mu(B_{T_p,\b})$. % as we will show in the examples of {\Pa} spaces.
In practice, one computes these by knowing the topology of fixed loci $\F_\a$ and their weight decompositions \eqref{weight decomposition}, as each $Y_{m,\b}$ 
is obtained via the $\C^*$-flow from (one or more) fixed $\F_\a,$ where it converges (when $t\fun 0$) to the ($\Z/m$-fixed) part $$T_{\F_\a} Y_{m,\b} =\oplus_{b\in \Z} H_{mb}$$ 
of the weight decomposition. From this information, one can compute 
$$H^*(Y_{m,\b})=\oplus_{\F_\a \subset Y_{m,\b}} H^*({\F_\a})[-\mu_\a']$$ 
from the same formula as \eqref{EqnFrankelIntro}, where $\mu_\a'$ are the {\MB} indices of $\F_\a$ under the restriction $H|_{Y_{m,\b}}$, or equivalently 
dimension of negative weight space of $\F_\a$ within $Y_{m,\b}$.
%on the right side one sums 
%over $\F_\a \subset Y_{m,\b}$. 
Then, the cohomology of the hypersurface $B_{T_p,\b}$ can be computed
in terms of the cohomology of $Y_{m,\b}$ and its intersection form by \cite[Proposition 6.14]{RZ2}:\footnote{The conditions of the ambient cohomology support up to mid-degree can be lifted, with a caveat that we then know cohomology groups of hypersurface up to some mid-degree part, which gets bigger with the top degree of support.}
%A particular case, of interest for this paper, is 

\begin{prop}\label{hypersurface_cohomology}
Assuming that $H^*(Y)$ is supported in degrees $*\leq n=\dim_\C Y,$ %where
%$\Core(Y)$ is half-dimensional in $Y$, 
the cohomology of energy hypersurface $B=\{H=c\}\subset Y$ of a proper function $H$ and $c \gg 0$ 
%is given 
is given by 
% $$b_i(B)=b_i(Y),\  0\leq i <n-1 $$
% $$b_{n-1}(B)=b_n(B)=b_{n-1}(Y)+\dim \ker(\Fi)$$

$$ 
                H^k(B) \iso    \begin{cases}
			H^{2n-1-k}(Y) & \ k\geq n+1,\\
            H^{n-1}(Y)\oplus \ker \Fi  & \ k=n,\ n-1,\\
			H^k(Y) & \ k\leq n-2. 
                          \end{cases}
$$
where $\Fi:H_n(Y)\times H_n(Y) \fun \k$ is the intersection form. 
%{\FZ get back to this}
\end{prop}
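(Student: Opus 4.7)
The plan is to realize $B$ as the boundary of the compact real $2n$-manifold-with-boundary $W := \{H \le c\} \subset Y$ and to read off $H^*(B)$ from the long exact sequence of the pair $(W,B)$ combined with Poincar\'e--Lefschetz duality. Since $\Core(Y)$ is compact one has $\Core(Y) \subset W$ for $c \gg 0$, and standard Morse theory for $H$ retracts $Y$ onto $W$ and $W$ onto $\Core(Y)$; hence $H^*(W) \cong H^*(Y)$, which by hypothesis is concentrated in degrees $\le n$. Lefschetz duality with field coefficients then gives $H^k(W,B) \cong H_{2n-k}(W) \cong H^{2n-k}(Y)^{\vee}$, concentrated in degrees $k \ge n$.

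Two of the three ranges in the statement fall out immediately. For $k \le n-2$, both $H^k(W,B)$ and $H^{k+1}(W,B)$ vanish (their duals sit in degrees $\ge n+1$), so the long exact sequence of the pair yields $H^k(B) \cong H^k(W) \cong H^k(Y)$. For $k \ge n+1$, both $H^{k-1}(W)$ and $H^k(W)$ vanish, so it yields $H^k(B) \cong H^{k+1}(W,B) \cong H^{2n-1-k}(Y)$. The remaining content is concentrated in the middle six-term fragment
\[
 0 \to H^{n-1}(Y) \to H^{n-1}(B) \to H^n(W,B) \xrightarrow{j^*} H^n(W) \to H^n(B) \to H^{n+1}(W,B) \to 0,
\]
in which $H^{n+1}(W,B) \cong H^{n-1}(Y)^{\vee}$, so the computation reduces to identifying $\ker(j^*)$ and $\operatorname{coker}(j^*)$.

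The crucial (and, I expect, most delicate) step is to identify $j^*$ with the intersection form $\Fi$ on $H_n(Y) \cong H_n(W) \cong H^n(W,B)$. The point is that the cup-product pairing $H^n(W,B) \otimes H^n(W) \to H^{2n}(W,B) \cong \k$ is perfect by Lefschetz duality, and pre-composing its second factor with $j^*$ recovers precisely $\Fi$ on $H^n(W,B)$. Consequently $\ker(j^*) \cong \ker \Fi$, and the perfect pairing together with rank-nullity forces $\operatorname{coker}(j^*) \cong \ker \Fi$ as well. Feeding these into the six-term fragment produces the short exact sequences $0 \to H^{n-1}(Y) \to H^{n-1}(B) \to \ker \Fi \to 0$ and $0 \to \ker \Fi \to H^n(B) \to H^{n-1}(Y) \to 0$, which split over the field $\k$ to yield the claimed $H^{n-1}(B) \cong H^n(B) \cong H^{n-1}(Y) \oplus \ker \Fi$. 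The main obstacle will be cleanly setting up the cup-product identification of $j^*$ with $\Fi$ in a form from which the kernel/cokernel statements are immediate; once that is in hand, the rest is routine diagram chasing along the long exact sequence.
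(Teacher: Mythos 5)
Your argument is correct and is the standard one: the paper does not actually prove this statement but imports it from \cite[Proposition~6.14]{RZ2}, and your route via $W=\{H\le c\}$, the long exact sequence of $(W,B)$, Lefschetz duality, and the identification of $j^*\colon H^n(W,B)\to H^n(W)$ with the intersection form on $H_n(W)\cong H_n(Y)$ is exactly the expected proof. Only cosmetic points: for $k\ge n+1$ the vanishing you need is of $H^k(W)$ and $H^{k+1}(W)$ (not $H^{k-1}(W)$), and one should note in passing that $c\gg 0$ is a regular value of the proper, bounded-below moment map with all critical points in the compact core, so that $W$ is a compact oriented manifold with boundary $B$ and $Y\simeq W\simeq \Core(Y)$.
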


\begin{remark}
For $Y$ affine (e.g. character varieties), the condition of the proposition holds. 
When $\dim_{\C} Y = 2$, the lowest weight piece of the weight filtration in Deligne's Mixed Hodge Structure in these cohomological degrees has a similar description in terms of the kernel of the intersection form; see~\cite[Section~6.9]{ElZein_Nemethi},~\cite[Section~6]{Nemethi_Szabo}. 
\end{remark}

\section{Construction of $\C^*$-actions and filtration computation}\label{sec:Painleve}

{\Pa} Hitchin systems are certain complex $2$-dimensional integrable systems of stable irregular Higgs bundles of rank $2$ with underlying curve the complex projective line $\C P^1$. 
We will let $z$ stand for the standard coordinate on an affine $\C\subset \C P^1$. 
The notation $O(1)$ denotes polynomials of nonnegative degree in $z$. 
More generally, $O(z^k)$ denotes Laurent polynomials only containing terms of degree at least $k$ with respect to $z$. 
Throughout, we will denote by $K = \Omega_{\C P^1}^1$ the canonical line bundle of $\C P^1$, and by
\[
  \pi\colon\Tot(K)\to  \C P^1
\]
its total space. We let
\[
      \zeta \in H^0 (\Tot(K), \pi^* \Omega_{\C P^1}^1 )
\]
stand for the tautological Liouville $1$-form on the total space $\Tot(K)$. 
By the canonical inclusion of $1$-forms 
\[ 
  \pi^* \Omega_{\C P^1}^1 \subset \Omega_{\Tot(K)}^1 ,
\]
$\zeta$ may be thought of as a holomorphic $1$-form on $\Tot(K)$. 
Then, as it is well-known, 
\begin{equation}\label{eq:Liouville}
 \Omega_I = - \d \! \zeta \in H^0 (\Tot(K), \Omega_{\Tot(K)}^2 ) 
\end{equation}
is a non-degenerate closed holomorphic $2$-form on $\Tot(K)$.
In different terms, $\Omega_I$ defines a symplectic structure on $\Tot(K)$, called the canonical (or Liouville) symplectic structure. 

In order to define the {\Pa} Hitchin systems, one needs to choose further an effective divisor $D$ of length $4$ on $\C P^1$. 
We may and will choose it so that its point of support with the highest multiplicity is $z=0$. 
The {\Pa} Hitchin systems $\mathcal{M}^{PX}, X\in \{I, \ldots , VI\}$ arise as holomorphic symplectic moduli spaces of certain meromorphic Higgs bundles with polar divisor (bounded from above by) $D$, and with fixed polar part at $D$. 
The holomorphic symplectic form is called Atiyah--Bott $2$-form, and denoted by $\Omega_I^{PX}$. 
Partitions of $4$ and a further choice of the type of the polar part (semi-simple or with non-trivial nilpotent part) at each point of $D$ lead to various possibilities commonly referred to as the {\Pa} I, II, III, IV, V and VI cases (with some further labels attached to the cases II, III, IV and V, depending on the authors). 
We will denote the corresponding {\Pa} Hitchin systems by $\mathcal{M}^{PI},\ldots, \mathcal{M}^{PVI}$. 
According to~\cite[Theorem~A]{KLSSz}, for each $X$ there is a birational map 
\[
    f_X\colon  \mathcal{M}^{PX} \to \Tot(K)
\]
such that\footnote{Strictly speaking, this was done in the de Rham complex structure rather than the Dolbeault structure needed here; however, a straightforward modification of that proof gives the Dolbeault case too. See also \cite[Proposition~7.12]{Markman_Spectral_curves} for a related result in the Dolbeault case.}
\begin{equation}\label{eq:symplectomorphism}
    f_X^* \Omega_I = \Omega_I^{PX}. 
\end{equation}
In addition, they carry fibrations
\begin{equation}\label{eq:Hitchin_map}
 \H\colon \mathcal{M}^{PX}\to \mathcal{B}^{PX} \iso \C
\end{equation}
turning them into completely integrable systems whose generic fibers are abelian varieties. 
We construct equivariant $\C^*$-actions case by case in Sections~\ref{sec:PI}--\ref{sec:PVI}. 
In Section~\ref{sec:PIII-V} we show that in the cases not covered by our constructions, equivariant $\C^*$-actions do not exist. 
In Section~\ref{sec:E6} we treat the complex $2$-dimensional moduli spaces of rank $3,4$ and $6$ parabolic Higgs bundles over $\C P^1$ with three logarithmic singularities, corresponding to the root systems $\widetilde{E}_6, \widetilde{E}_7$ and $\widetilde{E}_8$ respectively. %{\FZ come back and change this after finishing that section}
From~\eqref{eq:Liouville} and~\eqref{eq:symplectomorphism}, it follows immediately that: 
\begin{proposition}
 For every $X$ and any $\C^*$-action on $\mathcal{M}^{PX}$ of the form $(z, \zeta )\mapsto (z^{w(z)}, \zeta^{w(\zeta )})$, the weight of the Atiyah--Bott holomorphic symplectic form $\Omega_I^{PX}$ agrees with $w(\zeta )$. 
\end{proposition}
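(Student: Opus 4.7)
The plan is to reduce the statement to a one-line differential calculation by invoking the identification of the Atiyah--Bott form with the Liouville form. By~\cite[Theorem~A]{KLSSz} (in the Dolbeault variant alluded to in the footnote), under the birational identification $\mathcal{M}^{PX}\dashrightarrow \Tot(K)$ the Atiyah--Bott $2$-form $\Omega_I^{PX}$ pulls back to the canonical Liouville form $\Omega_I = -\d\zeta$ of~\eqref{eq:Liouville}. Since the weight of a $2$-form is a birational invariant of a $\C^*$-equivariant birational map, it suffices to compute the weight of $-\d\zeta$ under the prescribed action.

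The hypothesis that the $\C^*$-action has the form $(z,\zeta)\mapsto (z^{w(z)},\zeta^{w(\zeta)})$ means, in pullback notation, that $t^{\ast}z = t^{w(z)} z$ and $t^{\ast}\zeta = t^{w(\zeta)}\zeta$ as sections of the relevant sheaves on $\Tot(K)$. Because the exterior derivative commutes with pullback, one obtains
\[
 t^{\ast}\Omega_I \;=\; -\,\d\bigl(t^{\ast}\zeta\bigr) \;=\; -\,\d\bigl(t^{w(\zeta)}\zeta\bigr) \;=\; t^{w(\zeta)} \Omega_I,
\]
so that $\Omega_I$ has weight $w(\zeta)$. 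Transporting back across the birational equivalence, the same weight is carried by $\Omega_I^{PX}$, which is the desired conclusion.

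The main (and essentially only) conceptual step is the appeal to~\cite[Theorem~A]{KLSSz}, whose proof lives outside the scope of this paper; everything else is a formal consequence of the naturality of $\d$ under pullback. In particular, no case-by-case discussion of the six {\Pa} cases is needed at this point, which is precisely why it is convenient to record this as a general proposition before carrying out the explicit constructions of Sections~\ref{sec:PI}--\ref{sec:PVI}, where the weights $w(z)$ and $w(\zeta)$ will be exhibited one case at a time.
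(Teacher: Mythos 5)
Your proposal is correct and is essentially the paper's own argument: the paper simply asserts that the proposition ``follows immediately'' from the identification of $\Omega_I^{PX}$ with the Liouville form $-\d\zeta$ via \cite[Theorem~A]{KLSSz}, which is exactly the reduction you make, followed by the same one-line computation $t^{\ast}(-\d\zeta)=-\d(t^{w(\zeta)}\zeta)=t^{w(\zeta)}\Omega_I$. Your write-up just makes explicit the naturality of $\d$ and the transport of the weight across the equivariant birational map, which the paper leaves implicit.
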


\subsection{Painlev\'e I case}\label{sec:PI}

Consider the divisor $D = 4 \cdot \{ 0 \}$ in $\C P^1$. 
The local section $\frac{\d\! z}{z^4}$ trivializes $K(D)$ over $\C$. 
In~\cite[Section~2.3.3]{ISS1} the following local form for a ramified irregular Higgs field was considered
{ \[  
    \theta = \left( \begin{pmatrix}
             b_{-8} & 1 \\
              0	& b_{-8}
            \end{pmatrix} z^{-4} 
            +
         \begin{pmatrix}
             0 & 0 \\
              b_{-7} & b_{-6}
            \end{pmatrix} z^{-3} 
               +
         \begin{pmatrix}
             0 & 0 \\
              b_{-5} & b_{-4}
            \end{pmatrix} z^{-2}
               +
         \begin{pmatrix}
             0 & 0 \\
              b_{-3} & b_{-2}
            \end{pmatrix} z^{-1} 
            + O(1)
            \right) \otimes \d\! z.
\] }
\hspace{-0.32cm} Here, $b_{-8}, \ldots b_{-3}\in\C$ are fixed, and $b_{-2}=0$ by the residue theorem. 
We must assume that $b_{-7} \neq 0$. 
For the sake of simplicity and concreteness, let us therefore fix 
\begin{equation}\label{eq:choices}
 b_{-7}=1, \qquad b_{-8} = b_{-6} = \cdots = b_{-2} = 0.
\end{equation}
According to \cite[Theorem~1.3]{ISS1}, the associated Hitchin system $\H:\mathcal{M}^{PI}\fun B^{PI}$ has a unique singular fiber of type $II$ in the Kodaira classification, namely a genus $0$ curve with one cuspidal point, having a local equation analytically equivalent to $y^2 = x^3$.
The above local form simplifies to 
\begin{equation}\label{eq:thetaPI}
 \theta = \begin{pmatrix}
             0 & z^{-4} \\
              z^{-3} & 0
            \end{pmatrix} \d\! z 
            + O(1) \d\! z.
\end{equation}
Consider its characteristic polynomial
\[
   \det ( \theta - \zeta ) = \zeta^2 + a_1 (z) \zeta + q(z),
\]
%be the  of $\theta$, 
where $a_1\in H^0 (\C P^1, K (D)),$ $ q\in H^0 (\C P^1, K^{\otimes 2} (2D))$. 

\begin{proposition}\label{prop:PI}
For the choices~\eqref{eq:choices}, we have $a_1 = 0$ and
\[
 q(z) = ( z^{-7} + b z^{-4} ) (\d\! z)^{\otimes 2} 
\]
for some $b\in \C$. 
Conversely, for any $q$ of this form there exists $(\mathcal{E}, \theta)\in \mathcal{M}^{PI}$ satisfying 
\[
   \det ( \theta - \zeta ) = \zeta^2 + q(z). 
\]
\end{proposition}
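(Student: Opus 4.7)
The plan is to compute the characteristic polynomial of $\theta$ directly from the prescribed local data, extracting $a_1 = 0$ and the stated form of $q$, and then to invoke the spectral correspondence for the converse.

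First I would observe that, under the choices~\eqref{eq:choices}, each of the prescribed polar matrices $A_{-4}, A_{-3}, A_{-2}, A_{-1}$ in the expansion of $\theta$ is traceless -- the diagonal entries $b_{-8}, b_{-6}, b_{-4}, b_{-2}$ all vanish. Thus the polar part of $\operatorname{tr}(\theta)$ at $z=0$ vanishes, and since $\operatorname{tr}(\theta)\in H^0(\C P^1, K(D))$ has no other singularities, it lies in $H^0(\C P^1, K) = H^0(\C P^1, \mathcal{O}(-2)) = 0$. Therefore $a_1 = -\operatorname{tr}(\theta) = 0$.

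For the form of $q$, I would note that $q = \det(\theta)$ is automatically a global section of $K^{\otimes 2}(2D)$, a line bundle of degree $4$ whose $5$-dimensional space of sections consists of those with local expansion
\[
 (c_0 z^{-8} + c_1 z^{-7} + c_2 z^{-6} + c_3 z^{-5} + c_4 z^{-4})(\d\! z)^{\otimes 2}
\]
near $z=0$. Writing $\theta = \theta_{\mathrm{polar}} + \theta_{\mathrm{reg}}(z)\,\d\! z$ with $\theta_{\mathrm{polar}}$ as in~\eqref{eq:thetaPI}, one reads off: $c_0 = \det(A_{-4}) = 0$ by nilpotency of $A_{-4}$; $c_1 = \pm 1$ from the unique off-diagonal self-product of the polar entries; and $c_2 = c_3 = 0$, since the only self-product of the polar part contributes at order $-7$, while every cross-product with $\theta_{\mathrm{reg}}$ contributes at orders $\geq -4$. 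Hence $q = (z^{-7} + b z^{-4})(\d\! z)^{\otimes 2}$ for some $b \in \C$ (up to overall sign convention), yielding the forward direction.

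For the converse, given such a $q$, I would apply the Beauville--Narasimhan--Ramanan spectral correspondence: the curve $C_b \subset \Tot(K(D))$ cut out by $\zeta^2 + q = 0$ is irreducible with at worst a cuspidal singularity over $z=0$, and pushforward of a suitable line bundle on its normalization yields a rank-$2$ Higgs bundle on $\C P^1$ with characteristic polynomial $\zeta^2 + q$. A direct local computation at $z=0$ matches the pushforward to the prescribed polar form~\eqref{eq:thetaPI} up to gauge, consistent with the Kodaira type $II$ description of the singular fiber in~\cite{ISS1}; the same conclusion follows more abstractly from the known properness and surjectivity of $\H\colon\mathcal{M}^{PI}\to\mathcal{B}^{PI}\iso\C$. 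I expect the main obstacle to be exactly this gauge-matching step, since BNR a priori only pins down the characteristic polynomial rather than the specific nilpotent pattern in $A_{-4}$ and $A_{-3}$; the forward direction is, by contrast, a routine polynomial expansion once $H^0(K^{\otimes 2}(2D))$ is described.
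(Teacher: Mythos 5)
Your forward direction is correct and is essentially the paper's argument, only more explicit: the vanishing of $a_1$ via $H^0(\C P^1,K)=0$ is identical, and your coefficient-by-coefficient count in the $5$-dimensional space $H^0(\C P^1, K^{\otimes 2}(2D))$ (with $c_0=0$ by nilpotency of the leading matrix, $c_1=\pm 1$ from the unique off-diagonal polar product, $c_2=c_3=0$ because all cross terms with the $O(1)$ part are $O(z^{-4})$) is exactly the content of the paper's sentence that ``the coefficients of the terms of degrees smaller than $-4$ must be the ones stated.'' The sign discrepancy you flag is present in the paper as well and is harmless.

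For the converse you take a genuinely different route. The paper stays entirely on the Higgs-field side: it observes that the difference of any two achievable determinants lies in $H^0(\C P^1, K^{\otimes 2}(D))\cong\C$, generated by $z^{-4}(\d\! z)^{\otimes 2}$, so that the set of achievable $q$ is an affine line --- concretely, perturbing the constant term of the bottom-left entry of the free $O(1)$ part of $\theta$ sweeps out exactly the parameter $b$. This is elementary and avoids the spectral correspondence altogether. Your BNR argument also works, and the gauge-matching step you worry about is in fact unproblematic here: for every $b$ the spectral curve is \emph{smooth} over $z=0$ (in the trivialization $\zeta=y\,\d\! z/z^4$ the equation becomes $y^2+z+bz^4=0$, a simple ramification point), and pushing forward a line bundle along a smooth ramification point of a double cover reproduces precisely the ramified local normal form~\eqref{eq:thetaPI}. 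One small correction: the cuspidal singularity (for $b=0$) sits over $z=\infty$, not over $z=0$; over $z=0$ the curve is always a smooth branch point, which is exactly why the polar data~\eqref{eq:choices} is recovered. The trade-off is that the paper's argument is shorter and self-contained, while yours generalizes more readily to situations where one cannot exhibit the free parameter in $\theta$ by hand.
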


\begin{proof}
 By~\eqref{eq:choices}, the trace $-a_1$ is a holomorphic differential over $\C P^1$ with no pole at $0$. 
 As it has no other poles either, we have $a_1 \in H^0 (\C P^1, K) = 0$. 

 To get a term of the determinant, the top right $z^{-4}$ entry of $\theta$ may multiply a bottom left entry of the additional $O(1)$ term to form an unspecified term $O(z^{-4})(\d\! z)^{\otimes 2}$. 
 It is easy to see that the coefficients of the terms of degrees smaller than $-4$ must be the ones stated. 
 Consider the set 
 \[
  \mathcal{B}^{PI} = \{q\colon \; q \mbox{ gives rise to a Higgs field satisfying~\eqref{eq:choices}} \} \subset H^0 (\C P^1, K^{\otimes 2} (2D)). 
 \]
 The difference between any two elements $q_1, q_2\in \mathcal{B}$ is a quadratic differential with a pole of order at most $4$ at $0$. 
 Since $q_1, q_2$ have no other poles, their difference belongs to $H^0(\C P^1, K^{\otimes 2} (D))$. 
 It follows that this set is an affine space modeled on 
 \[ 
  H^0(\C P^1, K^{\otimes 2} (D))\cong H^0(\C P^1, \mathcal{O}) = \C. 
 \]
 A generator of this cohomology space is $z^{-4}(\d\! z)^{\otimes 2}$. 
\end{proof}

The space $\mathcal{B}^{PI}$ will be referred to as \emph{irregular Hitchin base} in the {\Pa} I case. 
For any $b\in \C$, let us define the projective curve
\[
 \Sigma^{PI}_b = \{ (z,\zeta ) \colon \; \zeta^2 + ( z^{-7} + b z^{-4} ) (\d\! z)^{\otimes 2} = 0 \}. 
\]
%Then, $\Sigma^{PI}_b$ is a  
in the Hirzebruch surface 
%\[
 $\Proj ((K(D))^{-1} \oplus \mathcal{O}).$
%\]
We will refer to $\Sigma^{PI}_b$ as the \emph{irregular spectral curve} associated to $(z^{-7} + b z^{-4})(\d\! z)^{\otimes 2} \in \mathcal{B}^{PI}$. 
The case $b=0$ gives the central fiber of type $II$. 
%Let $\C^*$ be the multiplicative group of $\C$, with generic element denoted by $\lambda$. 

\begin{proposition}\label{prop:PI_action}
 There exists a holomorphic $\CC$-action %of $\C^*$ 
 on $\mathcal{M}^{PI}$, equivariant with respect to the action $b\mapsto \llambda^6 b$ on $\mathcal{B}^{PI}$. 
 It acts on $\Omega_I^{PI}$ with weight $5$. 
\end{proposition}

\begin{proof}
Let $\C^*$ act with weights $w(z), w(\zeta)$ respectively: 
\[
 \llambda\cdot (z, \zeta ) = (\llambda^{w(z)} z, \llambda^{w(\zeta)} \zeta). 
\]
Our aim is to choose these weights so that for any $b\in\C$ the curve $\Sigma^{PI}_b$ gets transformed into a curve $\Sigma^{PI}_{b'}$ for some other $b'\in\C$. 
Note that 
\begin{align*}
  \zeta^2 & \mapsto \llambda^{2w(\zeta)} \zeta^2 \\
  z^{-7} (\d\! z)^{\otimes 2}  & \mapsto \llambda^{-5 w(z)} z^{-7} (\d\! z)^{\otimes 2}
\end{align*}
It is easy to see that 
%\[
$w(z) = - 2, w(\zeta ) = 5 $
%\]
solves the problem. 

Fixing any $(z, \zeta) \in \Sigma_b$ for some $b\in\C$ we now need to determine the value $b' = b'(b,\llambda )\in \C$ such that
\[
 (\llambda^{-2} z, \llambda^5 \zeta) \in \Sigma_{b'}
\]
holds for every $\llambda \in \C^*$. We are given that 
\begin{equation}\label{eq:spectral_curve_PI}
 \zeta^2 + ( z^{-7} + b z^{-4} ) (\d\! z)^{\otimes 2} = 0. 
\end{equation}
Now, with the above choices, we find 
\[
 z^{-4} (\d\! z)^{\otimes 2} \mapsto \llambda^{4} z^{-4} (\d\! z)^{\otimes 2}. 
\]
Multiplying~\eqref{eq:spectral_curve_PI} through by $\llambda^{10}$ we find 
\[
 (\llambda^5 \zeta)^2 + (\llambda^{-2} z)^{-7} (\d (\llambda^{-2}z))^{\otimes 2} +  \llambda^6 b (\llambda^{-2} z)^{-4} (\d (\llambda^{-2}z))^{\otimes 2} = 0 .
\]
Therefore, the sought action on $\mathcal{B}^{PI}$ reads as 
%\[
 $b\mapsto b' = \llambda^6 b.$ %\qedhere
%\]
\end{proof}

Let us determine the $\CC$-fixed loci of $Y:=\M^{PI}$ and their weight decompositions.
Due to \cref{lemma_core_is_the_central_fibre}, %we have that 
$$\Core(Y)=\H^{-1}(0),$$ thus is a cuspidal genus $0$ curve. 
It contains, but is not equal\footnote{as the fixed locus is smooth} to the $\CC$-fixed locus.
Due to the cohomology rank count (combination of \eqref{equation cohomology of the core equal to the total space} and \eqref{EqnFrankelIntro}) 
the fixed locus consists of 2 points, $$Y^{\CC}=\F=\F_0 \sqcup \F_1.$$ One of these, say $\F_0$, has to be the cuspidal point, since
$\Core(Y)\setminus\F$ consists of smooth $\CC$-flowlines between $\F_0$ and $\F_1$.
%Now, we pass to compute the weight decompositions. 
One of the points is the minimum $F_{\min}$ and the other is the maximum $F_{\max}$ for the
restriction of the moment map $H:Y\fun \R$ of the action $S^1\leq \CC$ on the core. Now the decomposition
$$T_{F_{\max}} Y =H_{-k_1} \oplus H_{k_2} $$ has a negative and a positive weight,
where the negative corresponds to the $\CC$-flow from $F_{min}$, and the positive goes outside the core. 
Due to \cref{omega_C_pairing}, and the last statement of \cref{prop:PI_action}, $k_2-k_1=5,$ in particular $k_2\geq 6.$
Due to the first statement of \cref{prop:PI_action}, we have $k_2|6$, hence $k_2=6,$ and so $k_1=1.$
Thus, the singular point $F_0$ cannot be $F_{\max}$, as otherwise, in local coordinates $(x,y)$ around the $F_0=(0,0)$
core would be a (irreducible) 
cuspidal curve invariant under the $(t x,t^6 y)$ action, so of type $y=x^6,$ which is smooth, contradiction.
Thus, $F_1=F_{\max}$, and the $\CC$-flowline within the core is weight-1, hence not torsion (i.e. fixed under a cyclic subgroup of $\CC$). 
On the other hand, the $\CC$-flowline emanating from $F_1$, of weight $6$, represents the Hitchin section of $h$, consisting of irregular Higgs bundles of the form 
\[
    \theta = \begin{pmatrix}
    0 & z^{-4} \\
    z^{-3} + b  & 0 
    \end{pmatrix} 
    \operatorname{d}\! z
\]
on $\mathcal{O}_{\C P^1} (-1) \oplus \mathcal{O}_{\C P^1} (1)$, using the isomorphism $K_{\C P^1} (4\cdot \{ 0 \})\cong \mathcal{O}_{\C P^1} (2)$ that maps $z^{-4} \operatorname{d}\! z$ to the degree $2$ polynomial $0\cdot z^2 + 0\cdot z + 1$. 
Indeed, as one readily checks, the choice $b=0$ gives rise to a spectral curve that is non-singular and ramified over $z=0$, and has a cusp point over $z=\infty$, and then the Hitchin section intersects it in the ramification point $(z, \zeta ) = (0,0)$. 

The weight decomposition of minimum $F_0=F_{\min}$ 
$$T_{F_{\min}} Y =H_{l_1} \oplus H_{l_2} $$
has only positive weights, and as before, $l_1+l_2=5.$ Moreover, both of these weight spaces correspond to the $\CC$-flow which goes outside
of the core (as due to the equivariant tubular neighborhood theorem, their image under the exponential map has to be smooth),
hence as before $l_1|6,\ l_2|6.$ Thus, up to a permutation, $l_1=2,\ l_2=3$.

Computation of the indices \eqref{Floer_grading_of_F_alphas_in_HF*(H_lambda)} gives, where we denote $\mu_{\lambda+}:=\mu_{\lambda+\delta}$ for $1\gg\delta>0$,
$$\mu_{1/6+}(F_0)=\mu_{1/6+}(F_1)=0,\ \mu_{1/3+}(F_0)=\mu_{1/3+}(F_1)=-2,$$
and {\MB} indices are (real dimension of negative weight spaces) $$\mu_0=0,\ \ \mu_1=2.$$
Therefore, %which together with 
\cref{lower_bounds_filtration} gives the lower bounds
$$\rk\, \FF_{1/6}(H^2(Y)) \geq 1,\ \rk\, \FF_{1/3} \geq 2,$$
hence $\FF_{1/6}\supset H^2(Y)$ and $\FF_{1/3}=H^*(Y).$ On the other hand, due to \cref{Cor intro about Fmin surviving}, $1\in H^0(F_{\min})$ is not in the filtration until $\lambda=1/3,$
so, together with the previous, we have:
\begin{equation}\label{filtration_panleve_I}
    0\subset \FF_{1/6}=H^2(\M^{PI}) \subset \FF_{1/3} =H^*(\M^{PI}).
\end{equation}
The outer periods of the $S^1$-action are multiples of $1/2,1/3,$ and $1/6,$ hence due to \cref{filtration_stability_theorem}, \cref{filtration_panleve_I} shows exactly when the filtration jumps, which then determines the filtration completely.

We could also use the spectral sequence method to reach the same conclusion. Recall the weight decompositions 
$$T_{F_0} Y =H_2 \oplus H_3, \quad  \ T_{F_1} Y = H_{-1} \oplus H_6,$$ 
where only $H_{-1}$ flows within the core. 
The other weight spaces yield the outer torsion manifolds 
$$Y_6\iso \C, \quad  Y_3 \iso \C \sqcup \C, \quad  \ Y_2 \iso \C \sqcup \C$$
where $Y_6$ is the Hitchin section and the second components in $Y_3$ and $Y_2$ are equal to $Y_6,$ as $\Z/3\leq \Z/6,$ and $\Z/2\leq \Z/6.$
Thus, their intersections with the energy hypersurfaces \eqref{Morse_Bott_submanifolds_of_orbits} are 
$$B_{1/6} \iso S^1,\ B_{1/3} \iso S^1 \sqcup S^1,\ B_{1/2} \iso S^1 \sqcup S^1.$$
Their shifts, computed via \eqref{CZindecesOfMorseBottSumbanifolds}, give the columns of the $E_1$-page of the {\MBF} spectral sequence, shown on \cref{MI_sp_seq}. 
In it, we show all the differentials 
$\d_r,\ r\geq 1$ stapled together.
The column $H^*(B_1)$ is determined by \cref{hypersurface_cohomology}, as the intersection form of $\M^{PI}$
is trivial. Indeed, this is true as $H_2(\M^{PI})=[\Core(Y)],$ and its self-intersection is zero, being a fiber class (of the Hitchin map).
\begin{figure}[h]
		\centering
		{
			\includegraphics[scale=1]{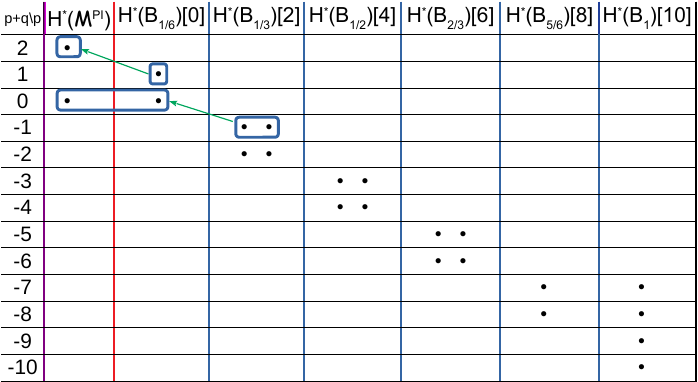}
                \caption{Spectral sequence for $X=I$}
                \label{MI_sp_seq}
		}
\end{figure}

Due to the last statement of
\cref{Theorem spectral seq}, the spectral sequence determines the filtration -- we see that the class in $H^2(\M^{PI})$ gets annihilated by the column with period $1/6$, hence lies in $\FF_{1/6}.$ Similarly, $H^0$ lies in $\FF_{1/3},$ hence we recover the filtration \eqref{filtration_panleve_I} indeed.

\subsection{Painlev\'e II case}

The divisor is again $D = 4 \cdot \{ 0 \}$. 
The local form of the Higgs field is 
\[
   \theta = \left[ z^{-4} \begin{pmatrix}
         a_+ + b_+ z + c_+ z^2 + \llambda_+ z^3 & 0 \\
         0 & a_- + b_- z + c_- z^2 + \llambda_- z^3 
        \end{pmatrix} + O(1)
  \right] \otimes \d\! z. 
\]

We fix any $a\in \C^*$ and choose 
\[
 a_{\pm} = \pm a, \quad b_{\pm} = c_{\pm} = \llambda_{\pm} = 0. 
\]
It follows from~\cite[Theorem~1.1]{ISS1} that the corresponding Hitchin moduli space $\mathcal{M}^{PII}$ has one singular fiber, of type $III$, namely consisting of two genus $0$ components intersecting in a single $A_3$ type curve singularity also known as a tacnode, with a local equation equivalent to $y^2 = x^4$. %tangentially at a single point.  

\begin{proposition}
 We have $\operatorname{tr} (\theta ) = 0$ and 
\[
 q(z) = ( -a^2 z^{-8} + b z^{-4} ) (\d\! z)^{\otimes 2} 
\]
for some $b\in \C$. 
Conversely, for any $q$ of this form there exists $(\mathcal{E}, \theta)\in \mathcal{M}^{PII}$ satisfying 
\[
   \det ( \theta - \zeta ) = \zeta^2 + q(z). 
\]
\end{proposition}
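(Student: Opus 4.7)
The plan is to mirror the Painlev\'e~I argument of Proposition~\ref{prop:PI}, adjusting for two features specific to the Painlev\'e~II local form: the leading term of $\theta$ is diagonal with $z^{-4}$ poles on \emph{both} rows (so $\det(\theta)$ has its highest-order pole at $z^{-8}$ instead of $z^{-7}$), and the residual freedom in $q$ is now governed by $H^0(\C P^1, K^{\otimes 2}(D))$ rather than $H^0(\C P^1, K^{\otimes 2}(2D))$.

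First I would establish $\operatorname{tr}(\theta) = 0$. With the normalisation $a_{\pm} = \pm a$ and $b_{\pm} = c_{\pm} = \llambda_{\pm} = 0$, the trace of the diagonal leading block cancels order by order in $z^{-4}, z^{-3}, z^{-2}, z^{-1}$, while the $O(1)$ tail is regular at $z=0$. Since $\theta$ has no other poles, $\operatorname{tr}(\theta)$ is a global section of $K$, hence zero, and consequently $a_1 = 0$.

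Next I would analyse $\det(\theta)$. Writing
\[
 \theta = \begin{pmatrix} az^{-4} + \alpha(z) & \beta(z) \\ \gamma(z) & -az^{-4} + \delta(z) \end{pmatrix} \d\! z, \qquad \alpha, \beta, \gamma, \delta \in O(1),
\]
the only contribution of pole order $8$ to $\det(\theta)$ comes from the product of the two leading diagonal entries, producing $-a^2 z^{-8}(\d\! z)^{\otimes 2}$; all remaining cross-terms have pole order at most $4$ at $z=0$ and are regular elsewhere. Hence $q(z) + a^2 z^{-8}(\d\! z)^{\otimes 2} \in H^0(\C P^1, K^{\otimes 2}(D))$. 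A degree computation gives $\deg K^{\otimes 2}(D) = -4 + 4 = 0$, so $K^{\otimes 2}(D) \cong \mathcal{O}_{\C P^1}$ with generator $z^{-4}(\d\! z)^{\otimes 2}$, yielding the claimed shape of $q$. For the converse, for every $b\in\C$ I would exhibit an explicit Higgs pair on $\mathcal{O}_{\C P^1}(-1) \oplus \mathcal{O}_{\C P^1}(1)$ whose diagonal leading entries are $\pm a z^{-4}$ and whose off-diagonal $O(1)$ correction is calibrated to produce the term $b z^{-4}(\d\! z)^{\otimes 2}$ in the characteristic polynomial; stability together with completeness of the induced map to $\mathcal{B}^{PII}$ then follow from~\cite{ISS1}.

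The hard part will be the converse direction: ensuring that the produced Higgs field sits on an admissible underlying vector bundle and that its local gauge class at $z=0$ matches the irregular/Stokes data used to define $\M^{PII}$, rather than merely realising the correct characteristic polynomial. As in the Painlev\'e~I case, this amounts to bookkeeping the residual gauge freedom at the irregular singularity, which can be handled via an explicit Hitchin-section-type construction.
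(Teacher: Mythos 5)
Your argument follows the paper's own route: the trace is a global section of $K_{\C P^1}$ and hence vanishes, the leading diagonal entries contribute the $-a^2z^{-8}$ term, and the residual freedom is $H^0(\C P^1, K^{\otimes 2}(D))\cong\C$ with generator $z^{-4}(\d\! z)^{\otimes 2}$ (the paper reaches the same conclusion by writing out the determinant explicitly, $q=\bigl(-a^2z^{-8}-2a\alpha z^{-4}+(-\alpha^2-\beta\gamma)z^0\bigr)(\d\! z)^{\otimes 2}$, and quoting the Proposition~PI argument to kill the $z^0$ coefficient). The forward direction of your sketch is therefore correct.

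There is, however, one step in your converse that would fail as written: you propose to calibrate the \emph{off-diagonal} $O(1)$ correction so as to produce the term $bz^{-4}(\d\! z)^{\otimes 2}$. Since the off-diagonal entries are regular at $z=0$, their product $\beta\gamma$ contributes to $\det(\theta)$ only at order $z^0$ and can never generate a $z^{-4}$ pole. The parameter $b$ is in fact produced by the \emph{diagonal} perturbation: writing the diagonal entries as $\pm az^{-4}+\alpha$ and $\mp a z^{-4}-\alpha$ (the sign forced by $\operatorname{tr}\theta=0$), the cross-term gives $b=-2a\alpha$, so one realises any $b$ by choosing $\alpha=-b/(2a)$ (possible because $a\neq 0$) and then tuning $\beta\gamma=-\alpha^2$ to annihilate the constant term, as required by holomorphy of $q$ at $z=\infty$. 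With this correction the construction goes through; the remaining bookkeeping of the irregular type at $z=0$ is unaffected since the leading $z^{-4}$ coefficient stays regular semisimple with eigenvalues $\pm a$.
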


\begin{proof}
 Since $\theta$ has no singularity away from $z=0$ (including at $z=\infty$), it must be of the form 
\[
 \theta =  \begin{pmatrix}
         a z^{-4} + \alpha & \beta \\
         \gamma & - a z^{-4} + \delta 
         \end{pmatrix}
\]
for some $\alpha , \beta , \gamma , \delta \in \C$. 
Vanishing of the trace follows exactly as in Proposition~\ref{prop:PI}. 
It is easy to check that then the determinant reads as 
\[
 q(z) = \left( -a^2 z^{-8} - 2a \alpha z^{-4} + (-\alpha^2 - \beta \gamma)z^0 \right) (\d\! z)^{\otimes 2}. 
\]
The same argument as in Proposition~\ref{prop:PI} gives vanishing of the coefficient of $z^0$. 
\end{proof}

The space of quadratic differentials $q$ as in the Proposition will be called \emph{irregular Hitchin base} in the {\Pa} II case, and denoted by $\mathcal{B}^{PII}$. 
For any $b\in\C$ we define the spectral curve by the equation 
\begin{equation}\label{eq:spectral_curve_PII}
  \Sigma^{PII}_b = \{ (z,\zeta ) \colon \;  \zeta^2 +  \left( -a^2 z^{-8} + b z^{-4} \right) (\d\! z)^{\otimes 2} = 0 \}.
\end{equation}
The central fiber (of type $III$) again corresponds to $b=0$. 

\begin{proposition}\label{prop:PII_action}
 There exists a $I$-holomorphic $\CC$-action on $\mathcal{M}^{PII}$, equivariant with respect to the action $b\mapsto \llambda^4 b$ on $\mathcal{B}^{PII}$. 
 It acts on $\Omega_I^{PII}$ with weight $3$. 
\end{proposition}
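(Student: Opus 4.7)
The plan is to follow the blueprint of Proposition~\ref{prop:PI_action}: search for a pair of integer weights $w(z), w(\zeta)$ such that the diagonal $\C^*$-action
\[
    \lambda\cdot(z,\zeta) = \bigl(\lambda^{w(z)} z,\ \lambda^{w(\zeta)} \zeta\bigr)
\]
on the Hirzebruch surface containing the irregular spectral curves permutes the family $\{\Sigma^{PII}_b\}_{b\in\C}$, sending each $\Sigma^{PII}_b$ to some $\Sigma^{PII}_{b'(b,\lambda)}$, and then read off from this the induced actions on $\mathcal{B}^{PII}$ and on $\Omega_I^{PII}$.

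The first step is a scaling analysis of~\eqref{eq:spectral_curve_PII}. The three monomial terms $\zeta^2$, $z^{-8}(\d z)^{\otimes 2}$ and $z^{-4}(\d z)^{\otimes 2}$ carry weights $2w(\zeta)$, $-6w(z)$ and $-2w(z)$ respectively. Since $a\in\C^*$ is part of the fixed irregular polar data, the coefficient $-a^2$ of the leading monomial must be preserved, so equating the first two weights forces the relation $w(\zeta) = -3 w(z)$. The smallest choice giving a positive weight on $\zeta$, which in turn makes $\Omega_I^{PII}$ carry a positive weight by the general principle $w(\Omega_I^{PX})=w(\zeta)$ stated immediately before Section~\ref{sec:PI}, is
\[
    w(z) = -1, \qquad w(\zeta) = 3.
\]
Substituting $(Z,\xi) = (\lambda^{-1} z, \lambda^3 \zeta)$ into~\eqref{eq:spectral_curve_PII} and multiplying through by $\lambda^6$ then yields
\[
    \xi^2 + \bigl(-a^2 Z^{-8} + \lambda^4 b\, Z^{-4}\bigr)(\d Z)^{\otimes 2} = 0,
\]
identifying the image curve with $\Sigma^{PII}_{\lambda^4 b}$, hence the induced action on $\mathcal{B}^{PII}$ with $b \mapsto \lambda^4 b$ as desired, and simultaneously giving weight $w(\zeta)=3$ on the Atiyah--Bott form.

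The step I would flag as the main subtlety is the passage from the $\CC$-action on the family of spectral curves to a genuine holomorphic $\CC$-action on $\mathcal{M}^{PII}$ itself. This is carried out via the BNR correspondence: a line bundle on $\Sigma^{PII}_b$ pulls back along $\phi_\lambda^{-1}$ to one on $\Sigma^{PII}_{\lambda^4 b}$, and one must verify that the resulting pushforward Higgs bundle on $\C P^1$ still satisfies the prescribed polar condition. Since the leading coefficient $-a^2$ of the spectral equation is preserved exactly by our scaling, this verification reduces to a routine local check at $z=0$, entirely parallel to the one implicit in Proposition~\ref{prop:PI_action}; no new difficulty arises over the unique singular fiber (of type $III$).
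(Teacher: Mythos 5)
Your proposal is correct and follows essentially the same route as the paper: impose $2w(\zeta)=-6w(z)$ to preserve the leading coefficient $-a^2$, take the coprime solution $w(z)=-1$, $w(\zeta)=3$, and read off $b\mapsto \llambda^4 b$ by rescaling the spectral equation by $\llambda^6$; the weight $3$ on $\Omega_I^{PII}$ then follows from the general statement that $w(\Omega_I^{PX})=w(\zeta)$. Your extra remark about lifting the action from the family of spectral curves to $\mathcal{M}^{PII}$ via the BNR correspondence is a reasonable point of care that the paper leaves implicit.
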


\begin{proof}
 With notation as in Proposition~\ref{prop:PI_action}, the action on~\eqref{eq:spectral_curve_PII} is 
 \begin{equation}\label{eq:transformation_spectral_curve_II}
  \llambda^{2w(\zeta )} \zeta^2 - a^2 \llambda^{-6 w(z )} z^{-8} (\d\! z)^{\otimes 2} + b' \llambda^{-2 w(z )} z^{-4} (\d\! z)^{\otimes 2} = 0 
 \end{equation}
Thus, to preserve the family $\{ \Sigma^{PII}_b \}_{b\in\C}$ of spectral curves, we need to impose the condition 
 \[
  2w(\zeta ) = -6 w(z ). 
 \]
 The coprime solution is 
 \[
  w(\zeta ) = 3, \quad w(z ) = - 1. 
 \]
 We then see that in order for~\eqref{eq:transformation_spectral_curve_II} to 
 be the equation of spectral curve ~\eqref{eq:spectral_curve_PII}, we must have $b'= \llambda^4 b$. 
\end{proof}

\subsection{Painlev\'e II (Flaschka--Newell) case} \label{sec Pa II}

The local form of the irregular Higgs field at $z=0$ is 
\[
    \theta = \left[ \begin{pmatrix}
             b_{-6} & 1 \\
              0	& b_{-6}
            \end{pmatrix} z^{-3}
               +
         \begin{pmatrix}
             0 & 0 \\
              b_{-5} & b_{-4}
            \end{pmatrix} z^{-2}
               +
         \begin{pmatrix}
             0 & 0 \\
              b_{-3} & b_{-2}
            \end{pmatrix} z^{-1} 
            + O(1)
            \right] \otimes \d\! z, 
\]
and $\theta$ has a logarithmic pole with a nilpotent residue at $z=\infty$.  
We make the choices 
\begin{equation}\label{eq:choicesIIFN}
 b_{-5}=1, \qquad b_{-6} = \cdots = b_{-2} = 0.
\end{equation}
The corresponding Hitchin moduli space $\mathcal{M}^{PIIFN}$ again only has one singular fiber, of type $III$, see~~\cite[Theorem~2.7]{ISS3} and its complement~\cite[Lemma~2~(4)]{Sz_PW}. 

An analysis similar to Propositions~\ref{prop:PI} and~\ref{prop:PI_action} gives: 

\begin{proposition}\label{action_on_PII}
 There exists a holomorphic $\CC$-action on $\mathcal{M}^{PIIFN}$, equivariant with respect to the action $b\mapsto \llambda^4 b$ on $\mathcal{B}^{PIIFN}$. 
 It acts on $\Omega_I^{PIIFN}$ with weight $3$. 
\end{proposition}

\begin{proof}
 The Hitchin base is 
 \[
  \mathcal{B}^{PIIFN} = \{ ( z^{-5} + b z^{-3} ) (\d\! z)^{\otimes 2} \colon \quad b\in \C \},
 \]
see~\cite[Equation~(81)]{ISS3} (up to the relabelling $(z, \zeta, b) \mapsto (z_2, w_2, t)$ and the global factor $z_2^2$).
The action
\[
 z\mapsto \llambda^{-2} z, \quad \zeta \mapsto \llambda^3  \zeta 
\]
preserves $\mathcal{B}^{PIIFN}$ and acts on it by $b\mapsto \llambda^4 b$.
\end{proof}

Let us determine the $\CC$-fixed loci of $Y:=\M^{PII}$ and their weight decompositions; this does not depend on which of the two cases above we discuss, as in both cases the geometry of the $\CC$-action is the same.
Given the Hitchin fibration $\H: Y\fun \mathcal{B}^{PII}$, the core is (first equality is due to \cref{lemma_core_is_the_central_fibre})
$$\Core(Y)=h^{-1}(0)= \CP^1 \cup_{F_0} \CP^1$$
where two $\C P^1$ intersect each other in a tacnode (i.e., a tangency of second order, locally equivalent to $y^2 = x^4$) at a single point $F_0.$ Thus 
$\rk\, H^*(Y)=\rk\, H^*(\Core(Y))=3$.

As the fixed locus is smooth and its cohomology rank is equal to the cohomology of the total space, we either have 3 fixed points, or a fixed $\C P^1$ and a point on the other one. But the latter is impossible, as the two $\CP^1$ in the core share the tangent space at their intersection point, which is a weight$-0$ space, meaning the other $\C P^1$ would have to be fixed as well, contradiction.
Thus, the fixed locus 
$$Y^{\CC}=F_0 \sqcup F_1 \sqcup F_2$$
is the point of intersection $F_0$ of the two $\C P^1$'s and an another point on each of them, $F_1$ and $F_2.$
Assuming that $F_\min=F_1$, its weight decomposition is 
\begin{equation*} %\label{tangent_decomp_minimum_PII_case}
    T_{F_\min} Y = H_{k_1} \oplus H_{k_2},
\end{equation*}
where $k_1,k_2>0.$ Assume that e.g. $H_{k_2}$ corresponds to the $\CC$-flow in $\CP^1$, then it flows in $H_{-k_2}$-weight space in $F_0$, which is also tangent space of the other $\CP^1.$ Hence, flowing back within that $\C P^1$,
$H_{k_2}$-weight space appears in $F_2$ as well. But because of 
\begin{equation}\label{weights_of_the_minimum_PII}
    k_1+k_2=3
\end{equation}
condition (\cref{omega_C_pairing} and \cref{action_on_PII}), 
$F_2$ then has the same decomposition as $F_1,$ in particular it has all positive weights, giving another minimum point, which is impossible.\footnote{recall \cref{Morse_Bott_decomposition_theorem_for_moment_map}.}
Thus, we have
$$\F_\min=\F_0,$$
and its weight decomposition is (due to \eqref{weights_of_the_minimum_PII})
\begin{equation}\label{weight_decomp_minimum_PII}
    T_{F_0} Y = H_1 \oplus H_2.
\end{equation}
If $H_2$ corresponds to flow within $\C P^1$'s, then the weight decomposition of 
$F_1$ (and $F_2$) would have $H_{-2},$ thus (because of \cref{omega_C_pairing}) $H_5$ weight space as well.
%in the decomposition. 
The latter would then flow outside of the core, but $5 \nmid 4,$ contradiction.\footnote{recall that due to equivariance of $h:Y \fun \mathcal{B}^{PII}$ all outer weights have to divide the weight of the base $\mathcal{B}^{PII},$ which is due to \cref{action_on_PII} equal to 4.}
Thus, $H_1$ in \eqref{weight_decomp_minimum_PII} corresponds to the flow within the $\CP^1$'s, hence, due to \cref{omega_C_pairing} we have weight decompositions
\begin{equation}\label{weight_decomp_maximum_PII}
    T_{F_1} Y = H_{-1} \oplus H_{4},\ \ T_{F_2} Y = H_{-1} \oplus H_{4},
\end{equation}
and $H_2$ in \eqref{weight_decomp_minimum_PII} goes outside of the core.

It follows from~\cite[Lemma~2~(4)]{Sz_PW} that one of the two $\CP^1$ components of the core, say the one containing $F_2$, parametrizes flags at the logarithmic parabolic point $z=\infty$ for a Higgs bundle that has zero residue at that point, corresponding to the choice $b=0$ in the Hitchin base. 
This component therefore arises from blowing up the cuspidal singular point of the spectral curve lying over $z=\infty$. 
The $\CC$-flowline of weight $4$ emanating from the other fixed point $F_1$ is then the Hitchin section, just as for {\Pa}  I. 

Now, computation of the indices \eqref{Floer_grading_of_F_alphas_in_HF*(H_lambda)} gives:\footnote{where, as before we denote $\mu_{\lambda+}:=\mu_{\lambda+\delta}$ for $1\gg\delta>0$}
$$\mu_{1/4+}(F_0)=\mu_{1/4+}(F_1)=\mu_{1/4+}(F_2)=0,\ \mu_{1/2+}(F_0)=\mu_{1/2+}(F_1)=\mu_{1/2+}(F_2)=-2,$$
which together with $\mu_0=0, \mu_1=\mu_2=2$ and \cref{lower_bounds_filtration} gives the lower bounds
$$\rk\, \FF_{1/4}(H^2(Y)) \geq 2,\ \rk\, \FF_{1/2}(H^*(Y)) \geq 3,$$
hence $\FF_{1/4}\supset H^2(Y),$ and $\FF_{1/2}= H^*(Y)$.
Due to \cref{Cor intro about Fmin surviving} and \eqref{weight_decomp_minimum_PII}, $1\in H^0(F_{\min})=H^0(F_0)$ is not in the filtration until $\lambda=1/2,$
so, together with the previous, we have:
\begin{equation}\label{filtration_panleve_II}
    0\subset \FF_{1/4}=H^2(\M^{PII}) \subset \FF_{1/2} =H^*(\M^{PII}).
\end{equation}
These show precisely when the filtration jumps due to \cref{filtration_stability_theorem}, as the outer $S^1$-periods are multiples of $1/2$ and $1/4$.

We could also use the spectral sequence method to reach the same conclusion. 
From the weight decompositions 
\eqref{weight_decomp_minimum_PII},\eqref{weight_decomp_maximum_PII}
we get the outer torsion manifolds 
$$Y_4\iso \C \sqcup \C, \quad \ Y_2 \iso \C \sqcup \C \sqcup \C,$$ 
%\ Y_2 \iso \C \sqcup \C.$$
where one of the components of $Y_4$ is the 
Hitchin section, and the second 
and third components in $Y_2$ 
is $Y_4,$ as $\Z/2\leq \Z/4$.
Thus, their intersections with the energy hypersurfaces \eqref{Morse_Bott_submanifolds_of_orbits} are 
$$B_{1/4} \iso S^1 \sqcup S^1,\quad B_{1/2} \iso S^1 \sqcup S^1 \sqcup S^1.$$
%\ B_{1/2} \iso S^1 \sqcup S^1.$$
Their shifts, computed via 
\eqref{CZindecesOfMorseBottSumbanifolds}, %give the columns of the
give the columns of the $E_1$-page of the {\MBF} spectral sequence, shown on \cref{MII_sp_seq}. 
As before, we show all differentials 
$\d_r,\ r\geq 1$ stapled together.
% $E_1$-page of the spectral sequence on \cref{MII_sp_seq}. 
% On it, we are seeing all differentials 
% $\d_r,\ r\geq 1$ stapled together.

The column $H^*(B_1)$ is determined by \cref{hypersurface_cohomology}, as the intersection form of $\M^{PII}$
has a one-dimensional kernel, spanned by the sum of core components.
Indeed, denoting them by $E_1$ and $E_2$,
    $$(E_1+E_2)\cdot (E_1+E_2) =E_1^2 + 2E_1 \cdot E_2 + E_2^2 = -2 + 4 -2 =0,$$
    where each $E_i^2=-2$, as it is a Lagrangian curve, thus by the Weinstein neighborhood theorem, its normal bundle is isomorphic to its cotangent bundle, hence the self-intersection is the negative Euler characteristic, i.e. $-2$ ($E_i=\CP^1$). Intersection $E_1 \cdot E_2 = 2$ as they are tangential in order 2.
%(which has self-intersection zero, being a fibre class of the fibration ).
\begin{figure}[h]
		\centering
		{
			\includegraphics[scale=1]{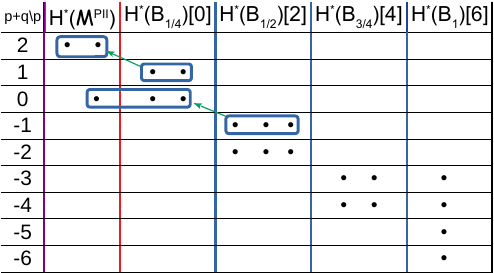}
                \caption{Spectral sequence for $X=II$}
                \label{MII_sp_seq}
		}
\end{figure}

As in the previous example, we conclude that the spectral sequence recovers the filtration \eqref{filtration_panleve_II}.
\subsection{Painlev\'e IV (degenerate) case}\label{sec Pa IV}

The divisor is $D = 3 \cdot \{ 0 \} + \{ \infty \}$. 
The local form of the Higgs field near $z=0$ is 
\[
   \theta = \left[ z^{-3} \begin{pmatrix}
         a_+ + b_+ z + \llambda_+ z^2 & 0 \\
         0 & a_- + b_- z + \llambda_- z^2 
        \end{pmatrix} + O(1) 
  \right] \otimes \d\! z. 
\]
At $z=\infty$ the Higgs field has a simple pole with non-semisimple residue. 
As before, we set 
\[
 a_{\pm} = \pm a, \quad b_{\pm} = \llambda_{\pm} = 0, 
\]
and then by the residue theorem, the residue at $z=\infty$ is nilpotent. 
According to \cite[Theorem~2.5]{ISS3} and \cite[Lemma~2~(5)]{Sz_PW}, this Hitchin fibration $\mathcal{M}^{PIV}$ has a unique singular fiber, that is of type $IV$, i.e. three genus-0 curves 
meeting transversally at a single point.
\begin{proposition}
 With the above assumptions, the trace of $\theta$ vanishes, and its determinant reads as 
 \[
  q(z) = ( -a^2 z^{-6} + b z^{-3} ) (\d\! z)^{\otimes 2} 
 \]
 for some $b\in \C$. 
 Conversely, for any $q$ of this form there exists $(\mathcal{E}, \theta)\in \mathcal{M}^{PIV}$ satisfying 
 \[
   \det ( \theta - \zeta ) = \zeta^2 + q(z). 
 \]
\end{proposition}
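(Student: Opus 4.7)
The plan is to mirror the arguments of Proposition~\ref{prop:PI} and its $PII$ analogue, adapted to the polar divisor $D = 3\{0\} + \{\infty\}$. Three pieces are required: vanishing of $\operatorname{tr}(\theta)$, the explicit form of $q = \det(\theta)$ as a section of $K^{\otimes 2}(2D)$, and the converse existence of $(\mathcal{E}, \theta) \in \mathcal{M}^{PIV}$ realizing any such $q$. The main work is the local analysis at $z=0$ and $z=\infty$ that kills the intermediate coefficients of $q$.

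For the trace, the choices $a_\pm = \pm a$ and $b_\pm = \lambda_\pm = 0$ make the visible negative-power diagonal of $\theta$ at $z=0$ trace-free, while at $z=\infty$ the nilpotent residue is automatically trace-free. Hence $\operatorname{tr}(\theta)$ extends as a holomorphic differential on $\C P^1$ and thus vanishes, exactly as in Proposition~\ref{prop:PI}.

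For the form of $q$, I would first note that $H^0(\C P^1, K^{\otimes 2}(2D))$ has dimension $5$, spanned by $z^k (\d\! z)^{\otimes 2}$ for $-6 \leq k \leq -2$. Writing locally $\theta = [\operatorname{diag}(az^{-3}, -az^{-3}) + A(z)] \d\! z$ with $A(z)$ holomorphic at $0$, a direct expansion gives
\[
 \det(\theta) = \bigl( -a^2 z^{-6} + a z^{-3}(A_{22}(0) - A_{11}(0)) + O(z^{-2}) \bigr)(\d\! z)^{\otimes 2}.
\]
The vanishing of the subleading diagonal coefficients $b_\pm, \lambda_\pm$ is precisely what prevents $z^{-5}$ or $z^{-4}$ contributions, so $c_{-5} = c_{-4} = 0$ automatically and $c_{-3}$ is a free parameter $b$. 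At $z=\infty$, the $w^{-2}$ coefficient of $\det(\theta)$ in the $w = 1/z$ chart equals $\det(\operatorname{Res}_\infty \theta)$, which vanishes by nilpotency, forcing $c_{-2} = 0$.

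For the converse, just as in Proposition~\ref{prop:PI}, the subset of $H^0(\C P^1, K^{\otimes 2}(2D))$ arising in this way is an affine torsor over $H^0(\C P^1, K^{\otimes 2}(D))$. Since $\deg(K^{\otimes 2}(D)) = 0$, this model space is one-dimensional, generated by $z^{-3}(\d\! z)^{\otimes 2}$, so every $q$ of the stated form is realized. I expect the subtlest point to be confirming, via the BNR spectral correspondence applied to $\Sigma_b = \{\zeta^2+q(z)=0\}$ in $\Tot(K(D))$, that the constructed Higgs bundle genuinely has the prescribed leading polar type at $z=0$ and the nilpotent logarithmic residue at $\infty$; this is handled by matching the ramification profile of $\Sigma_b$ over $0$ and $\infty$ with the local types, exactly as in the $PI$ and $PII$ cases.
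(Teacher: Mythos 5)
Your argument is correct, but it takes a genuinely different route from the paper's own proof of this proposition, which consists of a citation to Equation~(70) of the reference on irregular Hitchin systems (with a dictionary $z_2=z^{-1}$, $w_2=\zeta$, $b\leftrightarrow t$) rather than a fresh computation. What you do instead is transplant the self-contained local analysis that the paper uses for the Painlev\'e I and II propositions: the trace argument via $H^0(\C P^1,K)=0$ is identical in spirit, your expansion $\det\theta=\bigl(-a^2z^{-6}+az^{-3}(A_{22}-A_{11})+\det A\bigr)(\d\!z)^{\otimes 2}$ correctly kills the $z^{-5}$ and $z^{-4}$ coefficients using $b_\pm=\llambda_\pm=0$, and the constraint $c_{-2}=0$ from $\det(\operatorname{Res}_\infty\theta)=0$ is the right way to use the nilpotency at infinity. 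Your approach buys self-containedness and makes transparent exactly which hypotheses kill which coefficients; the paper's citation buys brevity and, importantly, outsources the converse (existence) direction to the moduli-theoretic results of the cited work. That converse is the one place where your write-up is thinner than it should be: the torsor argument only shows the set of achievable $q$'s is \emph{contained} in an affine line modeled on $H^0(\C P^1,K^{\otimes 2}(D))\cong\C$, and realizing \emph{every} $b$ requires either the spectral (BNR) construction you gesture at or an explicit Hitchin-section-type family as the paper exhibits in the Painlev\'e I case; you correctly identify this as the subtle point but do not carry it out, which leaves the converse at the same (slightly informal) level of detail as the paper's Proposition~\ref{prop:PI}.
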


\begin{proof}
 This follows from~\cite[Equation~(70)]{ISS3}. 
 Note that in that formula, $z_2 = z^{-1}$, and we made use of the trivialization $\frac{\operatorname{d}\! z_2}{z_2}$ of $K(D)$ near $z=\infty$. 
 Furthermore, the parameter $b_{-1}$ vanishes, and our free parameter $b$ can be identified with the parameter $t$ appearing in~\cite{ISS3}. 
 Finally, $w_2 = \zeta$. 
\end{proof}
  
The space of quadratic differentials $q$ as in the Proposition will be called \emph{irregular Hitchin base} in the degenerate {\Pa} IV case, and denoted by $\mathcal{B}^{PIV}$. 
For any $b\in\C$ we define the curve $\Sigma^{PIV}_b$ by the equation 
\begin{equation}\label{eq:spectral_curve_PIV}
  \zeta^2 +  \left( -a^2 z^{-6} + b z^{-3} \right) (\d\! z)^{\otimes 2} = 0.
\end{equation}
The central fiber of type $IV$ lies over $b=0$.

\begin{proposition}\label{prop_C_action_IV}
 There exists a holomorphic $\CC$-action on $\mathcal{M}^{PIV}$, equivariant with respect to the action $b\mapsto \llambda^3 b$ on $\mathcal{B}^{PIV}$. 
  It acts on $\Omega_I^{PIV}$ with weight $2$. 
\end{proposition}

\begin{proof}
 Using the same idea and notation as in Propositions~\ref{prop:PI_action},~\ref{prop:PII_action}, we find 
% \[
  $w(z ) = -1,\ w(\zeta ) = 2, $
% \]
 and we check that this action is equivariant with respect to 
 %\[
  $b\mapsto \llambda^3 b. $ % \qedhere
% \]
\end{proof}
%{\FZZ}
Let us determine the $\CC$-fixed locus of $Y:=\M^{PIV}$ and their weight decompositions; 
%this does not depend on which of the two cases above we discuss, as in both cases the geometry of $\CC$-action is the same.
Given the Hitchin fibration $\H: Y\fun \mathcal{B}^{PIV}$, the core is 
%(due to \cref{lemma_core_is_the_central_fibre})
$$\Core(Y)=h^{-1}(0)= \CP^1 \cup_{F_0} \CP^1 \cup_{F_0} \CP^1$$
where three $\C P^1$ intersect transversely at a single point $F_0.$ Thus
$\rk\, H^*(Y)=\rk\, H^*(\Core(Y))=4$.

As the fixed locus is smooth and its cohomology rank is equal to the cohomology of the total space, we either have 4 fixed points, or a fixed $\C P^1$ and a point on the other two. But the latter is impossible, as the weight decomposition of $F_0$ would then be $T_{F_0}Y= H_0 \oplus H_2$,
but the tangent spaces of the other two $\CP^1$ are $\CC$-invariant subspaces, thus both have to be equal to $H_2$, which is a contradiction as they are transverse.
%two $\CP^1$ in the core share the tangent space at their intersection point, which is a weight-0 space, meaning the other $\C P^1$ has to be fixed as well, contradiction.
Thus, the fixed locus 
$$Y^{\CC}=F_0 \sqcup F_1 \sqcup F_2 \sqcup F_3$$
is the point of intersection $F_0$ of the three $\C P^1$'s and another point on each of them, $F_1, F_2$ and $F_3$.
Because of the argument above, the weight decomposition of $F_0$ has to be homogeneous, hence due to \cref{omega_C_pairing} and the last part of \cref{prop_C_action_IV}
it is 
\begin{equation}\label{weight_decomp_minimum_PIV}
   T_{F_0} Y=H_1, 
\end{equation}
%$$$$
in particular 
%Assuming that 
$F_\min=F_0$.
%its weight decomposition is 
%\begin{equation*} %\label{tangent_decomp_minimum_PII_case}
%    T_{F_\min} Y = H_{k_1} \oplus H_{k_2},
%\end{equation*}
%where $k_1,k_2>0.$ Assume that e.g. $H_{k_2}$ corresponds to the $\CC$-flow in $\CP^1$, then it flows in $H_{-k_2}$-weight space in $F_0$, which is also tangent space of the other $\CP^1.$ Hence, flowing back within that $\C P^1$,
%$H_{k_2}$-weight space appears in $F_2$ as well. But because of 
%\begin{equation}\label{weights_of_the_minimum_PII}
%    k_1+k_2=3
%\end{equation}
%condition (\cref{omega_C_pairing} and \cref{action_on_PII}), 
%$F_2$ then has the same decomposition as $F_1,$ in particular it has all positive weights, giving another minimum point, which is impossible.\footnote{recall \cref{Morse_Bott_decomposition_theorem_for_moment_map}.}
%Thus, we have
%$$\F_\min=\F_0,$$
%and its weight decomposition is (due to \eqref{weights_of_the_minimum_PII})
%\begin{equation}\label{weight_decomp_minimum_PII}
%    T_{F_0} Y = H_1 \oplus H_2.
%\end{equation}
Therefore, subspaces of $H_1$ corresponds to flow within $\C P^1$'s, 
thus the weight decomposition of 
$F_1, F_2$ and $F_3$ has $H_{-1},$ thus (due to \cref{omega_C_pairing}) $H_3$ weight space:
%in the decomposition. 
%The latter would then flow outside of the core, but $5 \nmid 4,$ contradiction.\footnote{recall that due to equivariance of $h:Y\fun \mathcal{B}^{PII}$ all outer weights have to divide the weight of the base $\mathcal{B}^{PII},$ which is due to \cref{action_on_PII} equal to 4.}
%Thus, $H_1$ in \eqref{weight_decomp_minimum_PII} corresponds to the flow within the $\CP^1$'s, hence 
\begin{equation}\label{weight_decomp_maximum_PIV}
    T_{F_1} Y = H_{-1} \oplus H_{3},\quad T_{F_2} Y = H_{-1} \oplus H_{3},\quad
    T_{F_3} Y = H_{-1} \oplus H_{3},
\end{equation}
which %$H_3$ %in \eqref{weight_decomp_maximum_PIV} 
goes outside of the core. 
As before, the $\CC$-flowline corresponding to $H_{3}$ from one of these fixed points is the Hitchin section. 
Now, computation of the indices \eqref{Floer_grading_of_F_alphas_in_HF*(H_lambda)} gives:\footnote{where, as before we denote $\mu_{\lambda+}:=\mu_{\lambda+\delta}$ for $1\gg\delta>0$}
$$\mu_{1/3+}(F_0)=\mu_{1/3+}(F_1)=\mu_{1/3+}(F_2)=\mu_{1/3+}(F_3)=0,$$
%\ \mu_{1/+}(F_0)=\mu_{1/2+}(F_1)=\mu_{1/2+}(F_2)=-2,$$
which together with \cref{lower_bounds_filtration} 
gives the lower bound
$\rk\, \FF_{1/3}(H^2(Y)) \geq 3,$ 
%\rk\, \FF_{1/2}(H^*(Y)) \geq 3,$$
hence $\FF_{1/3}\supset H^2(Y).$ 
\cref{unit entering for weight-1 and weight-2 SHS} tells us that $1\in H^0(Y)$ enters the filtration at $\lambda=1$, hence altogether:
%and $\FF_{1}= H^*(Y)$.
%Due to \cref{Cor intro about Fmin surviving} and \eqref{weight_decomp_minimum_PII}, $1\in H^0(F_{\min})=H^0(F_0)$ is not in the filtration until $\lambda=1/2,$
%so together with the previous we have:
\begin{equation}\label{filtration_panleve_IV}
    0\subset \FF_{1/3}=H^2(\M^{PIV}) \subset \FF_{1} =H^*(\M^{PIV}),
\end{equation}
which are precisely filtration jumps due to \cref{filtration_stability_theorem}, as the outer $S^1$-periods are multiples of $1/3$. %and $1/4$.

Let us discuss the spectral sequence method. 
From the weight decompositions 
\eqref{weight_decomp_minimum_PIV},\eqref{weight_decomp_maximum_PIV}
we get the outer torsion manifolds 
$$Y_3\iso \C \sqcup \C \sqcup \C.$$ %\ Y_2 \iso \C \sqcup \C \sqcup \C$$ 
%\ Y_2 \iso \C \sqcup \C.$$
%(where the second and third component in $Y_2$ is $Y_4,$ as $\Z/2\leq \Z/4$).
Their intersections with the energy hypersurfaces \eqref{Morse_Bott_submanifolds_of_orbits} are 
$B_{1/3} \iso S^1 \sqcup S^1 \sqcup S^1.$
%,\ B_{1/2} \iso S^1 \sqcup S^1 \sqcup S^1.$$
%\ B_{1/2} \iso S^1 \sqcup S^1.$$
Their shifts, computed via \eqref{CZindecesOfMorseBottSumbanifolds},
give the columns of the $E_1$-page of the {\MBF} spectral sequence, shown on \cref{MIV_sp_seq}, where, as before, all differentials are stapled together.

The column $H^*(B_1)$ is determined by \cref{hypersurface_cohomology}, as the intersection form of $\M^{PIV}$
has a one-dimensional kernel, spanned by the sum of core components (labeled by $E_{1,2,3}$):
    $$(E_1+E_2+E_3)\cdot (E_1+E_2+E_3) =\sum_i E_i^2 + 2 \sum_{i<j} E_i \cdot E_j
    = 3(-2) + 3 \cdot 2 =0,$$
    where $E_i^2=-2$ as in PII example, and $E_i \cdot E_j =1$ as they intersect transversally in a point. 

%(which has self-intersection zero, being a fibre class of the fibration ).
\begin{figure}[h]
		\centering
		{
			\includegraphics[scale=1]{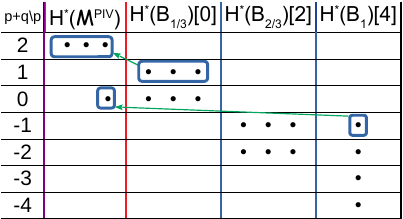}
                \caption{Spectral sequence for $X=IV$. %the arrows representing all higher differentials.
                }
                \label{MIV_sp_seq}
		}
\end{figure}
Unlike the previous examples, the spectral sequence does not completely determine the filtration \eqref{filtration_panleve_IV} --
one could not tell a priori whether $1\in H^0(\M^{PIV})$ on \cref{MIV_sp_seq} is annihilated by column $B_{2/3}$ or by column $B_1$.

\subsection{Painlev\'e VI case}\label{sec:PVI}

Let $D = \{ 0 \} + \{ 1 \} + \{ t \} + \{ \infty \}$ for some fixed $t\in \C P^1 - \{ 0, 1, \infty \}$. 
We consider Higgs fields $\theta$ with a simple pole at $D$, with nilpotent residue.  
The corresponding Hitchin moduli space $\mathcal{M}^{PVI}$ has a unique singular fiber, of type $\widetilde{D}_4 = I_0^*$, i.e. affine Dynkin $\widetilde{D}_4$ tree of genus 0 curves %$\CP^1$s 
interesting transversally.
For any such Higgs field, we have 
\[
 \det (\theta ) = b \frac{(\d\! z)^{\otimes 2}}{z(z-1)(z-t)} 
\]
for some $b\in\C$, see~\cite[Section~5]{Sz_PW}. 
We let $\mathcal{B}^{PVI}$ stand for the space of quadratic differentials of this form.

\begin{proposition}
 There exists actions of $\C^*$ on $\mathcal{M}^{PVI}$, equivariant with respect to the action $b\mapsto \llambda^2 b$ on $\mathcal{B}^{PVI}$. 
 It acts on $\Omega_I^{PVI}$ with weight $1$. 
\end{proposition}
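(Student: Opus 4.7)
The plan is to follow the strategy of Propositions~\ref{prop:PI_action}, \ref{prop:PII_action} and \ref{prop_C_action_IV}: find weights $(w(z), w(\zeta))$ such that the $\C^*$-action $(z,\zeta)\mapsto (\llambda^{w(z)} z, \llambda^{w(\zeta)} \zeta)$ preserves the family of spectral curves
\[
 \Sigma^{PVI}_b = \left\{ \zeta^2 + b \frac{(\d\! z)^{\otimes 2}}{z(z-1)(z-t)} = 0 \right\},\quad b\in\C,
\]
and then to check that the induced action on the parameter $b$ is $b \mapsto \llambda^2 b$.

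The essential new feature, compared with the preceding cases, is that the polar divisor $D$ contains three \emph{distinct} nonzero points $1, t, \infty$, each of which must be fixed by any $\C^*$-automorphism of $\C P^1$ preserving $D$. Since a nontrivial $\C^*$-action on $\C P^1$ has only two fixed points, we are forced to take $w(z) = 0$: the action must be trivial on the base curve. The only remaining freedom is to rescale the fiber coordinate $\zeta$, which at the level of Higgs bundles is the familiar action $\theta \mapsto \llambda^{w(\zeta)} \theta$.

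Next I would verify that this fiberwise rescaling descends to $\mathcal{M}^{PVI}$. The moduli space parametrises S-equivalence classes of semistable Higgs bundles with simple poles at $D$ and nilpotent residues there; both the simple pole condition and the nilpotency of residues are invariant under multiplication of $\theta$ by a nonzero scalar, as is semistability. Hence $\theta \mapsto \llambda^{w(\zeta)} \theta$ yields a well-defined $I$-holomorphic $\C^*$-action on $\mathcal{M}^{PVI}$.

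Finally, since $\det(\theta)$ is homogeneous of degree $2$ in $\theta$, rescaling $\theta$ by $\llambda^{w(\zeta)}$ multiplies $b$ by $\llambda^{2w(\zeta)}$; to match the prescribed action $b \mapsto \llambda^2 b$ we must take $w(\zeta) = 1$. The weight of $\Omega_I^{PVI}$ then equals $w(\zeta) = 1$ by the Proposition stated immediately before Section~\ref{sec:PI}, which identifies $\Omega_I^{PVI}$ with the Liouville form $-\d\!\zeta$. In contrast to the earlier {\Pa} cases there is essentially no computational obstacle here; the main point is the structural observation that the richer divisor $D$ forces $w(z) = 0$ and so pins down the action uniquely as the canonical fiberwise rescaling of Higgs fields.
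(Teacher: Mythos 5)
Your proposal is correct and follows essentially the same route as the paper: observing that the marked points $0,1,t,\infty$ force $w(z)=0$, so the action is the canonical fiberwise rescaling of $\theta$ with $w(\zeta)=1$, giving $b\mapsto\llambda^2 b$ and weight $1$ on $\Omega_I^{PVI}$. The extra checks you include (invariance of the pole/residue/stability conditions under rescaling $\theta$) are a welcome elaboration of the paper's terser argument but not a different method.
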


\begin{proof}
 Here we must have $w(z) = 0$, for otherwise the positions of the poles $1,t$ would change. 
 We may set $w(\zeta ) = 1$, and then we easily find that $b$ transforms as $b\mapsto \llambda^2 b$.
\end{proof}

As the core of $Y:=\M^{PVI}$ is equal to the central fiber of $h: \M^{PVI}\fun \mathcal{B}^{PVI},$
$$\Core(Y)=h^{-1}(0)=\{ \widetilde{D}_4 \text{ tree of } {\CP^1}\text{s} \},$$
the $\CC$-fixed locus has to have a total cohomology rank of 6. The $\CP^1$ corresponding to the central vertex has 4 fixed points, hence it is fixed.
Due to the smoothness of fixed loci, no other $\CP^1$s are fixed, hence they have another fixed point each, thus:
$$Y^{\CC}=\{F_0=\CP^1 \} \sqcup F_1 \sqcup \dots \sqcup F_4.$$
The weight decomposition of $F_0$ is (due to \cref{omega_C_pairing})
\begin{equation}\label{weight_decomposition_minimum_PVI}
    T_{F_0} Y = H_0 \oplus H_1,
\end{equation}
in particular $\F_\min=F_0.$ Using the $\CC$-flow and \cref{omega_C_pairing} again, one concludes
\begin{equation}\label{weight_decomposition_maximum_PVI}
T_{F_i} Y = H_{-1} \oplus H_2,\ i=1,\dots, 4, 
\end{equation}
and as before, the $\CC$-flowline corresponding to $H_2$ from one of
these fixed points is the Hitchin section. 
Computing the indices \eqref{Floer_grading_of_F_alphas_in_HF*(H_lambda)} and via \cref{lower_bounds_filtration}, one gets
$$\rk\, \FF_{1/2}(H^2(Y)) \geq 4,\ \rk\, \FF_{1}(H^2(Y)) \geq 5,$$
which gives $\FF_1 \supset H^2(Y).$
Together with \cref{Cor intro about Fmin surviving} we have
$$\rk\, \FF_{1/2} = 4, \ \FF_{\B,1/2}= \oplus_{i=1}^4 H^0(F_i)[-2],$$
and on the other hand $H^0(F_i)[-2]=\la [E_i] \ra$ in $H^2(Y)$,
where $E_i$ is the $\C P^1$ which $F_i$ belongs to.
By \cref{unit entering for weight-1 and weight-2 SHS}, $1\in H^0(\F_0)$
enters the filtration at $\lambda=2,$ hence altogether we have:
%\begin{equation}
\begin{align}
\begin{split}
\label{Filtration_Painleve_VI}
    \FF_{1/2} \iso \k^4 \subset \FF_1=H^2(Y;\k) \subset \FF_2= H^*(Y;\k) \\
    \FF_{\B, 1/2} = \la [E_1],\dots,[E_4] \ra \subset \FF_{\B,1}=H^2(Y) \subset \FF_2= H^*(Y)
\end{split}
\end{align}

From the weight decompositions 
\eqref{weight_decomposition_minimum_PVI},\eqref{weight_decomposition_maximum_PVI}, we get the outer torsion manifolds 
$Y_2\iso \C \sqcup \C \sqcup \C \sqcup \C,$ %\ Y_2 \iso \C \sqcup \C \sqcup \C$$ 
%\ Y_2 \iso \C \sqcup \C.$$
%(where the second and third component in $Y_2$ is $Y_4,$ as $\Z/2\leq \Z/4$).
 thus %Their intersections with the energy hypersurfaces \eqref{Morse_Bott_submanifolds_of_orbits} are 
$B_{1/2} \iso S^1 \sqcup S^1 \sqcup S^1 \sqcup S^1.$
%,\ B_{1/2} \iso S^1 \sqcup S^1 \sqcup S^1.$$
%\ B_{1/2} \iso S^1 \sqcup S^1.$$
Their shifts, computed via \eqref{CZindecesOfMorseBottSumbanifolds}, give the columns of the $E_1$-page of the spectral sequence, shown on \cref{MVI_sp_seq}. 
The column $H^*(B_1)$ is determined by \cref{hypersurface_cohomology}, as the intersection form of $\M^{PVI}$
has a one-dimensional kernel, spanned by the weighted sum $2[F_0] + [E_i]$ of the core components.
Like in the previous example, the spectral sequence alone does not completely determine the filtration \eqref{Filtration_Painleve_VI},
as we could not tell which of the columns $B_{1}, B_{3/2}$ and $B_2$ annihilates
$1\in H^0(\M^{PIV})$.
\begin{figure}[h]
		\centering
		{
			\includegraphics[scale=1]{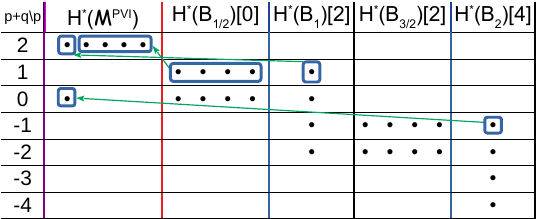}
                \caption{Spectral sequence for $X=VI$}
                \label{MVI_sp_seq}
		}
\end{figure}

\subsection{Remaining Painlev\'e cases}\label{sec:PIII-V}

\begin{proposition}\label{prop:existence}
 Let $\mathcal{M}^{PX}$ be any {\Pa} moduli space in the Dolbeault complex structure, with $X\in \{ III, V \},$ or non-degenerate versions of $II, IV, VI$ (i.e., ones with regular semi-simple residue at (some of) the simple pole(s)). 
 Then there exists no $\C^*$-action on $\mathcal{M}^{PX}$ equivariant with respect to the Hitchin fibration. 
\end{proposition}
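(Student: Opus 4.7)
The plan is to mimic, but now in the negative direction, the constructive arguments of Sections~\ref{sec:PI}--\ref{sec:PVI}. In each existence proof, the weights $(w(z), w(\zeta))$ were pinned down uniquely (up to common factor) by requiring that the \emph{fixed} coefficients of the local normal form of $\theta$ at its singular points be preserved. My strategy in the non-existence cases will be to write down all such fixed coefficients and show that the resulting homogeneous linear constraints on $(w(z), w(\zeta))$ only admit the trivial solution.

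First I would reduce the position-of-poles constraint on $w(z)$. Any simple (logarithmic) pole located at $z=c\notin\{0,\infty\}$ must be carried by $z\mapsto \llambda^{w(z)} z$ to itself, which forces $w(z)=0$ as soon as two such poles are present. This immediately rules out the non-degenerate {\Pa} VI (four fixed simple poles), and the non-degenerate {\Pa} V (two fixed simple poles in addition to the irregular point). With $w(z)=0$, any non-zero fixed spectral coefficient at any singular point transforms as $\llambda^{2w(\zeta)}$, so the presence of a non-zero fixed eigenvalue (i.e. a regular semi-simple residue or leading term) forces $w(\zeta)=0$. The same reasoning handles non-degenerate {\Pa} II and {\Pa} IV, where the leading coefficients $b_\pm$, $c_\pm$, $\llambda_\pm$ in the local expansions displayed at the beginning of Sections~\ref{sec Pa II}, \ref{sec Pa IV} become non-zero scalars that must be preserved simultaneously with the $a_\pm$; the $\zeta$-rescaling ratio needed for $b_\pm$ is incompatible with that needed for $a_\pm$, and the only common solution is trivial.

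The two remaining cases, {\Pa} III and {\Pa} V, require a local analysis at two distinct irregular points. For {\Pa} III, place the two double poles at $z=0$ and $z=\infty$, so $z\mapsto \llambda^{w(z)}z$ is now allowed for arbitrary $w(z)$. At $z=0$, the leading term of $\det(\theta)$ has the form $-a_0^2\, z^{-4}(\d\! z)^{\otimes 2}$ with fixed $a_0\neq 0$, which under $(z,\zeta)\mapsto(\llambda^{w(z)} z,\llambda^{w(\zeta)}\zeta)$ transforms by $\llambda^{-2w(z)}$, while $\zeta^2$ transforms by $\llambda^{2w(\zeta)}$; preservation of the defining equation of the spectral curve forces
\[
  w(\zeta) = -w(z).
\]
An analogous calculation at $z=\infty$ (passing to the coordinate $u=z^{-1}$) shows that the fixed leading coefficient $a_\infty\neq 0$ of the double pole there gives the opposite sign:
\[
  w(\zeta) = +w(z).
\]
Together these force $w(z)=w(\zeta)=0$. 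For {\Pa} V (both in its confluent, semi-simple and mixed variants) I would combine the constraint from the double pole (which fixes a ratio $w(\zeta)/w(z)$ by the same leading-term argument) with the constraint from the fixed semi-simple residue at a simple pole (which, together with $w(z)=0$ coming from that same simple pole being located away from $\{0,\infty\}$, forces $w(\zeta)=0$).

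The main obstacle is purely bookkeeping: correctly identifying, for each confluent variant of the Painlev\'e list, which coefficients in the local expansion of $\theta$ are free moduli (allowed to scale, producing the base weights recorded in Table~\ref{table}) and which are fixed parameters of the moduli problem. Once this dictionary is in place, the incompatibility of the resulting linear constraints on $(w(z),w(\zeta))$ is immediate, so no conceptual difficulty arises beyond the careful case distinction.
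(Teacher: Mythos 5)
Your argument only rules out $\C^*$-actions of the special monomial form $(z,\zeta)\mapsto(\llambda^{w(z)}z,\llambda^{w(\zeta)}\zeta)$ on the spectral data, but the proposition asserts that \emph{no} $\C^*$-action on $\mathcal{M}^{PX}$ equivariant with respect to the Hitchin fibration exists. An arbitrary equivariant action on the moduli space need not be induced by a coordinate rescaling of $(z,\zeta)$, and you supply no rigidity statement reducing the general case to the monomial one; so even if your weight bookkeeping were completed, it would not prove the statement as formulated. The paper avoids this entirely with a global observation: an equivariant action with nontrivial (linear) action on the base $\mathcal{B}^{PX}\cong\C$ carries the fiber over $b$ isomorphically onto the fiber over $\llambda^{w}b$, so a singular fiber over any $b\neq 0$ would force \emph{all} fibers over $\C^*$ to be singular, contradicting generic smoothness; hence equivariance forces at most one singular fiber (over $0$), and the classification in \cite{ISS2} shows that the cases $III$, $V$ and the non-degenerate $II$, $IV$, $VI$ all have at least two singular fibers. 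This argument is uniform across all variants and requires no local normal-form analysis.

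A secondary problem is that your bookkeeping, as sketched, misidentifies what ``non-degenerate'' means for $II$ and $IV$: the paper's definition is that the residue at a \emph{simple} pole is regular semi-simple (e.g.\ at $z=\infty$ for $PIV$ and for the Flaschka--Newell $PII$), not that the subleading coefficients $b_\pm, c_\pm,\llambda_\pm$ of the irregular part at $z=0$ are switched on. The correct constraint in those cases comes from the fixed nonzero double-pole coefficient of $\det\theta$ at the logarithmic point, and while a similar incompatibility of weights does result, your proposal defers precisely this case-by-case verification (including the degenerate $PIII$ variants with ramified leading terms, where the leading coefficient you rely on may behave differently). So the proof is both incomplete in its stated scope and not actually carried out where it is delicate.
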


\begin{proof}
 The existence of an equivariant action implies that the corresponding fibration has at most one singular fiber (otherwise, all fibers would be singular). 
 The statement now follows from~\cite[Theorems~1.1--1.6]{ISS2}. 
\end{proof}

\subsection{Further Higgs moduli of $\dim_\C=2$}\label{sec:E6}

A rank $3$ parabolic Hitchin moduli space of complex dimension $2$ is discussed in detail in~\cite[Chapter~2]{Mihajlovic_thesis}. 
Namely, $D = \{ 0 \} + \{ 1 \} + \{ \infty \}$ and the residue of $\theta$ has a triple eigenvalue $0$ at each logarithmic point. 
The corresponding moduli space $\mathcal{M}^{\widetilde{E}_6}$ has a unique singular fiber (under Hitchin fibration), 
which is an affine $\widetilde{E}_6$ tree of smooth rational curves.

This example together with {\Pa} VI are just two out of the five $2$-dimensional moduli spaces of parabolic Higgs bundles described in~\cite{groechenig2014hilbert} (see also~\cite{zhang2017multiplicativity}) as crepant resolutions $$\pi: \M_{\Gamma} \fun (T^* E)/\Gamma$$ for an elliptic curve $E$ and a finite subgroup $\Gamma \in \{0, \Z/2, \Z/3, \Z/4, \Z/6 \}$ of isometries of $E$. To these, one associates the affine
Coxeter--Dynkin graphs $Q_\Gamma= \widetilde{A}_0, \widetilde{D}_4, \widetilde{E}_6, \widetilde{E}_7, \widetilde{E}_8$ respectively.
The projection 
\begin{equation}\label{projection_A_1_affine}
    p: T^*E \iso E \times \C \fun \C
\end{equation}
together with $\pi$ induces Hitchin fibration 
\begin{equation}\label{Hitchin_fibration_for_M_gamma}
    h_{\Gamma}: \M_\Gamma \fun  (T^*E)/\Gamma \fun  \C/\Gamma=\mathcal{B}_\Gamma \iso \C.
\end{equation}
\begin{prop}\label{prop:Zhang}\cite[Proposition 5.4]{zhang2017multiplicativity}
The central fiber of \eqref{Hitchin_fibration_for_M_gamma} is
    $$\Core(\M_\Gamma)=h_{\Gamma}^{-1}(0)=\{Q_\Gamma \text{-tree of curves} \},$$
    where the curves are rational, except the $\Gamma=\{0\}$ case where the curve is elliptic.
\end{prop}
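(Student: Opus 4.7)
The plan is to realise the central fibre explicitly as the preimage under the crepant resolution $\pi: \M_\Gamma \to (T^*E)/\Gamma$ of the central fibre $\bar{p}^{-1}(0)$ of the induced map $\bar{p}: (T^*E)/\Gamma \to \C/\Gamma$. The equality $\Core(\M_\Gamma) = h_\Gamma^{-1}(0)$ itself is an application of \cref{lemma_core_is_the_central_fibre}: the fibrewise $\C^*$-action on $T^*E$ commutes with $\Gamma$ (which acts on $E$ by isometries and on the cotangent fibre by the dual character), descends to $\M_\Gamma$, and acts on $\mathcal{B}_\Gamma \cong \C$ with positive weight $|\Gamma|$. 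Next, $p^{-1}(0) = E \times \{0\}$ is $\Gamma$-invariant, so $\bar{p}^{-1}(0) = E/\Gamma$ as a subvariety of $(T^*E)/\Gamma$. For $\Gamma = \{0\}$ there is nothing to resolve and the central fibre is simply the elliptic curve $E$, matching the $\widetilde{A}_0$ loop.

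Assume henceforth $\Gamma \cong \Z/m$ with $m > 1$. The strategy proceeds in three parts. First, verify that $E/\Gamma \cong \CP^1$; this is standard, since any nontrivial finite subgroup of isometries of $E$ fixing the origin produces a rational quotient. Second, identify the singular locus of $(T^*E)/\Gamma$: since $\Gamma$ preserves the Liouville form, it acts on the cotangent fibre by the character dual to its action on the tangent to $E$, so all fixed points of nontrivial elements lie in $E \times \{0\}$. At each orbit representative $e \in E$ with stabiliser $\Gamma_e \cong \Z/n \leq \Gamma$, the induced linear action on $T_{(e,0)}(T^*E) \cong \C^2$ is by $(\zeta, \zeta^{-1})$ with $\zeta$ a primitive $n$-th root of unity, giving a Du Val singularity of type $A_{n-1}$ at the image point.

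Third, analyse the crepant resolution: each $A_{n-1}$ singularity is replaced by a Hirzebruch--Jung chain of $n-1$ rational $(-2)$-curves, and the proper transform of $E/\Gamma$ meets this chain transversally at a single extremal vertex. This last claim is a standard toric computation using coordinates $u = z^n,\ v = w^n,\ s = zw$ on the quotient, in which the image of $\{w=0\}$ is the toric ray $\{v = s = 0\}$ meeting one end of the chain. The remaining content is combinatorial: enumerate the nontrivial-stabiliser orbits on $E$ for each $\Gamma$, using $E = \C/(\Z + \tau \Z)$ with an appropriate $\tau$. For $\Gamma = \Z/2$ one finds four orbits, each with stabiliser $\Z/2$; for $\Z/3$, three orbits of stabiliser $\Z/3$; for $\Z/4$, two orbits of stabiliser $\Z/4$ and one of stabiliser $\Z/2$; for $\Z/6$, one orbit each with stabilisers $\Z/6, \Z/3, \Z/2$. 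The resulting dual graph has a central vertex (the proper transform of $E/\Gamma$) with $A_{n_i - 1}$ branches attached at single endpoints, reproducing exactly $\widetilde{D}_4, \widetilde{E}_6, \widetilde{E}_7, \widetilde{E}_8$.

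The main obstacle is the local analysis at each singularity: one must check that the proper transform of $E/\Gamma$ meets its exceptional chain at exactly one endpoint and nowhere else, and that chains arising from distinct orbits are disjoint. Disjointness follows because distinct singular points of $(T^*E)/\Gamma$ have disjoint exceptional loci in the resolution; the single-endpoint intersection is the standard but slightly delicate ingredient in Hirzebruch--Jung resolutions, verifiable either via toric geometry or by an explicit sequence of blowups. Once these local pictures are assembled, the global identification of $h_\Gamma^{-1}(0)$ with a $Q_\Gamma$-tree of curves is automatic.
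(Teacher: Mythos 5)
Your argument is correct, and worth noting that the paper itself offers no proof here: the statement is imported verbatim as \cite[Proposition 5.4]{zhang2017multiplicativity}, so what you have written is a self-contained derivation of a cited result rather than a variant of an argument in the text. Your route is the standard Kummer-type analysis, and all the pieces check out: the identification $\Core(\M_\Gamma)=h_\Gamma^{-1}(0)$ does follow from \cref{lemma_core_is_the_central_fibre} once one notes the induced weight-$|\Gamma|$ action on $\mathcal{B}_\Gamma\iso\C$ (consistent with the paper's later proposition that $b\mapsto\llambda^m b$); the symplectic condition forces the $(\zeta,\zeta^{-1})$ action on $T_{(e,0)}(T^*E)$, hence $A_{n-1}$ points all lying on the zero section; the proper transform of $E/\Gamma\iso\CP^1$ meets each Hirzebruch--Jung chain in one extremal component by the toric computation in the coordinates $u=z^n$, $v=w^n$, $s=zw$; and your orbit counts (four $\Z/2$-orbits for $m=2$; three $\Z/3$-orbits for $m=3$; stabilisers $\Z/4,\Z/4,\Z/2$ for $m=4$; stabilisers $\Z/6,\Z/3,\Z/2$ for $m=6$, obtained from $\det(1-\zeta_m^k)$ fixed-point counts) yield star-shaped graphs with leg lengths $(1,1,1,1)$, $(2,2,2)$, $(3,3,1)$, $(5,2,1)$, which are exactly $\widetilde{D}_4,\widetilde{E}_6,\widetilde{E}_7,\widetilde{E}_8$. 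What your approach buys is transparency about \emph{which} curve is the central vertex (the proper transform of $E/\Gamma$) and why all exceptional curves sit in the central fibre (all non-free points lie over $0\in\C/\Gamma$), facts the paper uses implicitly later when comparing multiplicities with imaginary-root labels; what it presupposes is the identification of $\M_\Gamma$ with the crepant resolution of $(T^*E)/\Gamma$, which the paper also takes from \cite{groechenig2014hilbert}, so that is a fair starting point.
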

%Its central fibre is known  
%(central fibre of \eqref{projection_A_1_affine}).

The fiber-contraction on $T^*E$
that makes \eqref{projection_A_1_affine} equivariant (with weight-1 action on $\C$) 
induces a $\CC$-action on $(T^*E)/\Gamma,$ which lifts on $\M_\Gamma$ 
as it is a crepant resolution of a ``nice'' isolated singularity 
(one can use e.g. \cite[Proposition 3.6.]{McLR18}).
% {\FZ give a reference why this works? It works for symplectic resolutions of affine singularities, due to Kaledin's 
% Derived equivalences by quantization
% Thm 1.8.i) }. 
This altogether makes \eqref{Hitchin_fibration_for_M_gamma} equivariant.
For completeness, we describe this action in the Higgs bundle-theoretic way:
\begin{proposition}
For all $m\in \{ 2,3,4,6 \}$ and $\Gamma = \Z/m$, there exists a $\CC$-action on $\M_\Gamma$, equivariant with $b\mapsto \llambda^m b$ on $\mathcal{B}_\Gamma.$ 
 The weight of this action on the holomorphic symplectic form $\Omega_I$ %^{\widetilde{E}_6}$ 
 is $1$. 
\end{proposition}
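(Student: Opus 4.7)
The plan is to define the $\CC$-action on $T^*E$ by fiber-scaling, observe that it commutes with $\Gamma$, descend to $(T^*E)/\Gamma$, and lift uniquely to the crepant resolution $\M_\Gamma$; both weight computations then fall out of the explicit formulas. Write $E=\C/\Lambda$ with the origin $0\in E$ fixed by $\Gamma=\Z/m\leq \operatorname{Aut}(E,0)$, so that a generator acts on $E$ by multiplication by a primitive $m$-th root of unity $\omega$. Trivialising $T^*E\cong E\times \C$ via the nowhere vanishing holomorphic $1$-form $\d z$, the induced $\Gamma$-action reads $(e,\zeta)\mapsto (\omega e,\omega^{-1}\zeta)$, and the canonical symplectic form is $\Omega_I=\d\zeta\wedge \d z$.

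Now define the candidate $\CC$-action on $T^*E$ by fiber-scaling, $\llambda\cdot(e,\zeta)=(e,\llambda\zeta)$. It manifestly commutes with $\Gamma$, so it descends to $(T^*E)/\Gamma$; moreover it scales $\Omega_I$ by a factor of $\llambda$ on $T^*E$ and therefore on the quotient. To compute the induced action on $\mathcal{B}_\Gamma$, note that the projection $p\colon T^*E\to \C$, $(e,\zeta)\mapsto\zeta$, intertwines the $\Gamma$-actions with the linear action $\zeta\mapsto\omega^{-1}\zeta$ on $\C$; hence the quotient $\C/\Gamma$ is identified with $\C$ via $b=\zeta^m$, and the fiber-scaling $\zeta\mapsto\llambda\zeta$ becomes $b\mapsto \llambda^m b$, as required.

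The only delicate point is lifting this $\CC$-action from the singular quotient $(T^*E)/\Gamma$ to the crepant resolution $\M_\Gamma$. The singular locus of $(T^*E)/\Gamma$ consists of finitely many isolated Kleinian surface singularities, located above the $\Gamma$-fixed points of $E$; minimal (equivalently, crepant) resolutions of such singularities are unique up to unique isomorphism, so any automorphism of $(T^*E)/\Gamma$ lifts canonically to $\M_\Gamma$. Applied to our $\CC$-action, this produces the desired action on $\M_\Gamma$, automatically preserving $\Omega_I$ with weight $1$ and intertwining the Hitchin map \eqref{Hitchin_fibration_for_M_gamma} with $b\mapsto \llambda^m b$. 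Alternatively, one may bypass the uniqueness argument by invoking the explicit moduli-theoretic construction of $\pi\colon\M_\Gamma\to(T^*E)/\Gamma$ via parabolic Higgs bundles in \cite{groechenig2014hilbert}, where rescaling of the Higgs field provides the lift tautologically, and both equivariance statements are transparent from the construction.
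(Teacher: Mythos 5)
Your proposal is correct, and it takes a somewhat different route from the proof the paper actually writes down. The paper's proof works entirely on the level of spectral data of the parabolic Higgs bundles: for each $m$ it records the characteristic polynomial $\zeta^m + b\,(\d z)^{\otimes m}/(\cdots)$, declares the action $z\mapsto z$, $\zeta\mapsto \llambda\zeta$, and reads off $b\mapsto \llambda^m b$ directly from the equation of the spectral curve (the weight of $\Omega_I$ being $1$ because $w(\zeta)=1$, $w(z)=0$). You instead work with the presentation $\pi\colon \M_\Gamma\to (T^*E)/\Gamma$: fiber-scaling on $T^*E$ commutes with the symplectic $\Gamma$-action $(e,\zeta)\mapsto(\omega e,\omega^{-1}\zeta)$, descends to the quotient, and lifts through the crepant (= minimal) resolution by uniqueness and functoriality of minimal resolutions of the finitely many du Val points; the identification $\C/\Gamma\cong\C$ via $b=\zeta^m$ then gives the weight $m$ on the base. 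This is in fact the construction the paper sketches in the sentence immediately preceding the proposition ("the fiber-contraction on $T^*E$ ... induces a $\CC$-action on $\M_\Gamma$"), which the authors then choose to re-derive "in the Higgs bundle-theoretic way"; your write-up supplies the details of the lifting step that the paper leaves implicit, while the paper's version buys explicit formulas for the action on each Hitchin base. Two small points: for $m=4,6$ the Kleinian singularities of $(T^*E)/\Gamma$ lie over all points of $E$ with \emph{nontrivial stabilizer} in $\Gamma$, not only over the $\Gamma$-fixed points (e.g.\ for $m=4$ the two-torsion points not fixed by the full group contribute an $A_1$ point); and one should note that the unique lifts of the automorphisms $\llambda\cdot$ assemble into a holomorphic $\CC$-action (standard, e.g.\ because the minimal resolution can be constructed equivariantly by blowing up the fixed singular points). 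Neither affects the validity of your argument.
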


\begin{proof}
For $m=2$, see Section~\ref{sec:PVI}. 
In the case $m=3$ $(i.e., \widetilde{E}_6)$, the characteristic polynomial of $\theta$ reads as 
\[
 \zeta^3 + b \frac{(\d\! z)^{\otimes 3}}{z^2(z-1)^2} 
\]
for some $b\in\C$, see~\cite[Chapter~2]{Mihajlovic_thesis}. The action is 
% \begin{align*}
%   z & \mapsto z \\
%   \zeta & \mapsto \llambda \zeta \\
%   b & \mapsto \llambda^3 b . 
% \end{align*}
$$z  \mapsto z,\ \zeta \mapsto \llambda \zeta,\ b  \mapsto \llambda^3 b$$

A similar argument works in the other cases too: for $m=4$, the Hitchin base is 
\[
    H^0 (\C P^1, K^{\otimes 4} (3 \cdot \{ 0 \} + 3 \cdot \{ 1 \} + 2 \cdot \{ \infty \})) \iso \C, 
\]
and the equation of the spectral curve is 
\[
 \zeta^4 + b \frac{(\d\! z)^{\otimes 4}}{z^3(z-1)^3}=0. 
\]
The equivariant action is given by 
% \begin{align*}
%   z & \mapsto z \\
%   \zeta & \mapsto \llambda \zeta \\
%   b & \mapsto \llambda^4 b . 
% \end{align*}
$z  \mapsto z,\ \zeta \mapsto \llambda \zeta,\ b  \mapsto \llambda^4 b.$

Finally, for $m=6$, the Hitchin base is 
\[
    H^0 (\C P^1, K^{\otimes 6} (5 \cdot \{ 0 \} + 4 \cdot \{ 1 \} + 3 \cdot \{ \infty \})) \iso \C, 
\]
 the equation of the spectral curve is 
\[
 \zeta^6 + b \frac{(\d\! z)^{\otimes 6}}{z^5(z-1)^4}=0, 
\]
and the equivariant action is given by
$z  \mapsto z,\ \zeta \mapsto \llambda \zeta,\ b  \mapsto \llambda^6 b.$
%
% \begin{align*}
%   z & \mapsto z \\
%   \zeta & \mapsto \llambda \zeta \\
%   b & \mapsto \llambda^6 b . \qedhere
% \end{align*}
\end{proof}

The Floer-theoretic filtration $\FF$ with respect to this $\CC$-action on $\M_\Gamma$ is studied in \cite{RZ2}, where it is compared with the perverse filtration of Hitchin fibration
$\H_\Gamma: \M_\Gamma \fun \mathcal{B}_\Gamma$
(the famous ``$P=W$'' filtration of Higgs moduli spaces). In addition, there is an interesting correlation with the associated root system of graph $Q_\Gamma.$ Being an affine Dynkin graph, it has the \textit{imaginary root}, the positive $\Z$-generator of the 
radical of the symmetric bilinear form with associated Cartan matrix $C_{Q_\Gamma}.$
One can label the vertices $i\in Q_\Gamma^0$ by the corresponding
entries $n_i$ of the imaginary root, see
\cref{E_8_imaginary_root}.
%\cref{E_8_imaginary_root} for $\widetilde{E_8};$ the others can be found e.g. in \cite[Fig.1.3]{Kir16}.
 \begin{figure}[h]%
				\centering
				{
     \includegraphics[scale=0.13]{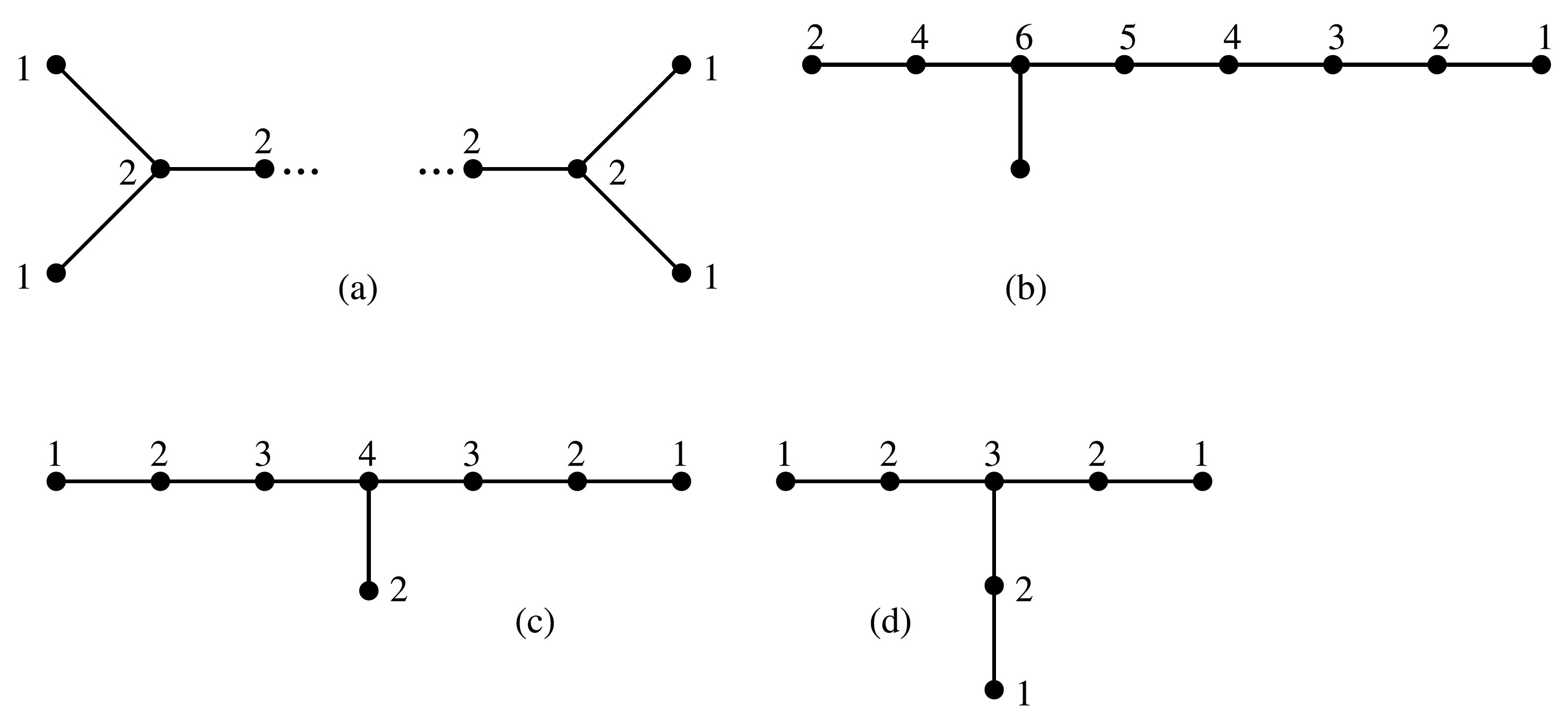}
					\caption{Imaginary root labellings for (a) $\widetilde{D}_n$ (total number of vertices is equal to $n+1$)
     (b) $\widetilde{E}_8$ (c) $\widetilde{E}_7$ (d) $\widetilde{E}_6$.}
					\label{E_8_imaginary_root}
				}
			\end{figure}

Given two filtrations $\{F_i\}_i$ and $\{G_j\}_j$ of a vector space $V,$ we say that $F$ \textbf{refines} $G$ if for each $j$ there is some $i$ such that $G_j=F_i$.

\begin{prop} \cite[Example~7.23]{RZ2} \label{Floer filtration on parabolic 2-dim equals imaginary root labelling}
     $\FF:=\FF_{\B}(H^2(\M_\Gamma))$ refines the perverse filtration of %Hitchin fibration  
     $h_\Gamma: \M_\Gamma\fun\mathcal{B}_\Gamma.$
     %\eqref{Hitchin_fibration_for_M_gamma}. 
     Moreover, its ranks 
    correlate with the labels of the imaginary root of the
    root system of $Q_\Gamma$:
     $$\rk\, \FF_{k/|\Gamma|} = \#\{i \in Q_\Gamma^0 \mid n_i\leq k\}$$
\end{prop}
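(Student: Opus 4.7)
The plan is to compute $\FF$ by combining the Morse-theoretic data at the $\CC$-fixed points with the combinatorial structure of the central fiber $\Core(\M_\Gamma) = \sum_i n_i E_i$, paralleling the analyses of Sections~\ref{sec:PVI} and~\ref{sec:E6}, and then to match the outcome against the imaginary root labels.

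\textbf{Step 1: fixed-point and weight analysis.} Since $\Omega_I$ has weight $1$ and $\mathcal{B}_\Gamma$ has weight $m:=|\Gamma|$, \cref{omega_C_pairing} forces the tangent weights of each $\CC$-fixed component to pair to $1$. Reasoning by induction on graph-distance from the central vertex of $Q_\Gamma$, I expect $F_\min$ to be the unique ``central'' component $E_0$ (with $n_0=m$), fixed as a whole $\CP^1$ with tangent decomposition $H_0\oplus H_1$, and each non-central $E_i$ to contain exactly one further isolated fixed point $F_i$. Propagating the fiber weight $1$ of $T^*E$ through the equivariant crepant resolution $\M_\Gamma\to (T^*E)/\Gamma$ along each arm of the tree, the outer positive tangent weights at the $F_i$ should satisfy $1/w_i = n_i/m$, so that the set of outer $S^1$-periods of $\Fi$ equals $\{n_i/m : i\in Q_\Gamma^0\}$.

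\textbf{Step 2: lower bound.} The Floer indices $\mu_{k/m+}(F_i)$ from \eqref{Floer_grading_of_F_alphas_in_HF*(H_lambda)} can be computed directly from this weight data, and each $F_i$ with $n_i\leq k$ contributes $+1$ to $\rk\,\FF_{k/m}(H^2(\M_\Gamma))$ in \cref{lower_bounds_filtration} via the class dual to $[E_i]$ in the Morse--Bott decomposition \eqref{EqnFrankelIntro}. \cref{unit entering for weight-1 and weight-2 SHS}(1) ensures that $1\in H^0$ only enters at $\lambda=2$, and \cref{filtration_stability_theorem} guarantees that jumps occur only at outer periods, so $\rk\,\FF_{k/m}(H^2)\geq \#\{i:n_i\leq k\}$.

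\textbf{Step 3: upper bound and refinement.} Matching upper bounds come from the Morse--Bott--Floer spectral sequence of \cref{Theorem spectral seq}. Each outer torsion manifold yields a column at period $n_i/m$; using \eqref{CZindecesOfMorseBottSumbanifolds} together with \cref{hypersurface_cohomology} applied to the intersection form of the core (whose one-dimensional kernel is spanned by $\sum n_i[E_i]$), one shows that the column at $\lambda=n_i/m$ kills precisely the ``new'' generator $[E_i]\in H^2(\M_\Gamma)$, forcing equality. The refinement of the $P=W$ filtration then follows because the perverse levels on $H^2$ of a Hitchin fibration over a one-dimensional base are spanned by subsets of $\{[E_i]\}$, whereas the Floer filtration stratifies those same generators more finely by the values of $n_i$.

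\textbf{Main obstacle.} The delicate point is Step~1: verifying that the McKay-type correspondence between component multiplicities $n_i$ and the tangent weights of the lifted $\CC$-action on $\M_\Gamma$ produces outer periods exactly $n_i/m$. This is immediate for $\widetilde{A}_0$ and $\widetilde{D}_4$ (Sections~\ref{sec:PVI},~\ref{sec:E6}), but for $\widetilde{E}_6,\widetilde{E}_7,\widetilde{E}_8$ one has to run a local computation at each branching vertex of the Dynkin tree, tracking how the weight-$1$ fiber coordinate of $T^*E$ is redistributed among the exceptional components of the ADE resolution.
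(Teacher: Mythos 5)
First, a point of comparison: the paper does not actually prove this proposition --- it is imported verbatim from \cite[Example~7.23]{RZ2}, with only the subsequent remark pinning down the precise values $\lambda=k/|\Gamma|$. So your proposal should be measured against the method of Section~\ref{sec:Floer} as instantiated for $\widetilde{D}_4$ in Section~\ref{sec:PVI}, and in that respect your architecture is the right one: determine the $\C^*$-fixed locus and tangent weights from the equivariant crepant resolution of $(T^*E)/\Gamma$, get lower bounds from \cref{lower_bounds_filtration} via the indices \eqref{Floer_grading_of_F_alphas_in_HF*(H_lambda)}, get matching upper bounds from \cref{Cor intro about Fmin surviving} and the spectral sequence of \cref{Theorem spectral seq}, and compare with the two-step perverse filtration on $H^2$. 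Your identification of $F_{\min}$ with the pointwise-fixed central curve (weights $H_0\oplus H_1$) is also correct.

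The genuine gap is the weight formula in Step~1, on which Steps~2 and~3 depend. The relation $1/w_i=n_i/m$ would assign outer tangent weight $w_i=m/n_i$ to a fixed point on each component $E_i$; for $\widetilde{E}_7$ ($m=4$) a multiplicity-$3$ component would need weight $4/3$, which is not an integer, so this cannot be the mechanism. The actual picture, obtained from the Hirzebruch--Jung resolutions of the $A_{k-1}$ singularities of $(T^*E)/\Gamma$ (one for each point of $E$ with stabilizer $\Z/k\leq\Gamma$): the isolated fixed points are the nodes of each exceptional chain, the node between the $j$-th and $(j+1)$-st curves has tangent weights $(-j,\,j+1)$ with \emph{both} directions tangent to the core, and only the terminal node of each chain carries an outer direction, of weight $k$ (the stabilizer order), not $m/n_i$. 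Consequently the outer $S^1$-periods below $1$ are the multiples of the primitive periods $1/k$ --- their union happens to equal $\{n_i/m\}$, but a generator associated to an intermediate component enters $\FF$ at a \emph{higher multiple} of some $1/k$, not at the reciprocal of a component-wise weight. Since the indices $\mu_\lambda(F_\alpha)$ in \cref{lower_bounds_filtration} must be computed from these actual integer weights $(-j,j+1)$, your Step~2 cannot be run on the data proposed in Step~1; and the fact that the correct weights reproduce exactly the count $\#\{i\mid n_i\leq k\}$ at $\lambda=k/m$ is a nontrivial bookkeeping check (e.g.\ for $\widetilde{E}_7$ at $\lambda=\tfrac12+\epsilon$ the contributing nodes are those with weights $(-3,4)$ and $(-1,2)$ but not $(-2,3)$, totalling $5=\#\{n_i\leq 2\}$). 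This is precisely the verification you defer in your ``main obstacle'' paragraph, so the proof as written is incomplete, and the specific claim you would need to verify there is false as stated.
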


\begin{remark}
    The actual result from \cite[Example~7.23]{RZ2} is slightly less informative -- it mentions the \textit{$k$-th step of the filtration} 
    instead of its precise $\lambda=k/|\Gamma|,$ but 
    the latter value is easy to conclude by looking at the spectral sequences, 
    which are used to compute filtrations $\FF$ in the first place.
\end{remark}

\subsection{Classification of 2-dimensional $\C^*$-equivariant Higgs moduli spaces}

\begin{prop}\label{prop:classification}
    Let $\mathcal{M}$ be a moduli space of Higgs bundles over some curve $X$. 
    Assume that
    \begin{enumerate}
        \item\label{item:1} $\mathcal{M}$ is of complex dimension $2$; 
        \item\label{item:2} there exists a smooth Hitchin map $h\colon \mathcal{M}\to \C$ turning $\mathcal{M}$ into a smooth elliptic fibration; 
        \item\label{item:3} $\C^*$ acts holomorphically on $\mathcal{M}$ and $h$ is equivariant for some contracting action on $\mathbb{C}.$
    \end{enumerate}
    Then, $\mathcal{M}$ is biregular to one of the spaces $\mathcal{M}^{PX}$ or $\M_\Gamma$ described in Sections~\ref{sec:PI}--\ref{sec:E6}. 
    Moreover, $h$ agrees with the corresponding Hitchin map of $\mathcal{M}^{PX}$ and $\M_\Gamma$ too. 
\end{prop}
\begin{proof}
    It follows from assumption~\eqref{item:2} that $\mathcal{M}$ is a Zariski open subset of the rational elliptic surface $\operatorname{Bl}_9 \C P^2$ ($\C P^2$ blown up $9$ times). 
    The Hitchin map is algebraic, therefore it extends to an elliptic fibration 
    \[
        \tilde{h}\colon \operatorname{Bl}_9 \C P^2 \to \C P^1. 
    \]
    From assumption~\eqref{item:3} and smoothness of $h$ we infer that $\tilde{h}$ may have at most two singular fibers, namely the fibers over $0,\infty \in \C P^1$. 

    If $h$ admits no singular fiber, then $\M$ is diffeomorphic to $T^* E$ for an elliptic curve $E$, i.e. $\M_\Gamma$ for $\Gamma = 0$. 
    
    Assume that $h$ admits exactly one singular fiber, and $\tilde{h}$ has exactly two singular fibers.  
    Then, from the structure of the monodromy of the Kodaira fibers, it is well-known that the only possibilities for couples of singular fibers over $\operatorname{Bl}_9 \C P^2$  are $I_0^* + I_0^*$, $\widetilde{E}_6 + IV$, $\widetilde{E}_7 + III$ or $\widetilde{E}_8 + II$. 
    
    The case $I_0^* + I_0^*$ is symmetric. 
    In the other cases, the fiber over $0$ can be either of the two singular fibers. 
    In Sections~\ref{sec:PI}--\ref{sec:E6} we have found all these possibilities. 
\end{proof}
\begin{remark}
    We would like to emphasize that there exist different moduli problems from the ones detailed here that have diffeomorphic moduli spaces. 
    Namely, in %~\cite[Section~3]{Boalch_Kac_Moody} 
    \cite[Section~11]{boalch2012simply}
    different "readings" of a quiver are introduced and studied.
    For instance, in \cite[Section~11.3]{boalch2012simply}, infinitely many Lax representations of $PIV$ are described. 
    It would be interesting to work out in detail the Hitchin map for such other spaces explicitly. 
    We leave this question for further study. 
    The main challenge is that the spectral correspondence produces many-sheeted covers of $\C P^1$, which then get birationally transformed to high degree (and thus high genus) curves in $\C P^2$, whose Jacobi varieties are correspondingly of high dimension. 
    Therefore, in general it is not clear to the authors how they give rise to elliptic surfaces.
    However, for the rank $3$ Lax pairs of the Painlev\'e equations,  in~\cite{ESz} it is proven that Fourier--Laplace transformation establishes symplectic isomorphisms with the Hitchin moduli spaces treated here. 
\end{remark}

\section{Three filtrations compared}\label{P=W_Floer_Multiplicity}

The imaginary root labels mentioned above have another geometric meaning.
Recall from ~\cref{prop:Zhang} that %by \cref{zhang2017multiplicativity} that %the central fiber  %of the Hitchin fibration 
$\Core(\M_{\Gamma})=h_{\Gamma}^{-1}(0)=\cup E_i$
has irreducible components $E_i$ labeled by vertices $i$ of $Q_\Gamma.$ 
Moreover, its canonical scheme structure comes with multiplicities $m_i$ of curves $E_i$
$$h_{\Gamma}^{-1}(0)= \sum_{i \in Q_\Gamma^0} m_i E_i$$
%attached to the curves $E_i.$ %that corresponds vertices 
An intrinsic definition of the $m_i$ is that the Weil divisor on the right-hand side is linearly equivalent to the generic fiber of $h_{\Gamma}$ (that is a smooth abelian variety). 
An interesting phenomenon %, pointed to the second author by Alexandre Minets, 
is that these coincide
\begin{equation}\label{imaginary root reads the multiplicities}
    m_i = n_i,
\end{equation}
%$$$$
with the labelings $n_i$ of the imaginary root of $Q_\Gamma.$ This is
%can see this as 
a consequence of 
\begin{enumerate}
    \item Self-intersection of $h_{\Gamma}^{-1}(0)$ (with multiplicities) in the sublattice spanned by $\Irr(\Core(\M_\Gamma))$ is equal to zero, as a fiber class.
    \item By definition, the imaginary root is the positive integer generator of the radical of the symmetric bilinear form with given Cartan matrix, which equals minus the intersection matrix of the elements $E_i$ of $\Irr(\Core(\M))$.
\end{enumerate} 

Hence, we can now rephrase 
\cref{Floer filtration on parabolic 2-dim equals imaginary root labelling} 
as 
\begin{equation}
\label{floer filter by irr comp}
    \rk\, \FF_{k/|\Gamma|} = \#\{E_i \in \Irr(\Core(\M_\Gamma)) \mid m_i\leq k\}
\end{equation}

This is an interesting phenomenon, motivating us to define another filtration.
Consider any Higgs moduli space $\M$ with a $I$-holomorphic $\CC$-action,
and its Hitchin fibration $\H:\M\fun \mathcal{B}\iso \C^{\frac{1}{2} \dim_\C \M}$. As above, its core together with its inherited scheme structure has irreducible components with multiplicities 
$$\Core(\M)=h^{-1}(0)=\sum_i m_i E_i,\ \ m:=\max_i\, m_i$$
%Denote $$ the biggest multiplicity of components.
%
\begin{de}
Define the \textbf{multiplicity filtration} on $H^{\dim_\C \M}(\M)$ to be
    $$\MM_{\lambda}:=\la [E_i] \mid m_i/m \leq \lambda \ra.$$ % \subset$$
%    $Y$ over a convex base whose \Core
Obviously, $\MM_1=H^{\dim_\C \M}(\M).$
\end{de}
Thus, \cref{Floer filtration on parabolic 2-dim equals imaginary root labelling} reads now as

\begin{corollary}
    For any 2-dimensional parabolic Higgs bundle $\M_\Gamma,$ we have
\begin{equation} \label{parabolic Higgs floer equals multiplicity}
    (\forall \lambda)\  \rk\, \FF_\lambda = \rk\, \MM_\lambda,
\end{equation}
\end{corollary}
\begin{proof}
    Notice by \eqref{imaginary root reads the multiplicities} that the maximal multiplicity $m$ of the irreducible core components 
    corresponds to the maximal labeled number of the associated imaginary root. 
    Looking at \cref{E_8_imaginary_root}, we see that this maximal root label is precisely $|\Gamma|$, as for
    $\widetilde{D}_4, \widetilde{E}_6, \widetilde{E}_7, \widetilde{E}_8$,
    the corresponding  groups $\Gamma$ are (as mentioned before) 
    $\Z/2, \Z/3, \Z/4, \Z/6,$ respectively. For  $\widetilde{A}_0,$ the multiplicity of the (only) component is $1,$ so again equal to $|\Gamma|$, as $\Gamma=0$ in that case.
    Thus, $m=|\Gamma|$, and by \eqref{floer filter by irr comp}
    $$\rk\, \FF_{k/\Gamma} = \#\{E_i \mid m_i\leq k\} = 
    \rk\,  \la [E_i] \mid m_i/|\Gamma|\leq k/|\Gamma| \ra = \la [E_i] \mid m_i/m\leq k/|\Gamma| \ra  = \rk\, \MM_{k/\Gamma},$$
    and both filtrations only jump in rank for $\lambda=k/\Gamma,$ hence \eqref{parabolic Higgs floer equals multiplicity} is proved.
\end{proof}
Following ideas from \cite{RZ1}, one expects that these filtrations actually coincide, not only rank-wise:

\begin{con} For any 2-dimensional parabolic Higgs bundle $\M_\Gamma,$ the Floer-theoretic and multiplicity filtration coincide:
$$\FF_\lambda =\MM_\lambda.$$
\end{con}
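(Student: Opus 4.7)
By the rank equality already established in \eqref{parabolic Higgs floer equals multiplicity}, the conjecture reduces to the one-sided inclusion $\MM_\lambda \subseteq \FF_\lambda$ for every $\lambda$. Since $\MM_\lambda$ is spanned by the classes $[E_i]$ with $m_i/m \leq \lambda$, the task becomes
\[
[E_i] \in \FF_{m_i/m} \qquad \text{for every irreducible component } E_i \text{ of } h_\Gamma^{-1}(0) .
\]

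First I would identify $[E_i]$ with an explicit summand in the Frankel decomposition of \cref{Morse_Bott_decomposition_theorem_for_moment_map}. For each non-central $E_i$ (i.e.\ with $m_i < m$), applying the Bialynicki--Birula decomposition to the projective core $\Core(\M_\Gamma)$ exhibits a unique $\CC$-fixed component $\F_{\alpha(i)} \subset E_i$ disjoint from $\F_{\min}$ whose $\CC$-unstable manifold inside the core has closure equal to $E_i$. Consequently $[E_i]$ corresponds, up to a nonzero scalar, to the generator of $H^0(\F_{\alpha(i)})[-2]$ under the Frankel isomorphism, with {\MB} index $\mu_{\alpha(i)}=2$. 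Next I would compute the $\CC$-weights at $\F_{\alpha(i)}$: the tangent space decomposes as $T_{\F_{\alpha(i)}} \M_\Gamma = H_{-m_i} \oplus H_{w_i^{\mathrm{out}}}$, where $H_{-m_i}$ is the flow direction inside $E_i$ toward the central component, and $H_{w_i^{\mathrm{out}}}$ is the outward Hitchin section direction. Equivariance of $\H_\Gamma$ together with the fact, verified case-by-case in \cref{sec:E6}, that $\mathcal{B}_\Gamma$ carries $\CC$-weight $m$, forces $w_i^{\mathrm{out}} = m/m_i$. Hence the outer $S^1$-period at $\F_{\alpha(i)}$ is exactly $m_i/m$.

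Combining this with the filtration stability \cref{filtration_stability_theorem} shows that $\FF$ can only jump at values from $\{m_i/m\}$, and the rank equality then forces $[E_i] \in \FF_{m_i/m}$ provided one checks that the class is annihilated by the appropriate differential. The main obstacle I foresee is the non-triviality of a specific Floer differential $\d_r$ in the spectral sequence of \cref{Theorem spectral seq}: one must verify that the column at outer period $m_i/m$ carries a class whose differential lands in the $H^0(\F_{\alpha(i)})[-2]$ summand of $E_1^{0,*}=H^*(\M_\Gamma)$, and simultaneously rule out that $[E_i]$ already lies in $\FF_{\lambda'}$ for some strictly smaller $\lambda'$. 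Both issues can be handled uniformly by combining the rank equality with the weight/period correspondence above and the shift computation of \eqref{CZindecesOfMorseBottSumbanifolds}: any class in $\FF_\lambda \setminus \FF_{<\lambda}$ must arise from precisely one outer-period column, and the natural candidate for $[E_i]$ is pinned down to the column with period $m_i/m$. The case $\widetilde{A}_0$ is trivial since $\Core(T^*E)=E$ is irreducible; the case $\widetilde{D}_4$ is already verified in \cref{sec:PVI}; the remaining cases $\widetilde{E}_{6,7,8}$ should be completely analogous, relying on the explicit geometry of $\widetilde{E}_6$ described in \cite{Mihajlovic_thesis} and its extensions for $\widetilde{E}_{7,8}$.
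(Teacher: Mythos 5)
Before anything else: the statement you are proving is presented in the paper as a \emph{conjecture}. The paper establishes only the rank equality \eqref{parabolic Higgs floer equals multiplicity} and explicitly leaves open whether the subspaces themselves coincide, so there is no proof of record to compare against; your argument has to stand on its own, and it does not.

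The concrete error is in your weight computation. You claim $T_{\F_{\alpha(i)}}\M_\Gamma = H_{-m_i}\oplus H_{m/m_i}$ with the second summand an ``outward Hitchin section direction'' of weight $m/m_i$. Since $\Omega_I$ has weight $1$ on $\M_\Gamma$, \cref{omega_C_pairing} forces the two weights at an isolated fixed point to sum to $1$, i.e.\ $m/m_i-m_i=1$; this already fails for a leaf of $\widetilde{E}_6$ ($m_i=1$, $m=3$), and $m/m_i$ is not even an integer for the label-$5$ and label-$4$ components of $\widetilde{E}_8$. A local toric computation on the $A_2$-chain resolving $\C^2/(\Z/3)$ gives weights $(-2,3)$ at the leaf endpoint and $(-1,2)$ at the interior node, not what your formula predicts. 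Worse, for the \emph{interior} components of the $A_k$-chains in the $\widetilde{E}_{6,7,8}$ cases the distinguished fixed point is a node of two exceptional curves, so both tangent directions lie inside the core and there is no outward direction at that point at all; the outer period $m_i/m$ cannot be read off the way you propose. (That your formula works for $\widetilde{D}_4$, where every non-central $m_i=1$ and $m=2$, is a coincidence.)

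The deeper gap is that even with the correct outer periods in hand, the rank equality does not ``force'' $[E_i]\in\FF_{m_i/m}$. Knowing $\rk\,\FF_{k/m}=\#\{i\colon m_i\le k\}$ only fixes dimensions; it is entirely consistent with $\FF_{k/m}$ being spanned by linear combinations of the $[E_j]$ involving components with $m_j>k$. To conclude, one must compute the image in $H^2(\M_\Gamma)$ of the actual Morse--Bott--Floer differential out of the column at period $k/m$ in \cref{Theorem spectral seq} and show it equals $\la [E_i]\colon m_i\le k\ra$; this is a count of Floer cylinders, not something that follows ``uniformly'' from the period bookkeeping and \eqref{CZindecesOfMorseBottSumbanifolds}. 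Your paragraph on ``the main obstacle'' restates exactly this open problem rather than resolving it, so the proposal does not constitute a proof of the conjecture.
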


%This holds true for $\Gamma = \Z/2$ by~\eqref{Filtration_Painleve_VI}. 
In summary, together with the results from \cref{sec:Painleve}, we have the following:

\begin{thm} Let us be given any Higgs moduli space $\M$ satisfying the conditions of Proposition~\ref{prop:classification}. 
Then, its multiplicity and Floer-theoretic filtration coincide rank-wise
   $$(\forall \lambda)\  \rk\, \FF_\lambda = \rk\, \MM_\lambda.$$
For moduli spaces of parabolic Higgs bundles, they both refine the {\PW} filtration, whereas 
%in the  case. In the  
for {\Pa} spaces, {\PW} refines the former two filtrations. 
Thus, in the intersection, i.e. for {\Pa} VI space, these three filtrations coincide. 
\end{thm}
\begin{proof}
    By Proposition~\ref{prop:classification}, we only need to deal with the cases covered in Sections~\ref{sec:PI}--\ref{sec:E6}. 
    By \eqref{parabolic Higgs floer equals multiplicity}, the multiplicity and Floer-theoretic filtration coincide rank-wise for parabolic spaces $\M_\Gamma.$ 
    For {\Pa}  spaces $\M^{PX},\ X\in \{I, II, IV\}$, both filtrations are trivial, so they coincide.
    The Floer-theoretic  filtration $\FF=\FF_\B(H^2(\M^{PX}))$ is trivial due to \eqref{filtration_panleve_I},\eqref{filtration_panleve_II},\eqref{filtration_panleve_IV}, respectively.
    The multiplicity filtration is trivial as the multiplicities of core components $E_i$ are all $m_i=1$ 
    in these cases. 
    The reasoning for this is similar to that for \eqref{imaginary root reads the multiplicities}, as the self-intersection of 
    $\sum_i E_i$ is zero. The latter has already been proved at the 
    ends of their respective \cref{sec:PI,sec Pa II,sec Pa IV}.
    
    For the parabolic moduli spaces $\M_\Gamma$, the multiplicity filtration $\MM$ refines the {\PW}, as for $\Gamma\neq 0,$ the latter has two 
    steps (see \cite[Proposition 5.4]{zhang2017multiplicativity}):
    \begin{equation}\label{P=W on H^2 for parabolic 2-dim Higgs}
        P_0=0\subset P_1= \la [E_i] \mid i \text{ not the central vertex of } Q_\Gamma \ra \subset 
P_2= H^2(\M_\Gamma),
    \end{equation}
    whereas the central vertex has the biggest multiplicity (see \cref{E_8_imaginary_root}), thus $P_1$ 
    %step 
    %in \eqref{P=W on H^2 for parabolic 2-dim Higgs} 
    is the penultimate filtered subspace of $\MM$. For $\Gamma=0,$ i.e. $\M=T^*E$, 
    %situation is even simpler, 
    both filtrations are trivial. 

    For {\Pa} I, II, and IV spaces, $\MM$ and $\FF$ are both trivial, hence {\PW} refines them. This is a strict refinement for {\Pa} II and IV, as there {\PW} has two steps, see \cite[Table 2.]{Sz_PW}.
    
    Finally, for 
    {\Pa} VI space , i.e. $\M_{\Z/2},$ {\PW} filtration, on \eqref{P=W on H^2 for parabolic 2-dim Higgs}, coincides with
    $\FF$, on \eqref{Filtration_Painleve_VI}, and $\MM$, as multiplicities are 2 for the central vertex and 1 for the others, see \cref{E_8_imaginary_root}(a).
\end{proof}

Thus, the last theorem compares the three filtrations for all 2-dimensional Higgs moduli
with $\CC$-actions, i.e. for which filtrations $\FF$ and $\MM$ make sense.
An interesting question is what happens in higher-dimensional examples? Due to \cite{groechenig2014hilbert}, we know that the Hilbert schemes of parabolic 2-dimensional moduli, 
$\Hilb^n(\M_\Gamma)$, are parabolic Higgs moduli spaces as well.
It would be interesting to understand the interplay between  {\PW}, Floer-theoretic, 
and multiplicity filtration, for these spaces.
\bibliography{FZ}
\bibliographystyle{amsalpha}
\end{document}